\documentclass[11pt, oneside]{article}   	
\usepackage{geometry}                		
\geometry{a4paper}                   		
\usepackage[dvipdfmx]{graphicx}
\usepackage[usenames]{xcolor}		
\usepackage{amsmath,amsthm,amssymb}
\usepackage{amscd}
\usepackage[all]{xy}

\theoremstyle{definition}
\newtheorem{dfn}{Definition}[section]
\newtheorem{rem}[dfn]{Remark}

\theoremstyle{plain}
\newtheorem{thm}[dfn]{Theorem}
\newtheorem{prop}[dfn]{Proposition}
\newtheorem{lem}[dfn]{Lemma}

\newcommand{\K}{\mathbb{K}}
\newcommand{\Z}{\mathbb{Z}}

\newcommand{\der}{\mathrm{Der}}
\newcommand{\aut}{\mathrm{Aut}}
\newcommand{\tder}{\mathrm{tder}}

\newcommand{\taut}{\mathrm{TAut}}

\renewcommand{\div}{\mathrm{div}}

\newcommand{\KV}{\operatorname{KV}}

\newcommand{\pa}{\partial}

\newcommand{\gr}{\mathrm{gr}}
\newcommand{\ad}{\mathrm{ad}}

\newcommand{\rot}{\mathrm{rot}}

\newcommand{\gfg}{\mathfrak{g}}
\newcommand{\omegaz}{{\omega_0}}
\newcommand{\ssym}{\mathrm{sym}}
\newcommand{\sym}{\mathrm{Sym}}

\title{Goldman-Turaev formality implies Kashiwara-Vergne}
\author{Anton Alekseev\thanks{Department of Mathematics, University of Geneva, 2-4 rue du Lievre, 1211 Geneva, Switzerland \texttt{e-mail:anton.alekseev@unige.ch}}, Nariya Kawazumi\thanks{Department of Mathematical Sciences, University of Tokyo, 3-8-1 Komaba, Meguro-ku, Tokyo 153-8914, Japan \texttt{e-mail:kawazumi@ms.u-tokyo.ac.jp}},
Yusuke Kuno\thanks{Department of Mathematics, Tsuda University, 2-1-1 Tsuda-machi, Kodaira-shi, Tokyo 187-8577, Japan \texttt{e-mail:kunotti@tsuda.ac.jp}} and Florian Naef\thanks{Department of Mathematics, Massachusetts Institute of Technology, 182 Memorial Dr, Cambridge, MA 02142, USA \texttt{e-mail:naeffl@mit.edu}}}

\date{}							

\begin{document}

\maketitle

\begin{abstract}
Let $\Sigma$ be a compact connected oriented 2-dimensional manifold with non-empty boundary.
In our previous work \cite{highergenus}, we have shown that the solution of generalized (higher genus) Kashiwara-Vergne equations for an automorphism $F \in \aut(L)$ of a free Lie algebra implies an isomorphism between the Goldman-Turaev Lie bialgebra $\mathfrak{g}(\Sigma)$ and its associated graded ${\rm gr}\, \mathfrak{g}(\Sigma)$. In this paper, we prove the converse: if $F$ induces an isomorphism $\mathfrak{g}(\Sigma) \cong {\rm gr} \, \mathfrak{g}(\Sigma)$, then it satisfies the Kashiwara-Vergne equations up to conjugation.
As an application of our results, we compute the degree one non-commutative Poisson cohomology of the Kirillov-Kostant-Souriau  double bracket.
The main technical tool used in the paper is a novel characterization of conjugacy classes in the free Lie algebra in terms of cyclic words.
\end{abstract}


\section{Introduction}
\label{sec:intro}

Let $\Sigma$ be a compact connected oriented surface with non-empty boundary and $\K$ a field of characteristic zero. The $\K$-linear space $\mathfrak{g}=\mathfrak{g}(\Sigma)$ spanned by free homotopy classes of loops in $\Sigma$ has an interesting Lie bialgebra structure, the Lie bracket being the Goldman bracket \cite{Go86} and the Lie cobracket being the Turaev cobracket \cite{Tu91}. (To be more precise, one needs to fix a framing on $\Sigma$ in order to define the Lie cobracket on $\mathfrak{g}$ and it actually depends on the choice of framing.) As was shown in \cite{highergenus}, one can naturally define the graded version $\gr\, \mathfrak{g}$ of the Goldman-Turaev Lie bialgebra, and it turns out to be isomorphic to the necklace Lie bialgebra structure \cite{BB02,Ginz01,Schedler} associated to a certain quiver determined by the topological type of $\Sigma$.

The Lie bialgebras $\mathfrak{g}$ and ${\rm gr}\, \mathfrak{g}$ admit natural completions which we denote by $\widehat{\mathfrak{g}}$ and ${\rm gr}\, \widehat{\mathfrak{g}}$, respectively.
They are isomorphic as filtered $\K$-vector spaces, but not canonically.
The formality question for the Goldman-Turaev Lie bialgebras is whether there exists a filtered Lie bialgebra isomorphism from $\widehat{\mathfrak{g}}$ to ${\rm \gr}\, \widehat{\mathfrak{g}}$ such that the associated graded map is the identity on ${\rm \gr}\, \widehat{\mathfrak{g}}$.
This question has been studied during the last several years by various approaches: the study first began with formality for Goldman brackets \cite{KK14,KK15,MT13,MTpre,Naef,Hain17}  and then has been deepened to formality for Turaev cobrackets \cite{Mas15,genus0,AN17,highergenus,Hain18}.
One motivation for considering this question comes from the study of the Johnson homomorphisms of mapping class groups \cite{KK16}.

In this paper, we impose a restriction on a map $\widehat{\mathfrak{g}} \to {\rm gr}\, \widehat{\mathfrak{g}}$ by assuming that it is induced by a group-like expansion \cite{Mas12}, which is a notion related to 1-formality of a free group of finite rank.
In order to explain this notion, for the moment we assume that the boundary of $\Sigma$ is connected.
(The general case needs a more careful treatment and will be explained in Section \ref{sec:results}.)
The group algebra $\K\pi$ of the fundamental group $\pi=\pi_1(\Sigma)$ has a decreasing filtration defined by powers of the augmentation ideal.
This defines a completion $\widehat{\K \pi}$, and the associated graded ${\rm gr}\, \widehat{\K \pi}$.
Since the fundamental group $\pi$ is a free group of finite rank, ${\rm gr}\, \widehat{\K \pi}$ is canonically isomorphic to the completed tensor algebra $\widehat{T}(H)$ generated by the first homology $H=H_1(\pi,\K)\cong H_1(\Sigma,\K)$.
Furthermore, we can identify $\widehat{\K \pi}$ with ${\rm gr}\, \widehat{\K \pi}$ in a non-canonical way.
In this context, a group-like expansion is a complete Hopf algebra isomorphism
\[
\theta\colon \widehat{\K \pi} \to \widehat{T}(H) = {\rm gr}\, \widehat{\K \pi}
\]
such that ${\rm gr}\, \theta = {\rm id}$.
Any group-like expansion induces a filtered $\K$-linear isomorphism $\theta\colon \widehat{\mathfrak{g}} \to {\rm gr}\, \widehat{\mathfrak{g}}$.
This follows from the fact that there is a natural identification
\[
\mathfrak{g} = \K (\pi/{\rm conj}) \cong |\K\pi|:= \K \pi/[\K \pi, \K \pi],
\]
where $\pi/{\rm conj}$ is the set of conjugacy classes in $\pi$ and $[\K \pi, \K \pi]$ is the $\K$-linear span of elements of the form $ab-ba$ with $a,b\in \K \pi$.
Our goal is to characterize group-like expansions which induce Lie bialgebra isomorphisms $\theta\colon \widehat{\mathfrak{g}} \to {\rm gr}\, \widehat{\mathfrak{g}}$.

As was shown by Kawazumi-Kuno \cite{KK14}\cite{KK16} and Massuyeau-Turaev \cite{MT13} \cite{MTpre} independently, if $\theta$ satisfies the boundary condition
\[
\theta(\zeta) = e^{\omega},
\]
where $\zeta \in \pi$ is the boundary loop of $\Sigma$ and $\omega \in \wedge^2 H \subset \widehat{T}(H)$ is the 2-tensor corresponding to the intersection pairing on $\Sigma$, then the induced map $\theta\colon \widehat{\mathfrak{g}} \to {\rm gr}\, \widehat{\mathfrak{g}}$ is a Lie algebra isomorphism.
Group-like expansions satisfying $\theta(\zeta) = e^{\omega}$ are called symplectic expansions (in the case where the boundary of $\Sigma$ is connected).
In this paper, conversely we prove the following theorem:
\begin{thm}[For the general case, see Theorem \ref{thm:main1}]
\label{thm:intromain}
Assume that the boundary of $\Sigma$ is connected.
If a group-like expansion $\theta$ induces a Lie algebra isomorphism $\theta\colon \widehat{\mathfrak{g}} \to {\rm gr}\, \widehat{\mathfrak{g}}$, then $\theta$ is conjugate to a symplectic expansion, i.e., there exists a group-like element $g$ such that
$$
\theta(\zeta)=g e^{\omega} g^{-1}.
$$
\end{thm}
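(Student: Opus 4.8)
The plan is to reduce the statement to an equality of cyclic words, $\theta(|\zeta|) = |e^\omega|$ in $\gr\,\widehat{\mathfrak{g}}$, and then to upgrade this equality to the desired conjugacy $\theta(\zeta) = g e^\omega g^{-1}$ by means of the characterization of conjugacy classes in the free Lie algebra advertised in the abstract. Writing $\theta(|\zeta|) = |\theta(\zeta)|$ and recalling that the cyclic word $|X|$ of a group-like element $X$ is invariant under conjugation, I expect this characterization to take the form: the assignment $[\log X]\mapsto |X|$ is injective on conjugacy classes of group-like elements sharing a fixed nonzero leading symbol. Granting this, and knowing that $e^\omega$ is the value at the boundary of any symplectic expansion, the identity $|\theta(\zeta)| = |e^\omega|$ will force $\log\theta(\zeta)$ and $\omega$ into the same $\mathrm{Ad}$-orbit, which is precisely the assertion that $\theta(\zeta)$ is conjugate to $e^\omega$.

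The first substantial step is to extract the consequences of the hypothesis that $\theta$ is a Goldman Lie algebra isomorphism. Since the boundary loop $\zeta$ can be isotoped off any interior loop, its class $|\zeta|$ is central in the Goldman Lie algebra $\widehat{\mathfrak{g}}$; as $\theta$ is a Lie algebra isomorphism, $\theta(|\zeta|)$ is then central in $\gr\,\widehat{\mathfrak{g}}$, that is, in the necklace Lie algebra attached to the $\KKS$ double bracket. Fixing once and for all a reference symplectic expansion $\theta_0$ (which exists and induces a Lie algebra isomorphism by the results recalled above), the same reasoning applied to $\theta_0$ shows that $|e^\omega| = \theta_0(|\zeta|)$ is central as well. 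Thus both $\theta(|\zeta|)$ and $|e^\omega|$ lie in the center $Z$ of $\gr\,\widehat{\mathfrak{g}}$, and since $\gr\,\theta = \gr\,\theta_0 = \mathrm{id}$ they have the same leading symbol (the class $\tfrac{1}{2}|\omega^2|$, in filtration degree $4$).

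The crux is then to show that these properties single out $|e^\omega|$, i.e. that $\theta(|\zeta|) = |e^\omega|$. Centrality by itself is not sufficient: one must use that $\theta(|\zeta|) = |\theta(\zeta)|$ is the cyclic word of a genuine group-like element whose logarithm is a Lie series with leading term $\omega$. I would proceed in two moves: first, compute the center $Z$ explicitly, which is exactly the degree-zero non-commutative Poisson cohomology $H^0$ of the $\KKS$ double bracket, expecting $Z$ to be spanned by the homogeneous components of the one-parameter family $|e^{s\omega}|$ (the ``functions of $\omega$''); and second, determine which elements of $Z$ arise as cyclic words of group-like elements with the prescribed symbol. Among the central functions of $\omega$, group-likeness together with the normalization $\gr\,\theta = \mathrm{id}$ should fix the relevant one-parameter coordinate to its boundary value, leaving only $|e^\omega|$.

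The main obstacle is precisely this last pinning-down. Centrality and the leading-order normalization constrain $\theta(|\zeta|)$ only to be a central series of the shape $1 + \tfrac{1}{2}|\omega^2| + \sum_{k\ge 3} c_k |\omega^k|$, and ruling out the higher ambiguities $c_k \neq 1/k!$ cannot be achieved by degree-counting alone. This is where the group-like nature of $\theta(\zeta)$ and the cyclic-word characterization of conjugacy classes must be brought together: the characterization is exactly what converts ``central, group-like, and of the correct symbol'' into ``conjugate to $e^\omega$'', and hence into the single cyclic word $|e^\omega|$. I therefore expect the technical heart of the proof to be the characterization of conjugacy classes in terms of cyclic words, combined with the computation of the center $Z$; once these are in hand, the assembly outlined above yields the group-like element $g$ with $\theta(\zeta) = g e^\omega g^{-1}$.
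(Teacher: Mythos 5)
Your overall architecture matches the paper's: reduce to centrality of $|\theta(\zeta)|$ in $({\rm gr}\,\widehat{\mathfrak{g}},[\cdot,\cdot]_{\rm gr})$, compute the center (Theorem \ref{thm:center}), pin down $|\theta(\zeta)|=|e^\omega|$, and then invoke the conjugacy characterization (Theorem \ref{thm:sympl}) to produce $g$. The gap is in the pinning-down step, and your proposed way around it is circular. You correctly observe that centrality plus ${\rm gr}\,\theta={\rm id}$ only constrains $|\theta(\zeta)|$ to be a series $\sum_k c_k|\omega^k|$ with the correct low-order coefficients, and that naive degree counting cannot fix the higher $c_k$. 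But your fix --- ``the characterization is exactly what converts central, group-like, and of the correct symbol into conjugate to $e^\omega$'' --- asks Theorem \ref{thm:sympl} to do something it does not do: its hypothesis is precisely the equality $|\exp(a)|=|\exp(\omega)|$ that you are trying to establish, not centrality. As you yourself formulated it in your first paragraph (injectivity of $[\log X]\mapsto |X|$ on conjugacy classes), the characterization can only be applied \emph{after} $|\theta(\zeta)|=|e^\omega|$ is known.

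The missing ingredient is the PBW-type decomposition of the space of cyclic words, $|\widehat{T}|=\prod_{m}|\sym^m\widehat{L}|$ (Theorem \ref{thm:assert}), together with its corollary Theorem \ref{thm:dec2}: if $|\exp(a)|\in|\K[[\omega]]|$ then $|a^m|\in\K|\omega^m|$ for every $m$ \emph{separately}. Since $\K|\omega^m|$ is concentrated in weight $2m$ while $|a^m|=|\omega^m|+(\text{terms of weight}>2m)$, comparing weight-$2m$ parts forces $|a^m|=|\omega^m|$ exactly, hence $|\exp(a)|=|\exp(\omega)|$ (this is Proposition \ref{prop:411}). Note that this is needed already at the lowest nontrivial order: centrality in weight $4$ only gives $|a_4|+\tfrac12|\omega^2|\in\K|\omega^2|$ for the weight-$4$ component $a_4$ of $\log\theta(\zeta)$, and it is the linear independence of $|\sym^1\widehat{L}|$ and $|\sym^2\widehat{L}|$ inside $|\widehat{T}|$ that kills $|a_4|$. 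So the degree counting you dismissed does in fact suffice, but only after the symmetric-power decomposition has split the equation into homogeneous pieces; without identifying that decomposition your argument does not close.
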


In our previous work \cite{genus0, highergenus}, the formality question for the Lie bialgebra $\mathfrak{g}$ has been studied in connection with the Kashiwara-Vergne problem from Lie theory \cite{KV78, AT12} and its generalization to surfaces of positive genus.
In this approach, one fixes generators of the group $\pi$ and decomposes any group-like expansion as follows:
\[
\theta_F = F^{-1} \circ \theta_{\exp}.
\]
Here, $\theta_{\exp}$ is the group-like expansion determined by the choice of generators of $\pi$ and $F$ is a complete Hopf algebra automorphism of $\widehat{T}(H)$; in other words, $F\in \aut(\widehat{L})$ where $\widehat{L} \subset \widehat{T}(H)$ is the completed free Lie algebra generated by $H$.
In this way, the properties of $\theta$ are encoded in the properties of $F$.
In the formulation of the generalized Kashiwara-Vergne problem in \cite{highergenus}, there are two equations (KV I) and (KV II) for the automorphism $F$.
The equation (KV I) is equivalent to $\theta_F(\zeta) = e^\omega$.
The second equation (KV II) depends on the choice of framing on $\Sigma$ and is related to the formality of the Turaev cobracket.
Recall the following result from \cite{highergenus}:

\begin{thm}[\cite{highergenus}, Theorem 5.12]
\label{thm:introKV}
Suppose that $F \in \aut(\widehat{L})$ satisfies {\rm (KV I)}.
Then, $\theta_F = F^{-1}\circ \theta_{\exp}$ induces a Lie bialgebra isomorphism $\theta_F \colon \widehat{\mathfrak{g}} \to {\rm gr}\, \widehat{\mathfrak{g}}$ if and only if $F$ satisfies {\rm (KV II)}.
\end{thm}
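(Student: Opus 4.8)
The plan is to reduce everything to the cobracket. Since $F$ satisfies (KV I), we have the boundary condition $\theta_F(\zeta)=e^\omega$, and the Kawazumi--Kuno / Massuyeau--Turaev theorem recalled above already guarantees that $\theta_F$ is an isomorphism of Lie algebras for the Goldman bracket. So the entire content lies in the Turaev cobracket: writing $\delta$ for the framed Turaev cobracket on $\widehat{\mathfrak{g}}$ and $\delta^{\gr}$ for the necklace cobracket on $\gr\,\widehat{\mathfrak{g}}$, I would study the defect
$$
\Delta \;:=\; (\theta_F\otimes\theta_F)\circ\delta \;-\; \delta^{\gr}\circ\theta_F,
$$
and prove $\Delta=0$ if and only if (KV II) holds. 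Combined with the Goldman statement, $\Delta=0$ then yields the Lie bialgebra isomorphism, and conversely a bialgebra isomorphism forces $\Delta=0$.

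First I would recall the description of the Turaev cobracket through the homotopy intersection form, equivalently through the Kirillov--Kostant--Souriau double bracket $\{\!\{-,-\}\!\}$: up to cyclic symmetrization, $\delta(|a|)$ is assembled from the self-intersections of a representative of $a$, which are governed by the self double bracket $\{\!\{a,a\}\!\}$, together with a ``diagonal'' contribution attached to each self-crossing and to the chosen framing $f$. Transporting this formula through $\theta_F$ and using that under (KV I) the map $\theta_F$ respects the double bracket (equivalently, the homotopy intersection form), the double-bracket part of $\Delta$ cancels identically. This reduces $\Delta$ to the transported \emph{self-intersection anomaly}.

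The key step is to identify this anomaly with the noncommutative divergence. I expect a formula of the shape
$$
\Delta \;=\; \partial\bigl(\tdiv F - j_f\bigr),
$$
where $\tdiv F\in|\widehat{T}(H)|=\gr\,\widehat{\mathfrak{g}}$ is the divergence cocycle of the tangential automorphism $F$, the term $j_f$ is the explicit framing contribution (a trace of a one-variable function attached to $f$), and $\partial$ is the natural operator built from the necklace cobracket---concretely a coboundary in the noncommutative Poisson complex of the KKS double bracket. Granting this, $\Delta=0$ becomes the linear equation $\partial(\tdiv F - j_f)=0$, and unwinding $\partial$ turns it into precisely the divergence equation (KV II); conversely (KV II) gives $\Delta=0$. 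The last step requires knowing the kernel of $\partial$ in the relevant degree, so that ``$\Delta=0$'' is not weaker than (KV II)---this is where control of the degree-one noncommutative Poisson cohomology of the double bracket enters.

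The main obstacle is the explicit identification $\Delta=\partial(\tdiv F - j_f)$: one must follow how $\theta_F=F^{-1}\circ\theta_{\exp}$ deforms each self-intersection point and show that the resulting local correction assembles, after cyclic symmetrization, exactly into $\tdiv F$ together with the framing term $j_f$. This is the heart of the argument and the place where the tangential structure of $\log F$ and a careful bookkeeping of signs and framings are indispensable; establishing that the divergence is the sole obstruction to formality of the cobracket is what makes the divergence equation (KV II) the correct and complete condition.
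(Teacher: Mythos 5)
This statement is not proved in the present paper at all: it is quoted verbatim from \cite{highergenus} (Theorem 5.12 there), and the authors only \emph{recall} it here (and restate it as Theorem \ref{thm:KVII}) in order to combine it with Theorem \ref{thm:main1}. So there is no in-paper proof to compare against; the relevant argument lives in the earlier work. Your outline does match the strategy of that cited proof: under (KV I) the Goldman-bracket part is already handled by Theorem \ref{thm:KKMT}, the whole content sits in the Turaev cobracket, and the obstruction to cobracket formality is measured by a divergence-type group cocycle on tangential automorphisms, with (KV II) being exactly the equation that kills it.

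As a proof, however, your proposal has a genuine gap precisely where you acknowledge one: the identity $\Delta=\partial\bigl(\tdiv F - j_f\bigr)$ is only ``expected,'' and establishing it is the entire technical content of the theorem. In \cite{highergenus} this is done by expressing the framed cobracket through an algebraic self-intersection operation (built from the homotopy intersection form / double bracket) and computing how conjugating by $F$ shifts it by the non-commutative divergence; none of that computation appears in your sketch. Two further points are left unpinned. First, the right-hand side of (KV II) is not just ``$j_f$'': it contains the specific constants ${\bf r}$ and $|p|$ (coming from $\theta_{\exp}$ and the rotation numbers of the generators) and the free functions $|h_j(z_j)|$, $|h(\xi)|$; your argument must produce exactly these terms, and the $h_j,h$ ambiguity must be shown to be precisely the kernel of the operator you call $\partial$ on the relevant subspace --- otherwise ``$\Delta=0$'' could be strictly weaker or stronger than (KV II). Second, you assert that $\theta_F$ ``respects the double bracket'' under (KV I); this is essentially the Massuyeau--Turaev/Kawazumi--Kuno statement for the homotopy intersection form and should be invoked explicitly rather than folded silently into the cancellation. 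So the route is the right one, but the proposal is a roadmap of the cited proof rather than a proof.
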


Since the generalized (higher genus) Kashiwara-Vergne problem admits solutions (with the exception of certain framings
on genus one surfaces) \cite[\S 6]{highergenus}, the existence of the Goldman-Turaev formality isomorphism $\widehat{\mathfrak{g}} \to {\rm gr}\, \widehat{\mathfrak{g}}$ has been settled.

In this paper, we introduce a modification of equations (KV I) and (KV II), which we denote by (KV I') and (KV II').
For an explicit form of these two equations, see Theorem \ref{thm:mainGTformal}.
As an application of Theorem \ref{thm:intromain}, we prove the following result:

\begin{thm}[$=$Theorem \ref{thm:mainGTformal}]
\label{thm:introKVmod}
Let $\theta_F=F^{-1}\circ \theta_{\exp}$ be a group-like expansion.
Then, $\theta_F$ induces a Lie bialgebra isomorphism $\theta_F\colon \widehat{\mathfrak{g}} \to {\rm gr}\, \widehat{\mathfrak{g}}$ if and only if $F$ satisfies equations {\rm (KV I')} and {\rm (KV II')}.
\end{thm}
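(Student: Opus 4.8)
The plan is to reduce Theorem~\ref{thm:introKVmod} to the already-established equivalence of Theorem~\ref{thm:introKV} by exploiting the freedom to conjugate the boundary condition. The key observation is that the induced map on the Goldman--Turaev side depends on the expansion only up to post-composition with an inner automorphism of the target. Concretely, write $|\cdot|$ for the projection onto cyclic words and let $c_g(x) = g x g^{-1}$ denote conjugation by a group-like element $g \in \widehat{T}(H)$; since $|g x g^{-1}| = |x|$ for all $x$, the two expansions $\theta_F$ and $c_g \circ \theta_F$ induce literally the \emph{same} $\K$-linear map $\widehat{\mathfrak{g}} = |\widehat{\K\pi}| \to |\widehat{T}(H)| = \gr\,\widehat{\mathfrak{g}}$. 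In particular $\theta_F$ is a Lie bialgebra isomorphism if and only if $c_g \circ \theta_F$ is, and no separate compatibility with the cobracket must be checked because the two maps coincide. First I would record this as a lemma, together with the fact that $c_g = \exp(\ad_{\log g})$ lies in $\aut(\widehat{L})$ and acts as the identity on the associated graded (as $\log g \in \widehat{L}$ has lowest degree $\geq 1$, so $\ad_{\log g}$ strictly raises degree), so that $c_g \circ \theta_F$ is again a group-like expansion.

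Next I would unwind the definitions so that (KV I') and (KV II') are exactly the images of (KV I) and (KV II) under this conjugation. Given $\theta_F(\zeta) = g e^{\omega} g^{-1}$, set $\theta' := c_{g^{-1}} \circ \theta_F$; then $\theta'(\zeta) = e^{\omega}$, and writing $\theta' = (F')^{-1} \circ \theta_{\exp}$ one computes $F' = F \circ c_g \in \aut(\widehat{L})$. The content of (KV I') is the existence of a group-like $g$ with $\theta_F(\zeta) = g e^{\omega} g^{-1}$ (the analogue in the general-boundary case being read off from Theorem~\ref{thm:main1}), and (KV II') is precisely the condition that the resulting $F'$ satisfies (KV II). Thus the passage $F \mapsto F'$ carries the pair $\{(\text{KV I'}),(\text{KV II'})\}$ to $\{(\text{KV I}),(\text{KV II})\}$.

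For the implication ($\Leftarrow$), suppose $F$ satisfies (KV I') and (KV II'). By (KV I') choose $g$ with $\theta_F(\zeta) = g e^{\omega} g^{-1}$ and form $F' = F \circ c_g$, which then satisfies (KV I); by (KV II') it also satisfies (KV II). Theorem~\ref{thm:introKV} gives that $\theta_{F'} = c_{g^{-1}} \circ \theta_F$ induces a Lie bialgebra isomorphism, and by the lemma of the first paragraph $\theta_F$ induces the same map, hence is itself a Lie bialgebra isomorphism.

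For the implication ($\Rightarrow$), assume $\theta_F$ induces a Lie bialgebra isomorphism; in particular it is a Lie algebra isomorphism. Here I would invoke the main theorem of the paper, Theorem~\ref{thm:main1} (the general-boundary form of Theorem~\ref{thm:intromain}), to conclude that $\theta_F$ is conjugate to a symplectic expansion, i.e.\ $\theta_F(\zeta) = g e^{\omega} g^{-1}$ for some group-like $g$; this is exactly (KV I'). Setting $F' = F \circ c_g$ as before, $\theta_{F'}$ satisfies the boundary condition and, by the lemma, still induces a Lie bialgebra isomorphism, so Theorem~\ref{thm:introKV} forces $F'$ to satisfy (KV II), which is (KV II'). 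The main obstacle I anticipate is the bookkeeping in the disconnected-boundary case: one must make sure that the conjugating data supplied by Theorem~\ref{thm:main1} matches the freedom that (KV I') builds in, and that $F \mapsto F \circ c_g$ interacts correctly with the framing-dependent second equation. Granting the precise forms of (KV I') and (KV II') from Theorem~\ref{thm:mainGTformal}, all of this reduces to the single verification that conjugation acts trivially on cyclic words, which is the lemma above.
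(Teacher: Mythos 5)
Your overall strategy is the same as the paper's: use Theorem \ref{thm:main1} to extract the conjugating element, pass to $F' = F\circ c_{g}$ (equivalently $F_{-\ell_0}\circ F$ with $\ell_0=\log F(g_0)$, which is the same automorphism), observe that inner automorphisms act trivially on $|A|$ so that $\theta_{F'}$ and $\theta_F$ induce the same map on $\widehat{\mathfrak{g}}$, and then apply Theorem \ref{thm:introKV} to $F'$. The translation of the boundary condition into (KV I') is also handled correctly.

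However, there is a genuine gap at the step where you assert that ``(KV II') is precisely the condition that the resulting $F'$ satisfies (KV II)'' and that everything ``reduces to the single verification that conjugation acts trivially on cyclic words.'' That verification shows that $\theta_{F'}=\theta_F$ as maps $\widehat{\mathfrak{g}}\to\gr\,\widehat{\mathfrak{g}}$, but it does \emph{not} show that $j_f(F')$ and $j_f(F)$ differ by exactly the correction term $\rot_f(|\gamma_0|)\,|\ell_0|$ appearing in (KV II'). The cocycle $j_f$ does not vanish on inner automorphisms even though they act trivially on $|A|$: one must actually compute, for the inner derivation $u_\ell(a)=[a,\ell]$, that $\div(u_\ell)=(1-2g-n)\,|\ell|$ and $c_f(u_\ell)=\bigl(\sum_{j=1}^n\rot_f(|\gamma_j|)\bigr)|\ell|$, and then invoke the Poincar\'e--Hopf identity $\sum_{j=1}^n\rot_f(|\gamma_j|)-\rot_f(|\gamma_0|)=1-2g-n$ to conclude $j_f(F_\ell)=-\rot_f(|\gamma_0|)\,|\ell|$. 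Only with this computation, together with the cocycle property $j_f(F_{-\ell_0}\circ F)=j_f(F_{-\ell_0})+F_{-\ell_0}\cdot j_f(F)$ and the triviality of the $F_{-\ell_0}$-action on $|A|$, does the explicit form of (KV II') match (KV II) for $F'$. This divergence computation is the main quantitative content of the paper's proof, and your proposal omits it entirely; without it the specific coefficient $\rot_f(|\gamma_0|)$ in (KV II') is unexplained.
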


This result is an improvement on Theorem \ref{thm:introKV},  and it gives a complete algebraic characterization of group-like expansions which induce Lie bialgebra isomorphisms $\theta\colon \widehat{\mathfrak{g}} \to {\rm gr}\, \widehat{\mathfrak{g}}$.

The results described above are based on a novel characterization of conjugacy classes in the free Lie algebra. In more detail, let $H$ be a symplectic vector space, $\widehat{L}$ the (degree completed) free Lie algebra generated by $H$, 
$\omega \in \wedge^2 H \subset \widehat{L}$ the element representing the symplectic form, and 
$$
|\widehat{T}(H)| = \widehat{T}(H)/[\widehat{T}(H), \widehat{T}(H)]
$$
the degree completed space of cyclic words with alphabet defined by $H$. We denote the natural projection $\widehat{T}(H) \to |\widehat{T}(H)|$ by $x \mapsto |x|$. We say that an element $a \in \widehat{L}$ is conjugate to $b \in \widehat{L}$ if there is a group-like element $g \in \exp(\widehat{L})$ such that $a=gbg^{-1}$.
The following result plays a key role in the paper:
\begin{thm}[$=$ Theorem \ref{thm:sympl}]
\label{thm:conjugation_intro}
The element $a \in \widehat{L}$ is conjugate to $\omega$ if and only if 
$|\exp(a)| = |\exp(\omega)|$.
\end{thm}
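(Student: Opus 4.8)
The forward implication is the easy one: if $a=g\omega g^{-1}$ with $g=\exp(u)$ group-like, then $\exp(a)=g\exp(\omega)g^{-1}$, and cyclic invariance $|xy|=|yx|$ gives $|\exp(a)|=|g^{-1}g\exp(\omega)|=|\exp(\omega)|$. For the converse the plan is to construct $g$ by successive conjugations that push $a$ onto $\omega$ degree by degree. Write $a=\sum_{n\ge1}a_n$ with $a_n\in L_n$. Comparing degree-one parts forces $a_1=0$, since $|\cdot|$ is the identity on $L_1=H$; in every higher degree $|\cdot|$ kills $L_n$ ($n\ge2$), because such elements are sums of brackets and $|[p,q]|=0$, so these degrees carry no naive information and the argument must be arranged more carefully.

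The computational engine is the first-variation identity
\[
\frac{d}{dt}\,|\exp(a_t)| = |\exp(a_t)\,\dot a_t|,
\]
which follows from the integral formula for $d\exp$ together with cyclicity. Assume inductively that $a$ has been conjugated into the form $a=\omega+r$ with $r$ of lowest degree $N+1\ge3$, still satisfying $|\exp(a)|=|\exp(\omega)|$. Expanding $|\exp(\omega+r)|-|\exp(\omega)|=|\exp(\omega)\,r|+(\text{terms of order}\ge2\text{ in }r)$ and isolating degree $N+3$, every contribution drops except one: the quadratic remainder sits in degrees $\ge 2N+2>N+3$, the term $|r_{N+3}|$ vanishes since $r_{N+3}\in L_{N+3}$, and only $|\omega\,r_{N+1}|$ survives. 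Hence the hypothesis forces $|\omega\,r_{N+1}|=0$. Since conjugating by $\exp(u)$ with $u\in L_{N-1}$ changes $r_{N+1}$ into $r_{N+1}+[u,\omega]$, to cancel the leading term it suffices to solve $r_{N+1}=[\omega,u]$, i.e. to know $r_{N+1}\in\mathrm{im}(\ad_\omega\colon L_{N-1}\to L_{N+1})$. The conjugated element again satisfies the cyclic-word identity, so the step iterates; as the conjugators have strictly increasing degree, their infinite composition converges to a group-like $g$ with $gag^{-1}=\omega$.

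Everything thus rests on the key linear lemma: for $n\ge3$, if $v\in L_n$ satisfies $|\omega v|=0$ then $v\in[\omega,L_{n-2}]$. Equivalently, since $[\omega,L_{n-2}]\subseteq\ker$ by the same cyclic cancellation, the assignment $v\mapsto|\omega v|$ induces an injection $\mathrm{coker}(\ad_\omega\colon L_{n-2}\to L_n)\hookrightarrow|\widehat{T}(H)|_{n+2}$. I expect this to be the main obstacle, and it is precisely where the non-degeneracy of $\omega$ (as opposed to a general element of $\wedge^2H$) must be used. Writing $|\omega v|=\sum_i|x_i[y_i,v]|$ for a symplectic basis $\{x_i,y_i\}$, I would try to prove it either (a) by assembling an $\mathfrak{sl}_2$-action in which $\ad_\omega$ is the raising operator and a symplectic contraction the lowering operator, and invoking a hard-Lefschetz decomposition to describe $\ker/\mathrm{coker}$ via primitive elements matched with cyclic words, or (b) by equipping $|\widehat{T}(H)|$ with the non-degenerate pairing induced by the symplectic form and realizing $v\mapsto|\omega v|$ as the transpose of a surjection, so that the desired injectivity becomes a generation statement.

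Finally the base of the induction, $a_2=\omega$, must be established, and this is genuinely more delicate than the inductive step, since $a_2$ is fixed by every inner conjugation and so must be forced directly from the hypothesis. The degree-four identity yields only the quadratic relation $|a_2^2|=|\omega^2|$, whose solution set is strictly larger than $\{\omega\}$ (already $a_2=-\omega$ satisfies it), and the spurious solutions are eliminated only by the higher even-degree components of $|\exp(a)|=|\exp(\omega)|$. I would therefore handle the base case by a separate rigidity argument for the symplectic element under the full family of invariants $|\exp(\cdot)|_{2k}$, again exploiting non-degeneracy; combined with the key lemma of the previous paragraph, this completes the proof.
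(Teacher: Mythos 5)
Your overall skeleton --- conjugate $a$ onto $\omega$ degree by degree, reducing each step to a linear statement about $\mathrm{ad}_\omega$ and cyclic words --- is exactly the paper's strategy (proof of Theorem \ref{thm:non-conn}, adapted to Theorem \ref{thm:sympl}), and you have correctly located the two hard points. But both are left unproven, and the more serious one, your ``key linear lemma,'' is stated in a form \emph{stronger} than what the paper proves and quite possibly false. Because you extract information only from the degree-$(N+3)$ component of the single identity $|\exp(a)|=|\exp(\omega)|$, you obtain only the one condition $|\omega\, r_{N+1}|=0$, and you are then forced to claim that $|\omega v|=0$ alone implies $v\in[\omega,L_{n-2}]$. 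The paper instead first proves that $|U(\gfg)|$ splits along the PBW decomposition, $|U(\gfg)|=\bigoplus_m|\sym^m\gfg|$ (Theorem \ref{thm:assert}), whence $|\exp(a)|=|\exp(\omega)|$ forces $|a^m|=|\omega^m|$ \emph{for every} $m$ (Theorem \ref{thm:dec}); isolating the degree-$(2m+N-1)$ part of each of these identities yields the whole family $|\omega^{l}r_{N+1}|=0$ for all $l\ge 1$. Only with this full family is the linear lemma (Proposition \ref{prop:A2sympl}, reduced to Proposition \ref{prop:aomega}) provable: the argument of Section \ref{sec:Proposition_3_10} applies contractions $\pi\otimes 1^{\otimes\cdot}\otimes C^{\otimes l}$ and crucially varies $l$ to separate terms with different $l$-dependence ($(2g)^l$ versus $(-1)^l$ versus $l(-1)^l$) by a Vandermonde-type determinant; with $l=1$ alone you get a single linear relation and no such separation. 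Your suggested routes (a) and (b) are speculation, and nothing in the paper supports the single-$l$ version.

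Two further points. First, your inductive step also needs the analogue of Lemma \ref{lem:n3}: Proposition \ref{prop:A2sympl} produces $b\in\widehat{T}$ with $r_{N+1}=[\omega,b]$, and one must correct $b$ by an element of $\K[[\omega]]$ to land in $\widehat{L}$ before exponentiating it to a group-like conjugator; you pass over this. Second, you are right that the base case $a_2=\omega$ does not follow from the linearized step (the paper is terse here, deferring to the ``similar'' proof of Theorem \ref{thm:non-conn}), but announcing a ``separate rigidity argument'' without giving one leaves that gap open as well. In short: correct architecture, but the two lemmas carrying all the content are missing, and the one you formulate precisely is not the one the paper's method can deliver.
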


The paper is organized as follows.
In Section \ref{sec:results}, we recall some material from \cite{highergenus} such as the definition of group-like expansions for a surface whose boundary may not be connected, and give a statement of the main result in full generality.
We also prove Theorem \ref{thm:introKVmod} by using Theorem \ref{thm:intromain}.
Section \ref{sec:second} is devoted to conjugation theorems for elements of free Lie algebras. In particular, we prove Theorem \ref{thm:conjugation_intro} modulo some technical statement (Proposition \ref{prop:aomega}). In Section \ref{sec:main_applications}, we give a proof of Theorem \ref{thm:intromain} and discuss applications of this result to non-commutative Poisson geometry. Finally, in Section \ref{sec:Proposition_3_10} we prove Proposition \ref{prop:aomega}.

\vskip 0.2cm

{\bf Acknowledgements.} We are grateful to the Simons Center for Geometry in Physics at the Stony Brook University where part of this work was conducted. Research of AA was supported in part by the project MODFLAT of the European Research Council (ERC), by the grants number 178794 and 178828 and by the NCCR SwissMAP of the Swiss National Science Foundation (SNSF).
Research of NK was supported in part by the grants JSPS KAKENHI 15H03617, 26287006 and 18K03283.
Research of YK was supported in part by the grant JSPS KAKENHI 26800044 and 18K03308.
Research of FN was supported by the grant of Early Postdoc Mobility grant 175033 of the Swiss National Science Foundation.

\tableofcontents

\section{Setup and statement of results}
\label{sec:results}

\subsection{Group-like expansions}

Let $\Sigma=\Sigma_{g,n+1}$ be a compact connected oriented surface of genus $g$ with $n+1$ boundary components, where $g$ and $n$ are non-negative integers.
Label the boundary components of $\Sigma$ by integers $0,1,\ldots,n$, and choose a basepoint $*$ on the $0$th boundary component.
Then the fundamental group $\pi=\pi_1(\Sigma,*)$ has a set of free generators $\alpha_i,\beta_i,\gamma_j$, $i=1,\ldots,g$, $j=1,\ldots,n$, such that $\gamma_j$ is freely homotopic to the $j$th boundary component with positive orientation and
\[
\prod_{i=1}^g \alpha_i \beta_i {\alpha_i}^{-1} {\beta_i}^{-1} \prod_{j=1}^n \gamma_j = \gamma_0,
\]
where $\gamma_0$ is the based loop around the $0$th boundary component with negative orientation.

The first homology group $H=H_1(\Sigma,\K)$ of the surface $\Sigma$ has a 2-step decreasing filtration defined by $H^{(1)}=H$ and
\[
H^{(2)}=\{ x\in H \mid \text{$\langle x,y \rangle = 0$ for all $y\in H$} \},
\]
where $\langle \cdot, \cdot \rangle \colon H\times H \to \K$ is the intersection pairing.
Let
\[
{\rm gr}\, H:= H/H^{(2)} \oplus H^{(2)}
\]
be the associated graded vector space.
The homology classes of $\alpha_i$ and $\beta_i$ give rise to a basis of $H/H^{(2)}$, we denote the corresponding basis elements by $x_i$ and $y_i$. The homology classes of $\gamma_j$, denoted by $z_j$, give rise to a basis of $H^{(2)}$.

Let $A=\widehat{T}({\rm gr}\, H)$ be the completed tensor algebra over ${\rm gr}\, H$.
In other words, $A$ is the completed free associative algebra generated by variables $x_i$, $y_i$, $z_j$.
We assign weights to the generators as follows:
\begin{equation} \label{eq:weights}
{\rm wt}(x_i)={\rm wt}(y_i)=1, \hskip 0.3cm {\rm wt}(z_j)=2.
\end{equation}
Then, the algebra $A$ becomes graded and thus filtered.
Besides, $A$ naturally carries the structure of a complete Hopf algebra.
We denote by $\widehat{L}$ the set of primitive elements in $A$.
It is identified with the completed free Lie algebra generated by ${\rm gr}\, H$.

As was shown in \cite[\S 3.1]{highergenus}, there is a unique multiplicative filtration $\{ \K \pi(m) \}_{m\ge 0}$ of two-sided ideals of the group algebra $\K \pi$ such that $\K \pi(0) = \K \pi$, $\alpha_i-1, \beta_i-1 \in \K \pi(1)$, and $\gamma_j-1 \in \K \pi(2)$.
Furthermore, the (completion of the) associated graded of this filtration is canonically isomorphic to $A$ \cite[Proposition 3.12]{highergenus}.

Let $\widehat{\K \pi} = \varprojlim_m \K\pi/ \K \pi(m)$ be the completion of $\K \pi$ with respect to the filtration described above.
We have ${\rm gr}\, \widehat{\K \pi} = {\rm gr}\, \K \pi =A$.

\begin{dfn}[\cite{highergenus}, Definition 3.19]
\label{dfn:group-like}
A group-like expansion of $\pi$ is an isomorphism $\theta \colon \widehat{\K \pi} \to A$ of complete filtered Hopf algebras such that the associated graded map is the identity: ${\rm gr}\, \theta = {\rm id}$.
\end{dfn}

For example, the map $\theta_{\exp}$ defined by the following values on generators
\[
\theta_{\exp}(\alpha_i) = e^{x_i}, \quad
\theta_{\exp}(\beta_i) = e^{y_i}, \quad
\theta_{\exp}(\gamma_j) = e^{z_j}
\]
is a group-like expansion.
Any group-like expansion $\theta$ can be written as
\[
\theta_F = F^{-1} \circ \theta_{\exp}
\]
for some $F\in \aut(\widehat{L})$ with ${\rm gr} \, F = {\rm id}$.

\subsection{Goldman-Turaev Lie bialgebra and its graded version}

For a (topological) associative $\K$-algebra $\mathfrak{A}$, we denote
\[
|\mathfrak{A}| : =\mathfrak{A}/[\mathfrak{A},\mathfrak{A}],
\]
where $[\mathfrak{A},\mathfrak{A}]$ is the (closure of the) $\K$-span of elements of the form $ab-ba$ with $a,b\in \mathfrak{A}$.
If $\mathfrak{A}$ is filtered, then $|\mathfrak{A}|$ is naturally filtered.
Let $|\cdot |\colon \mathfrak{A} \to |\mathfrak{A}|$ be the natural projection.

The space $\mathfrak{g}=\mathfrak{g}(\Sigma):=|\K \pi|$ is canonically isomorphic to the $\K$-span of  homotopy classes of free loops in $\Sigma$.
As was shown by Goldman \cite{Go86}, the space $\mathfrak{g}$ has a Lie bracket $[\cdot,\cdot]$ defined in terms of intersections of free loops.
By using self-intersections of free loops, Turaev \cite{Tu91} introduced a Lie cobracket on the space $\mathfrak{g}/\K {\bf 1}$, where ${\bf 1}$ denotes the class of a constant loop.
By fixing a framing $f$ on $\Sigma$ (that is, a choice of trivialization of the tangent bundle of $\Sigma$), one can lift it to a Lie cobracket on the space $\mathfrak{g}$, which we denote by $\delta^f$.
The triple $(\mathfrak{g},[\cdot,\cdot],\delta^f)$ becomes a Lie bialgebra \cite[\S 2]{highergenus}.

The Goldman bracket $[\cdot,\cdot]$ and the framed Turaev cobracket $\delta^f$ extend naturally to the Lie bialgebra structure on the completion $\widehat{\mathfrak{g}} = |\widehat{\K \pi}|$.
Moreover, they induce a Lie bialgebra struture on the associated graded space ${\rm gr}\, \widehat{\mathfrak{g}}$, which we denote by $({\rm gr}\, \widehat{\mathfrak{g}}, [\cdot, \cdot]_{\rm gr}, \delta^f_{\rm gr})$.
One can also view the space ${\rm gr}\, \widehat{\mathfrak{g}} \cong |A|$ as the space spanned by cyclic words in $x_i$, $y_i$, $z_j$.
The Lie bracket $[\cdot,\cdot]_{\rm gr}$ and Lie cobracket $\delta^f_{\rm gr}$ coincide with the necklace Lie bialgebra structure associated to the quiver with $g$ circles and $n$ edges emanating from a distinguished vertex, where the Lie bracket was introduced by Bocklandt-Le Bruyn \cite{BB02} and Ginzburg \cite{Ginz01} and the Lie cobracket by Schedler \cite{Schedler}.
Any group-like expansion $\theta$ induces an isomorphism
\[
\theta \colon \widehat{\mathfrak{g}} = |\widehat{\K \pi}| \overset{\cong}{\to} |A| = {\rm gr}\, \widehat{\mathfrak{g}}
\]
of complete filtered $\K$-vector spaces.

\subsection{The main result: Goldman formality revisited}

\begin{dfn}[\cite{highergenus}, Definition 3.21]
A group-like expansion $\theta$ is called tangential if for any $j=1,\ldots,n$, there is a group-like element $g_j \in A$ such that $\theta(\gamma_j) = g_j e^{z_j} {g_j}^{-1}$.
Furthermore, a tangential group-like expansion $\theta$ is called special if $\theta(\gamma_0) = e^{\omega}$, where $\omega = \sum_i [x_i,y_i] + \sum_j z_j$.
\end{dfn}

Note that the elements $\omega_0 = \sum_i [x_i,y_i]$ and $\sum_j z_j$ (once we choose the 0th boundary component of $\Sigma$) are independent of the choice of generators $\alpha_i, \beta_i, \gamma_j$, and, hence, so is the element $\omega$.

\begin{rem}
For the defining conditions for special expansions, see also \cite[\S 7.2]{KK16}.
For $n=0$, the boundary condition $\theta(\gamma_0) = e^{\omega_0}$ was first turned into a definition by Massuyeau \cite{Mas12}.
In this case, special expansions are called symplectic expansions.
Special expansions exist for any $g$ and $n$.
For examples for $g=0$ or $n=0$, see \cite{HM00, AET10, Kaw06, Kun12}.
For the general case, there is a gluing argument which proves existence of special expansions, see \cite[\S 3.5]{highergenus} for details.
\end{rem}

Let us recall some results on special expansions and the Goldman bracket.

\begin{thm}[Kawazumi-Kuno \cite{KK14, KK16}, Massuyeau-Turaev \cite{MT13,MTpre}]
\label{thm:KKMT}
Every special expansion $\theta$ induces a Lie algebra isomorphism between the completed Goldman Lie algebra $(\widehat{\mathfrak{g}}, [\cdot, \cdot])$ and its associated graded, $({\rm gr}\, \widehat{\mathfrak{g}},[\cdot,\cdot]_{\rm gr})$.
\end{thm}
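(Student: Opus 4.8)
The map $\theta\colon \widehat{\mathfrak{g}} \to \gr\,\widehat{\mathfrak{g}}$ is already an isomorphism of complete filtered $\K$-vector spaces, so the plan is to prove only that it intertwines the two brackets, i.e.\ that $\theta([a,b]) = [\theta(a),\theta(b)]_{\gr}$ for all $a,b \in \widehat{\mathfrak{g}}$. First I would replace Goldman's topological definition by an algebraic model on the group algebra. Goldman's intersection formula is the ``trace'' of a double bracket (equivalently, a homotopy intersection form) $\kappa\colon \widehat{\K\pi} \otimes \widehat{\K\pi} \to \widehat{\K\pi}\otimes\widehat{\K\pi}$ lifting the intersection pairing on $H$: it is a biderivation-type map whose lowest-order part is $\langle\cdot,\cdot\rangle$, and the Goldman bracket is recovered as $[\,|x|,|y|\,] = |m(\kappa(x,y))|$, where $m$ is the multiplication and $|\cdot|$ the projection to cyclic words. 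On the graded side $A = \widehat{T}(\gr\,H)$ carries the Kirillov--Kostant--Souriau double bracket $\kappa_0$ built purely from the symplectic element $\omega$, whose trace is exactly the necklace bracket $[\cdot,\cdot]_{\gr}$.

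Since $\theta$ is an isomorphism of complete filtered Hopf algebras, it transports $\kappa$ to a double bracket $\theta_\ast\kappa := (\theta\otimes\theta)\circ\kappa\circ(\theta^{-1}\otimes\theta^{-1})$ on $A$, and the transported Goldman bracket on $|A|$ is the trace of $\theta_\ast\kappa$. The heart of the argument is then to compare $\theta_\ast\kappa$ with $\kappa_0$. Both are double brackets, hence determined by their values on the generators $x_i,y_i,z_j$ via the Leibniz rule; because $\gr\,\theta = \mathrm{id}$, the leading symbol of $\theta_\ast\kappa$ is the intersection pairing and therefore agrees with that of $\kappa_0$. The remaining, lower-order corrections measure how $\theta$ interacts with the full fundamental-group structure, and they are governed by the defining relation $\prod_i \alpha_i\beta_i\alpha_i^{-1}\beta_i^{-1}\prod_j \gamma_j = \gamma_0$ together with the $\theta$-images of the boundary loops.

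This is where the special hypothesis enters, and it is also the step I expect to be the main obstacle. The tangentiality condition $\theta(\gamma_j) = g_j e^{z_j} g_j^{-1}$ conjugates each $\gamma_j$ and is therefore invisible after projecting to cyclic words, while the boundary condition $\theta(\gamma_0) = e^{\omega}$ with $\omega = \sum_i [x_i,y_i] + \sum_j z_j$ pins the remaining data to the symplectic normal form. I would show that, modulo $[A,A]$, these two conditions force every correction term of $\theta_\ast\kappa$ relative to $\kappa_0$ to vanish, so that $\theta_\ast\kappa$ and $\kappa_0$ have the same trace. Carrying this out amounts to a careful bookkeeping of the Fox-derivative (equivalently, double-bracket Leibniz) expansion of the intersection form along the boundary relation, and to checking that only the conjugacy class of $\theta(\gamma_0)$, with value $e^\omega$, survives. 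Granting this, $\theta([a,b]) = [\theta(a),\theta(b)]_{\gr}$ follows, and since $\theta$ is already bijective it is a Lie algebra isomorphism. An alternative route, closer to the $\tder$/$\sder$ language used later in the paper, is to realise the Goldman bracket through an injective Lie algebra homomorphism $\widehat{\mathfrak{g}} \to \der(\widehat{\K\pi})$ intertwined by $\theta$ with the analogous map $\gr\,\widehat{\mathfrak{g}} \to \der(A)$; the special condition again supplies exactly the compatibility needed.
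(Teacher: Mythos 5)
The paper does not prove Theorem \ref{thm:KKMT}: it is imported verbatim from Kawazumi--Kuno and Massuyeau--Turaev, so the only possible comparison is with the proofs in those references. Your outline correctly identifies their strategy -- realize the Goldman bracket as the trace $[\,|x|,|y|\,]=|m(\kappa(x,y))|$ of a homotopy intersection form (Fox pairing / double bracket) $\kappa$ on $\widehat{\K\pi}$, transport $\kappa$ by $\theta$ to $A$, and compare the result with the symplectic--KKS double bracket $\kappa_0$ whose trace is the necklace bracket. The reduction of the theorem to the statement ``$\theta_\ast\kappa$ and $\kappa_0$ induce the same bracket on $|A|$'' is sound, and the observation that conjugating $\theta(\gamma_j)$ does not affect the induced bracket is also correct in spirit.

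The genuine gap is that the decisive step is asserted rather than proved. Knowing that $\gr\,\theta=\mathrm{id}$ only pins down the leading symbol of $\theta_\ast\kappa$; a double bracket is \emph{not} determined by its symbol together with its values on the classes $|\gamma_j|$, so ``the boundary conditions force every correction term to vanish'' does not follow from what you have written. What the cited proofs actually supply at this point is a rigidity statement: Massuyeau--Turaev prove a uniqueness theorem for Fox pairings with prescribed symbol, showing that the transported intersection form equals the pairing built from $\omega$ \emph{up to inner Fox pairings}, and that inner pairings induce the zero bracket on cyclic words; the condition $\theta(\gamma_0)=e^{\omega}$ enters through the identity expressing $\eta(a,a)$ (equivalently, the skew-symmetrization defect of $\eta$) in terms of the boundary class $\gamma_0$, which is where Poincar\'e--Lefschetz duality of the surface is used. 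Kawazumi--Kuno instead compute explicitly the conjugate $\theta\circ\sigma(x)\circ\theta^{-1}$ of the derivation action $\sigma\colon\widehat{\mathfrak{g}}\to\der(\widehat{\K\pi})$ and match it with the graded action. Either route requires a concrete structural input (a classification of pairings modulo inner ones, or an explicit formula for the transported action) that your proposal defers to ``careful bookkeeping''; without it the argument is a plan, not a proof.
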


The main result of this paper is  the converse of Theorem \ref{thm:KKMT} (up to conjugacy):

\begin{thm}
\label{thm:main1}
Let $\theta\colon \widehat{\K \pi} \to A$ be a group-like expansion and assume that $\theta$ induces a Lie algebra isomorphism between $(\widehat{\mathfrak{g}}, [\cdot, \cdot])$ and $({\rm gr}\, \widehat{\mathfrak{g}},[\cdot,\cdot]_{\rm gr})$.
Then, $\theta$ is conjugate to a special expansion.
Namely, $\theta$ is tangential and there exists a group-like element $g_0 \in A$ such that $\theta(\gamma_0) = g_0 e^{\omega} {g_0}^{-1}$.
\end{thm}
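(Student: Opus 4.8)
The plan is to reduce the two required conjugacies --- tangentiality $\theta(\gamma_j) \sim e^{z_j}$ for $j=1,\dots,n$ and $\theta(\gamma_0) \sim e^{\omega}$ --- to equalities of cyclic words in $|A|$, and then to invoke Theorem \ref{thm:conjugation_intro} (together with its analogue for the radical generators $z_j$). Since $\theta$ is a Hopf algebra isomorphism and each $\gamma_j$ is group-like, I write $\theta(\gamma_j)=\exp(a_j)$ with $a_j\in\widehat{L}$ primitive, and likewise $\theta(\gamma_0)=\exp(a_0)$ using the relation $\gamma_0=\prod_i[\alpha_i,\beta_i]\prod_j\gamma_j$. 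Because $\gr\,\theta=\mathrm{id}$, the leading terms are fixed: $\gr\,a_j=z_j$ and $\gr\,a_0=\omega$. By Theorem \ref{thm:conjugation_intro}, the assertion $a_0\sim\omega$ is equivalent to $|\exp(a_0)|=|\exp(\omega)|$, i.e. to $|\theta(\gamma_0)|=|e^{\omega}|$; an entirely analogous criterion reduces tangentiality to $|\theta(\gamma_j)|=|e^{z_j}|$. Thus everything comes down to proving these identities in the space of cyclic words.

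To extract them from the hypothesis, I would first record the topological input: every boundary loop $\gamma_j$ ($j=0,1,\dots,n$) is central in the Goldman Lie algebra $(\widehat{\mathfrak{g}},[\cdot,\cdot])$, since it admits a representative disjoint from every other loop. As $\theta$ is assumed to be a Lie algebra isomorphism onto $(\gr\,\widehat{\mathfrak{g}},[\cdot,\cdot]_{\gr})\cong(|A|,[\cdot,\cdot]_{\gr})$, the cyclic words $|\theta(\gamma_j)|=\theta(|\gamma_j|)$ are central for the necklace bracket. Writing out $[\,\cdot\,,|x_i|\,]_{\gr}$ and $[\,\cdot\,,|y_i|\,]_{\gr}$ as the weight-lowering letter-deletion operations determined by the intersection pairing (and noting $[\,\cdot\,,|z_j|\,]_{\gr}=0$), centrality splits weight by weight, so each homogeneous component of $|\theta(\gamma_j)|$ is separately central. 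This pins down the leading components and sets up an induction on weight constraining the unknown higher terms of $a_j$.

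The decisive step is to promote this centrality, together with the fixed leading term, to the exact identity $|\exp(a_j)|=|\exp(\tilde a_j)|$ with $\tilde a_0=\omega$ and $\tilde a_j=z_j$. Here I would use that $a_j$ is a genuine Lie element: for $b\in\widehat{L}$ one has $|b|=0$ on every bracket, so only the generator-linear part of $b$ survives in $|b|$, and the higher-weight terms of $b$ enter $|\exp(b)|$ only through the products $|b^2|,|b^3|,\dots$. Feeding the centrality constraints into this expansion and running the induction, the higher Lie corrections to $a_j$ are forced to contribute exactly as in $\exp(\omega)$ (respectively $\exp(z_j)$). This is precisely the role of the technical Proposition \ref{prop:aomega}, on which Theorem \ref{thm:conjugation_intro} rests; invoking it yields the desired cyclic-word identities and hence, via the conjugation theorems, $a_0\sim\omega$ and $a_j\sim z_j$. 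Assembling the resulting group-like conjugators $g_0,g_1,\dots,g_n$ shows that $\theta$ is tangential with $\theta(\gamma_0)=g_0e^{\omega}g_0^{-1}$, which is the assertion.

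I expect the main obstacle to be exactly this last promotion. Centrality of $|\theta(\gamma_j)|$ for the necklace bracket is far from sufficient on its own, because the center is large (it contains all cyclic words in the radical generators $z_j$, as well as the higher components of $|e^{\omega}|$), so the correct leading term does not by itself rule out spurious central corrections at higher weight. What rigidifies the situation is the group-like constraint --- that $|\theta(\gamma_j)|$ is the reduction $|\exp(a_j)|$ of an exponential of a Lie element --- and controlling this interaction between the associative (cyclic-word) structure and the Lie structure is the heart of the matter, carried out through Proposition \ref{prop:aomega}. A secondary, bookkeeping-type difficulty is to run the argument uniformly over all boundary components at once and to pass from the connected-boundary case (Theorem \ref{thm:intromain}) to arbitrary genus and number of boundary components.
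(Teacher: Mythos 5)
Your architecture matches the paper's: boundary loops are central for the Goldman bracket, so $|\theta(\gamma_j)|$ lands in the center of the necklace Lie algebra; one then upgrades this to the exact cyclic-word identities $|\theta(\gamma_j)|=|\exp(z_j)|$ and $|\theta(\gamma_0)|=|\exp(\omega)|$; and finally the conjugation theorems (Theorem \ref{thm:non-conn} when the target has a nontrivial linear term, i.e.\ for all $z_j$ and also for $\omega$ when $n\ge 1$, and Theorem \ref{thm:sympl} for $\omega=\omega_0$ when $n=0$) convert these identities into the asserted conjugacies. The endpoints of your argument are sound and agree with the paper.

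The gap is in the middle step, which you correctly identify as the crux but for which you invoke the wrong tool. Proposition \ref{prop:aomega} does not produce the cyclic-word identities: its role is entirely inside the proofs of the conjugation theorems (via Proposition \ref{prop:A2sympl}), i.e.\ in passing from $|\exp(a)|=|\exp(\omega)|$ to $a\sim\omega$, a step you already take for granted by citing Theorem \ref{thm:sympl}. What actually promotes centrality to $|\theta(\gamma_j)|=|\exp(z_j)|$ (resp.\ $|\exp(\omega)|$) is the combination of (a) the explicit description of the center, $Z(\gr\,\widehat{\mathfrak{g}},[\cdot,\cdot]_{\gr})=|\K[[\omega]]|\oplus\bigoplus_{j}|\K[[z_j]]_{\geq 1}|$ (Theorem \ref{thm:center}, imported from \cite{highergenus} and \cite{CBEG07} --- a substantial result that your ``induction on weight via letter-deletion operations'' would essentially have to reprove from scratch), and (b) the PBW-type decomposition $|\widehat{T}|=\prod_m|\sym^m\widehat{L}|$ (Theorem \ref{thm:assert}), whose corollary Theorem \ref{thm:dec2} separates $|\exp(u)|$ into the components $|u^m|/m!$ and yields $|(\log\theta(\gamma_j))^m|\in\K|\omega^m|\oplus\bigoplus_{j'}\K|{z_{j'}}^m|$ term by term. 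Since the right-hand side is concentrated in weight exactly $2m$ while $\log\theta(\gamma_j)$ has leading term $z_j$ (resp.\ $\omega$) in weight $2$, each $|(\log\theta(\gamma_j))^m|$ is forced to equal $|{z_j}^m|$ (resp.\ $|\omega^m|$) with no higher-weight corrections; this is Proposition \ref{prop:411}. Without the PBW separation and the center computation your induction does not close --- centrality alone, as you note, leaves room for spurious central corrections --- and the group-likeness of $\theta(\gamma_j)$ enters precisely through Theorem \ref{thm:dec2}, not through Proposition \ref{prop:aomega}. With these two ingredients supplied, your outline becomes exactly the paper's proof.
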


\begin{rem}
Denote by $\iota_{j}\colon \Z=\pi_1(S^1) \to \pi/\text{conj}$ the maps $\iota_j\colon t^n \mapsto |\gamma_j^n|$ induced by inclusions of the  boundary components into $\Sigma$. These maps are independent of the concrete choice of generators $\gamma_j \in \pi$ and they are compatible with filtrations if one assigns a filtration degree $2$ to the generator $t \in \mathbb{Z}$. Note that the complete Hopf algebra $\widehat{\mathbb{K}\mathbb{Z}}$ admits a unique group-like expansion $\theta_{\exp}\colon \widehat{\mathbb{K}\mathbb{Z}} \to \mathbb{K}[[\tau]]$ given by 
$\theta_{\exp}(t)=\exp(\tau)$, where $\tau$ is the primitive generator of degree $2$. The map $\iota_j$ induces a map of associated graded
${\rm gr} \, \iota_j\colon \mathbb{K}[[\tau]] \to |A|$.
With this notation, Theorem \ref{thm:main1} is equivalent to the following statement:

\begin{quote}
Let $\theta\colon \widehat{\mathbb{K}\pi} \to A$ be a group-like expansion. Then, $\theta$ induces a Lie algebra isomorphism between $(\widehat{\mathfrak{g}}, [\cdot, \cdot])$ and $({\rm gr}\, \widehat{\mathfrak{g}},[\cdot,\cdot]_{\rm gr})$ if and only if it preserves all the boundary components in the sense that for $j = 0, \ldots, n$ the following diagram commutes:
\begin{displaymath}
    \xymatrix{
        \widehat{\mathfrak{g}} \ar[r]^\theta  & {\rm gr} \, \widehat{\mathfrak{g}} \\
        \widehat{\mathbb{K} \Z} \ar[u]_{\iota_j} \ar[r]^{\theta_{\exp}}       & \mathbb{K}[[\tau]] \ar[u]_{{\rm gr}\, \iota_j}}
\end{displaymath}
\end{quote}
\end{rem}

Let ${\rm Aut}(\widehat{L})$ be the group of filtration preserving automorphisms of $\widehat{L}$.
We say that $F\in {\rm Aut}(\widehat{L})$ is tangential if for any $j=1,\ldots,n$ there is a group-like element $f_j \in A$ such that $F(z_j) = {f_j}^{-1} z_j f_j$.
Note that any $F\in {\rm Aut}(\widehat{L})$ extends to a filtration preserving automorphism of $A=U(\widehat{L})$, and thus induces a filtration preserving automorphism of $|A|={\rm gr}\, \widehat{\mathfrak{g}}$.
Also, since $F$ is filtration preserving, ${\rm gr}\, F$ is defined as an automorphism of ${\rm gr}\, \widehat{L}=\widehat{L}$.

It turns out that Theorem \ref{thm:main1} can be restated as a property of the automorphism group of the Lie algebra 
$({\rm gr}\, \widehat{\mathfrak{g}}, [\cdot, \cdot]_{\rm gr})$.

\begin{thm}
\label{thm:main2}
Let $F\in {\rm Aut}(\widehat{L})$ such that ${\rm gr}\, F = {\rm id}$ and
assume that it induces an automorphism of the Lie algebra $({\rm gr}\, \widehat{\mathfrak{g}},[\cdot,\cdot]_{\rm gr})$.
Then, $F$ is tangential and there exists a group-like element $f_0 \in A$ such that $F(\omega) = {f_0}^{-1} \omega f_0$.
\end{thm}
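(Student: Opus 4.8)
The plan is to derive Theorem \ref{thm:main2} from Theorem \ref{thm:main1} by translating between $F$ and the group-like expansion $\theta_F=F^{-1}\circ\theta_{\exp}$. Write $\bar\theta\colon\widehat{\mathfrak{g}}=|\widehat{\K\pi}|\to|A|=\gr\widehat{\mathfrak{g}}$ for the map induced by an expansion $\theta$ on cyclic words, and likewise $\bar F\colon|A|\to|A|$ for the map induced by the algebra automorphism $F$. Two observations drive the reduction: first, $\bar\theta$ is unchanged if $\theta$ is altered by an inner automorphism of $A$, since $|gag^{-1}|=|a|$; second, $\theta_{\exp}$ induces a Lie-bracket isomorphism $\bar\theta_{\exp}$. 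Granting the latter, the factorization $\bar\theta_F=\bar F^{-1}\circ\bar\theta_{\exp}$ shows that $\theta_F$ induces a Goldman-to-necklace Lie isomorphism if and only if $\bar F$ is an automorphism of $(\gr\widehat{\mathfrak{g}},[\cdot,\cdot]_{\gr})$, i.e. exactly under the hypothesis of Theorem \ref{thm:main2}. Theorem \ref{thm:main1} then applies to $\theta_F$.

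Next I would unwind the conclusion of Theorem \ref{thm:main1}. Tangentiality transfers immediately: from $\theta_{\exp}(\gamma_j)=e^{z_j}$ we get $\theta_F(\gamma_j)=e^{F^{-1}(z_j)}$, and tangentiality of $\theta_F$ forces $F^{-1}(z_j)$, hence $F(z_j)$, to be conjugate to $z_j$. For the boundary element one uses that $\theta_{\exp}(\gamma_0)=\prod_i[e^{x_i},e^{y_i}]\prod_j e^{z_j}$ is conjugate to $e^{\omega}$; applying $F^{-1}$ to this conjugacy turns the conclusion $\theta_F(\gamma_0)\sim e^{\omega}$ of Theorem \ref{thm:main1} into $e^{F^{-1}(\omega)}\sim e^{\omega}$, whence $F^{-1}(\omega)\sim\omega$ and therefore $F(\omega)\sim\omega$.

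A more self-contained route runs directly on the necklace Lie algebra and makes the role of the key tool explicit. Since $\bar F$ preserves $[\cdot,\cdot]_{\gr}$ it preserves the center of $(\gr\widehat{\mathfrak{g}},[\cdot,\cdot]_{\gr})$, which is spanned by the boundary classes $|e^{m z_j}|$ and $|e^{m\omega}|$; here $\omega=\sum_i[x_i,y_i]+\sum_j z_j$ plays the role of the non-commutative moment map, so that $|e^{\omega}|$ is central, mirroring the centrality of the boundary loop in the Goldman bracket. The goal is to upgrade preservation of the center to the stronger statement that $\bar F$ fixes the individual classes, namely $|e^{F(z_j)}|=|e^{z_j}|$ and $|e^{F(\omega)}|=|e^{\omega}|$ (using $F(e^{a})=e^{F(a)}$). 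Theorem \ref{thm:conjugation_intro} is precisely the device that converts the cyclic-word identity $|e^{F(\omega)}|=|e^{\omega}|$ into the conjugacy $F(\omega)\sim\omega$, and its companion for a single generator converts $|e^{F(z_j)}|=|e^{z_j}|$ into tangentiality.

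The hard part, in both routes, concentrates at the $0$-th boundary component and its interaction with the moment map $\omega$. In the reduction it is the input that $\bar\theta_{\exp}$ is a bracket isomorphism, equivalently the cyclic-word identity $|\prod_i[e^{x_i},e^{y_i}]\prod_j e^{z_j}|=|e^{\omega}|$; in the direct route it is the passage from ``$\bar F$ preserves the center'' to ``$\bar F$ fixes $|e^{\omega}|$'', which is not forced by $\gr F=\id$ alone (that hypothesis pins down only the leading term of $|e^{F(\omega)}|$). I expect this step to require pairing the bracket against a sufficiently large family of test elements, or exploiting the moment-map interpretation of $\omega$ directly; once the cyclic-word identities are secured, Theorem \ref{thm:conjugation_intro} closes the argument.
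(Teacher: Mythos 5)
There is a genuine gap, and it sits exactly where you located the ``hard part'': the assertion that $\theta_{\exp}$ induces a Lie-bracket isomorphism, equivalently that $\bigl|\prod_i e^{x_i}e^{y_i}e^{-x_i}e^{-y_i}\prod_j e^{z_j}\bigr|=|e^{\omega}|$, is \emph{false}, so your primary reduction cannot be completed. Already for $(g,n+1)=(0,3)$ this would say $|e^{z_1}e^{z_2}|=|e^{z_1+z_2}|$; comparing the components with two $z_1$'s and two $z_2$'s, the left-hand side contributes $\tfrac14|z_1z_1z_2z_2|$ and no $|z_1z_2z_1z_2|$ term, while the right-hand side contributes $\tfrac16|z_1z_1z_2z_2|+\tfrac1{12}|z_1z_2z_1z_2|$, and these cyclic words are linearly independent. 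By Theorem \ref{thm:dec} this also shows $\bch(z_1,z_2)$ is not conjugate to $z_1+z_2$, so $\theta_{\exp}$ is not conjugate to a special expansion and, by Theorem \ref{thm:KKMT} together with Theorem \ref{thm:main1}, does not induce a Goldman bracket isomorphism. (Had your claim been true, $F=\mathrm{id}$ would solve (KV~I) up to conjugation.) The element $\xi=\log\theta_{\exp}(\gamma_0)$ is carried to $\omega$ by an automorphism with trivial associated graded, but not by an inner one. The paper's proof makes precisely the repair your first route needs: it replaces $\theta_{\exp}$ by a \emph{special} expansion $\theta_0$ (existence is quoted from \cite{highergenus}), which \emph{does} induce a bracket isomorphism by Theorem \ref{thm:KKMT}; then $F\circ\theta_0$ is a group-like expansion inducing a bracket isomorphism, Theorem \ref{thm:main1} applies to it, and since $\theta_0(\gamma_0)=e^{\omega}$ and $\theta_0(\gamma_j)$ is conjugate to $e^{z_j}$, the conjugacy conclusions transfer directly to $F$.

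Your second, ``self-contained'' route is sound in outline and is essentially how the paper proves Theorem \ref{thm:main1} itself, but you leave its crux open (``I expect this step to require pairing against test elements\ldots''). The actual mechanism (Proposition \ref{prop:411}) is a homogeneity argument, not a pairing argument: since $\bar F(|e^{\omega}|)$ is central, Theorem \ref{thm:center} and Theorem \ref{thm:dec2} place $|F(\omega)^m|$ in $\K|\omega^m|\oplus\bigoplus_j\K|z_j^m|$, a space concentrated in degree exactly $2m$; the hypothesis $\gr F=\mathrm{id}$ gives $|F(\omega)^m|\equiv|\omega^m|$ modulo higher degree, and since no higher-degree terms are available one gets $|F(\omega)^m|=|\omega^m|$ on the nose, hence $|e^{F(\omega)}|=|e^{\omega}|$, and Theorems \ref{thm:non-conn} and \ref{thm:sympl} finish. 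So neither of your routes is complete as written: the first rests on a false identity, and the second omits the degree argument that closes it.
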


\begin{proof}
Let $\theta_0: \widehat{\K \pi} \to A$ be a special expansion. Then, by Theorem \ref{thm:KKMT}, it induces a Lie algebra isomorphism between $(\widehat{\mathfrak{g}}, [\cdot, \cdot])$ and $({\rm gr}\, \widehat{\mathfrak{g}}, [\cdot, \cdot]_{\rm gr})$. If $F\in {\rm Aut}(\widehat{L})$ satisfies assumptions of Theorem \ref{thm:main2}, then the map $F\circ \theta_0$ a group-like expansion and it also induces a Lie algebra isomorphism between $(\widehat{\mathfrak{g}},[\cdot,\cdot])$ and $({\rm gr}\, \widehat{\mathfrak{g}},[\cdot,\cdot]_{\rm gr})$.
By Theorem \ref{thm:main1}, we conclude that $F\circ \theta_0$ is conjugate to a special expansion. Therefore, $F=(F\circ \theta_0) \circ \theta_0^{-1}$ maps $z_j$'s and $\omega$ to their conjugates, as required.
\end{proof}

\subsection{Goldman-Turaev formality and Kashiwara-Vergne equations}

In this section, we combine Theorem \ref{thm:main1} with our previous result \cite{genus0, highergenus} and derive a necessary and sufficient condition for group-like expansions which induce Goldman-Turaev formality.

In \cite[\S 5.4]{highergenus},
for any compact connected oriented surface of genus $g$ with $n+1$ boundary components and a choice of framing $f$ on it, we introduced the Kashiwara-Vergne problem $\KV^{(g,n+1)}_f$, which asks to find a tangential automorphism of $\widehat{L}$ satisfying two equations ($\KV$ I) and ($\KV$ II).
The original Kashiwara-Vergne problem \cite{KV78, AT12} corresponds to the case of $(g,n+1) = (0,3)$.

More concretely, the first equation is of the form
\[
F(\omega) =
\log \left( \prod_{i=1}^g (e^{x_i} e^{y_i} e^{-x_i} e^{-y_i}) \prod_{j=1}^n e^{z_j} \right) =: \xi.
\tag{KV I}
\]
Since $\theta_{\exp}(\gamma_0) = e^{\xi}$, $F$ satisfies ($\KV$ I) if and only if $\theta_F=F^{-1}\circ \theta_{\exp}$ is a special expansion.

To write down the second equation, we need more material from \cite{highergenus}.
Let
\[
\tder^+ = (\tder^{(g,n+1)})^+=\{ (u,u_1,\ldots,u_n) \in \der^+(\widehat{L}) \times \widehat{L}^{\oplus n} \mid
u(z_j) = [z_j,u_j] \}
\]
be the Lie algebra of tangential derivations on $\widehat{L}$ and
let
\[
\taut = \taut^{(g,n+1)}=\{ (F,f_1,\ldots,f_n) \in \aut^+(\widehat{L}) \times \widehat{L}^{\oplus n} \mid F(z_j) = e^{-f_j}z_j e^{f_j} \}
\]
be the group of tangential automorphisms on $\widehat{L}$.
The divergence cocycle ${\rm div}\colon \tder^+ \to |A|$ is defined as follows:
\[
{\rm div}(u) := \left|\,  \sum_{i=1}^g(\pa_{x_i}u(x_i) + \pa_{y_i}u(y_i) )
+ \sum_{j=1}^n \pa_{z_j}u(z_j)\,  \right|
\in |A|.
\]
Here, for any $a\in A$ without constant term, we denote
\begin{equation}
\label{eq:pader}
a = \sum_{i=1}^g ((\pa_{x_i}a)\, x_i + (\pa_{y_i}a)\, y_i ) + \sum_{j=1}^n (\pa_{z_j}a)\, z_j.
\end{equation}
Now, choose a framing $f$ on $\Sigma$.
For any immersed closed curve $\gamma$ in $\Sigma$, one can define its rotation number $\rot_f(\gamma)\in \mathbb{Z}$ with respect to $f$.
Put
\[
p:=\sum_{i=1}^g (\rot_f(|\beta_i|)\, x_i - \rot_f(|\alpha_i|)\, y_i) \in H/H^{(2)} \subset {\rm gr}\, H \subset A
\]
and define the cocycle $c_f\colon \tder^+ \to |A|$ by
\[
c_f(u)= \sum_j \rot_f(|\gamma_j|) |u_j|.
\]
The cocycles ${\rm div}$ and $c_f$ integrate to group $1$-cocycles
\[
j \colon \taut \to |A| \quad \text{and} \quad C_f \colon \taut \to |A|.
\]
More explicitly, if $F=\exp(u) \in \taut$ with $u\in \tder^+$ then $j(F) = \frac{e^u-1}{u}\cdot \div(u)$.
Set
\[
j_f:= j-C_f.
\]
Finally, introduce the element ${\bf r}\in |A|$ by
\[
{\bf r} = \sum_{i=1}^g \left| \log \big( \frac{e^{x_i}-1}{x_i} \big) + \log \big (\frac{e^{y_i}-1}{y_i} \big) \right|.
\]
With the notation as above, the second Kashiwara-Vergne equation is of the form
\[
j_f(F) = {\bf r}+|p|+\sum_{j=1}^n |h_j(z_j)| - |h(\xi)| \quad \text{for some $h_j,h\in \K[[s]]$.}
\tag{KV II}
\]

We recall the following result:

\begin{thm}[\cite{highergenus}]
\label{thm:KVII}
Let $f$ be a framing on $\Sigma$ and
$F \in \taut$ be a solution of the equation {\rm (KV I)}.
Then, $\theta_F= F^{-1} \circ \theta_{\rm exp}$ is a Lie bialgebra isomorphism from $(\widehat{\mathfrak{g}},[\cdot,\cdot],\delta^f)$ to $({\rm gr}\, \widehat{\mathfrak{g}},[\cdot,\cdot]_{\rm gr},\delta^f_{\rm gr})$ if and only if $F$ satisfies {\rm (KV II)}.
\end{thm}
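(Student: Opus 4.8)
Since $F$ satisfies (KV I), the expansion $\theta_F=F^{-1}\circ\theta_{\exp}$ is special, so Theorem~\ref{thm:KKMT} already provides that $\theta_F$ is an isomorphism of Goldman Lie algebras $(\widehat{\mathfrak{g}},[\cdot,\cdot])\to(\gr\,\widehat{\mathfrak{g}},[\cdot,\cdot]_{\gr})$. The content of the theorem is therefore entirely about the cobracket: one must decide when $\theta_F$ \emph{additionally} intertwines the framed Turaev cobracket, i.e.\ when $(\theta_F\otimes\theta_F)\circ\delta^f=\delta^f_{\gr}\circ\theta_F$. My plan is to isolate this ``cobracket defect'' as an explicit element of $|A|$ and to show that it is governed by $j_f(F)$.

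The first and main step is to transport Turaev's geometric formula for $\delta^f$ to the graded side. Turaev's cobracket \cite{Tu91} counts signed self-intersections of a loop, and the framing $f$ corrects this count by the rotation numbers packaged into $p$ and $c_f$. Passing through the expansion converts self-intersection counting into the non-commutative divergence built from the operators $\pa_{x_i},\pa_{y_i},\pa_{z_j}$, and the framing correction into the cocycle $c_f$. I would establish a master formula of the shape
$$(\theta_F\otimes\theta_F)\circ\delta^f\circ\theta_F^{-1}-\delta^f_{\gr}=d\big(j_f(F)-\mathbf{r}-|p|\big),$$
where $d$ is the coboundary operator attached to the necklace Lie coalgebra structure, that is, the infinitesimal twisting of $\delta^f_{\gr}$ by an element of $|A|$. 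Here the baseline term $\mathbf{r}+|p|$ records the defect already present for the reference expansion $\theta_{\exp}$ with framing $f$ (note that $\mathbf{r}$ is exactly of divergence type, involving $\log\frac{e^{x}-1}{x}$), while $-j_f(F)=-j(F)+C_f(F)$ records the modification produced by $F$; the fact that $\div$ and $c_f$ are cocycles is what makes the right-hand side well defined as $F$ varies over $\taut$.

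Granting the master formula, $\theta_F$ is a Lie coalgebra morphism if and only if $d\big(j_f(F)-\mathbf{r}-|p|\big)=0$, i.e.\ if and only if $j_f(F)-\mathbf{r}-|p|$ lies in $\ker d$. The second step is to identify $\ker d$ with the span of the boundary functions: the elements $|h_j(z_j)|$ depending only on the $j$th boundary loop, and $|h(\xi)|$ depending only on the $0$th boundary loop $\xi=F(\omega)$, for $h_j,h\in\K[[s]]$. These are precisely the elements annihilated by the cobracket, since a boundary curve is simple and its framed self-intersection contribution vanishes; on the graded side this is a direct statement about the Schedler cobracket \cite{Schedler}. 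Combining the two steps, the morphism condition reads $j_f(F)=\mathbf{r}+|p|+\sum_{j=1}^n|h_j(z_j)|-|h(\xi)|$ for some $h_j,h\in\K[[s]]$, which is exactly (KV II).

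The principal obstacle is the master formula of the second paragraph: converting Turaev's topological self-intersection formula into the algebraic divergence, and in particular pinning down the exact baseline $\mathbf{r}+|p|$ together with the normalization and sign with which the framing cocycle $C_f$ enters $j_f$. This is where the genuine non-commutative calculus lives, and it is the analytic heart of the argument in \cite{highergenus}. The secondary technical point, identifying $\ker d$ with the boundary functions, is less delicate but still requires a careful analysis of the necklace Lie coalgebra structure; once both ingredients are in place, the equivalence with (KV II) is purely formal.
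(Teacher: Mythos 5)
The paper does not prove this statement at all: Theorem~\ref{thm:KVII} is imported verbatim from \cite{highergenus} (Theorem~5.12 there), so there is no in-paper argument to compare against. Your outline does track the strategy of the proof in that reference: the bracket half is disposed of by Theorem~\ref{thm:KKMT}, the cobracket defect of $\theta_F$ relative to $\delta^f_{\rm gr}$ is expressed through the non-commutative divergence and the framing cocycle, the value of the defect for $\theta_{\exp}$ produces the baseline $\mathbf{r}+|p|$, and the cocycle property of $j_f$ reduces everything to a condition on $j_f(F)$ modulo a kernel spanned by functions of the boundary generators. So the shape of your argument is the right one.

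As a proof, however, it has a genuine gap --- in fact two, and you name them yourself. First, the ``master formula'' is asserted, not derived; it is precisely the analytic content of the theorem, resting on the tensorial description of the Turaev self-intersection map under a group-like expansion, and nothing in your text establishes it. Moreover its statement is imprecise as written: the defect is not the Chevalley--Eilenberg coboundary $d$ of an element of $|A|$ for the necklace Lie coalgebra (``infinitesimal twisting of $\delta^f_{\rm gr}$ by an element of $|A|$'' does not typecheck --- one twists a cobracket by an element of $\wedge^2|A|$, not of $|A|$); in \cite{highergenus} the defect is the operator $a\mapsto 1\wedge\{N,a\}$-type expression built from the double bracket applied to $N=j_f(F)-\mathbf{r}-|p|$, and getting its exact form, sign and normalization right is the whole point. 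Second, the identification of the kernel of that operator with $\sum_j|\K[[z_j]]|+|\K[[\xi]]|$ is again asserted by appeal to ``boundary curves are simple,'' which is a heuristic, not an argument; the actual computation is a separate nontrivial algebraic statement (cousin of Theorem~\ref{thm:center}, proved via results like Propositions~\ref{prop:A2} and~\ref{prop:A2sympl}). With both central steps deferred, what you have is a correct road map to the proof in \cite{highergenus}, not a proof.
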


Combining Theorem \ref{thm:main1} and Theorem \ref{thm:KVII}, we obtain the following result:

\begin{thm}
\label{thm:mainGTformal}
Let $\theta$ be a group-like expansion.
Then, $\theta$ induces a Lie bialgebra isomorphism between
$(\widehat{\mathfrak{g}},[\cdot,\cdot],\delta^f)$ and $({\rm gr}\, \widehat{\mathfrak{g}},[\cdot,\cdot]_{\rm gr},\delta^f_{\rm gr})$ if and only if there exists a tangential automorphism $F\in \taut$ such that $\theta = \theta_F = F^{-1} \circ \theta_{\exp}$,
\[
F(\omega) = e^{-\ell_0} \xi e^{\ell_0}
\quad \text{for some $\ell_0\in \widehat{L}$},
\tag{KV I'}
\]
and
\[
j_f(F) + \rot_f(|\gamma_0|)\, |\ell_0|
= {\bf r} + |p| + \sum_{j=1}^n |h_j(z_j)| - |h(\xi)|
\quad \text{for some $h_j,h\in \K[[s]]$}.
\tag{KV II'}
\]
\end{thm}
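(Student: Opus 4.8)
The plan is to derive the statement from the two results already in hand, Theorem~\ref{thm:main1} for the Goldman bracket and Theorem~\ref{thm:KVII} for the Turaev cobracket, the only genuinely new point being the passage between the strict boundary condition $F(\omega)=\xi$ of (KV I) and its conjugated form (KV I'). The device that connects them is an inner automorphism. Given $F\in\taut$ and $\ell_0\in\widehat L$, set
\[
\widetilde F:=\mathrm{Ad}_{e^{\ell_0}}\circ F,\qquad \mathrm{Ad}_{e^{\ell_0}}(a)=e^{\ell_0}ae^{-\ell_0}=\exp(\ad_{\ell_0})(a).
\]
Since $\mathrm{Ad}_{e^{\ell_0}}$ is tangential with data $(-\ell_0,\dots,-\ell_0)$ and satisfies $\gr\,\mathrm{Ad}_{e^{\ell_0}}=\mathrm{id}$, it lies in $\taut$, hence so does $\widetilde F$. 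Two facts make this useful. First, (KV I') says precisely that $\widetilde F(\omega)=e^{\ell_0}F(\omega)e^{-\ell_0}=\xi$, i.e.\ that $\widetilde F$ solves (KV I). Second, an inner automorphism acts as the identity on cyclic words, so $|\mathrm{Ad}_{e^{-\ell_0}}|=\mathrm{id}$ on $|A|$; consequently $\theta_{\widetilde F}=F^{-1}\circ\mathrm{Ad}_{e^{-\ell_0}}\circ\theta_{\exp}$ and $\theta_F=F^{-1}\circ\theta_{\exp}$ induce the very same filtered linear map $\widehat{\mathfrak g}\to\gr\,\widehat{\mathfrak g}$. In particular $\theta_F$ is a Lie bialgebra isomorphism if and only if $\theta_{\widetilde F}$ is.

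The technical core is the cocycle identity
\[
j_f(\mathrm{Ad}_{e^{\ell_0}})=\rot_f(|\gamma_0|)\,|\ell_0|.
\]
I would prove it by computing the divergence and framing contributions separately. For the divergence, take $u=\ad_{\ell_0}$; using $\pa_v(\ell_0 v)=\ell_0$ and $\pa_v(v\ell_0)=v\,\pa_v(\ell_0)$ for each generator $v$, together with cyclic invariance $|v\,\pa_v(\ell_0)|=|\pa_v(\ell_0)\,v|$, one finds
\[
\div(\ad_{\ell_0})=\Big|\,\sum_v\pa_v[\ell_0,v]\,\Big|=\Big|\,(2g+n)\,\ell_0-\sum_v v\,\pa_v(\ell_0)\,\Big|=(2g+n-1)\,|\ell_0|,
\]
the sum running over the $2g+n$ generators $x_i,y_i,z_j$; and because $\ad_{\ell_0}$ acts as $0$ on $|A|$, the formula $j(\exp u)=\frac{e^{u}-1}{u}\cdot\div(u)$ collapses to $j(\mathrm{Ad}_{e^{\ell_0}})=\div(\ad_{\ell_0})$. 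For the framing cocycle, $C_f$ integrates $c_f$ to $C_f(F)=\sum_j\rot_f(|\gamma_j|)\,|f_j|$, which on the data $f_j=-\ell_0$ gives $C_f(\mathrm{Ad}_{e^{\ell_0}})=-\sum_j\rot_f(|\gamma_j|)\,|\ell_0|$. Hence $j_f=j-C_f$ evaluates to $\big(2g+n-1+\sum_{j}\rot_f(|\gamma_j|)\big)|\ell_0|$, and the identity follows from the Gauss--Bonnet type relation $\rot_f(|\gamma_0|)=(2g+n-1)+\sum_{j=1}^n\rot_f(|\gamma_j|)$, in which $2g+n-1=-\chi(\Sigma)$.

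Both implications then fall out. Using the cocycle property and that $\mathrm{Ad}_{e^{\ell_0}}$ acts trivially on $|A|$, one has $j_f(\widetilde F)=j_f(F)+\rot_f(|\gamma_0|)|\ell_0|$, so (KV II) for $\widetilde F$ is literally (KV II') for $F$, with the same $h_j,h$. For the ``if'' direction, if $F$ satisfies (KV I') and (KV II') then $\widetilde F$ satisfies (KV I) and (KV II), whence $\theta_{\widetilde F}$---and therefore $\theta_F=\theta$---is a Lie bialgebra isomorphism by Theorem~\ref{thm:KVII}. For the ``only if'' direction, a Lie bialgebra isomorphism is in particular a Lie algebra isomorphism, so Theorem~\ref{thm:main1} gives that $\theta$ is tangential with $\theta(\gamma_0)=e^{m_0}e^{\omega}e^{-m_0}$; writing $\theta=\theta_F$ this yields $F\in\taut$ and $F(\omega)=e^{-\ell_0}\xi e^{\ell_0}$ with $\ell_0=F(m_0)$, that is (KV I'). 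Then $\widetilde F$ solves (KV I) and $\theta_{\widetilde F}$ is still a Lie bialgebra isomorphism, so Theorem~\ref{thm:KVII} yields (KV II) for $\widetilde F$, i.e.\ (KV II') for $F$.

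The step I expect to be hardest is the cocycle identity, and inside it the rotation-number relation: fixing the sign and orientation conventions for $\rot_f$ on the negatively oriented loop $\gamma_0$, confirming that $C_f$ integrates $c_f$ in the asserted closed form on inner automorphisms, and checking that the divergence computation is insensitive to the nonstandard weighting $\mathrm{wt}(z_j)=2$. The residual verifications---that $\widetilde F$ is tangential and that $\theta_{\widetilde F}$ and $\theta_F$ coincide on $\widehat{\mathfrak g}$---are routine.
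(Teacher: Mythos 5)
Your proposal is correct and follows essentially the same route as the paper's own proof: compute $j_f$ on inner automorphisms via the divergence of $\ad_{\ell_0}$ and the Poincar\'e--Hopf relation to get $j_f(\mathrm{Ad}_{e^{\ell_0}})=\rot_f(|\gamma_0|)\,|\ell_0|$, then use Theorem~\ref{thm:main1} to produce $\ell_0$ and reduce to the strict equations (KV~I), (KV~II) for the conjugated automorphism, invoking Theorem~\ref{thm:KVII}. Your $\widetilde F=\mathrm{Ad}_{e^{\ell_0}}\circ F$ is exactly the paper's $F'=F_{-\ell_0}\circ F$, all signs check out, and the only (harmless) difference is that you spell out the ``if'' direction which the paper leaves as symmetric.
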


\begin{rem}
Since $|\cdot |$ vanishes on commutators, the expression $|\ell_0|$ is of degree $1$.
We can ignore the terms in $ \ell_0$ proportional to $z_j$'s because they can be absorbed in the linear terms of functions $h_j$.
\end{rem}

\begin{proof}
First, we compute the cocycle $j_f$ on inner automorphisms.
For $\ell \in \widehat{L}$ let $u_\ell \in \tder^+$ be the inner derivation with generator $\ell$: $u_{\ell}(a)=[a,\ell]$ for any $a\in \widehat{L}$.
The element $F_\ell :=\exp(u_\ell) \in \taut$ is an inner automorphism given by conjugation by $e^{\ell}$: $F_\ell(a) = e^{-\ell} a e^\ell$.

We compute
\begin{align*}
    {\rm div}(u_\ell) &=
    \left| \sum_{i=1}^g ( \pa_{x_i}[x_i,\ell]+\pa_{y_i}[y_i,\ell]) + \sum_{j=1}^n  (\pa_{z_j}[z_j, \ell]) \right| \\
    &= \left| \sum_{i=1}^g (x_i (\pa_{x_i}\ell)-\ell + y_i (\pa_{y_i}\ell)-\ell) + \sum_{j=1}^n (z_j (\pa_{z_j}\ell) - \ell) \right| \\
    &= (1-2g-n) |\ell|.
\end{align*}
In the third line we used the cyclic invariance $|x_i (\pa_{x_i}\ell)|= |(\pa_{x_i}\ell) x_i|$ and formula \eqref{eq:pader} for $\ell$.
Also, we have $c_f(u_\ell)=(\sum_j {\rm rot}(|\gamma_j|) )|\ell|$.
Since $u_\ell$ acts trivially on the space $|A|$, integration yields
\begin{equation}
\label{eq:jqFh}
j_f(F_\ell) = ({\rm div} -c_f)(u_\ell)
=(1-2g-n-\sum_{j=1}^n {\rm rot}(|\gamma_j|))\, |\ell| = -\rot_f(|\gamma_0|)\, |\ell|.
\end{equation}
In the last equality we have used the Poincar\'e-Hopf theorem
\[
\sum_{j=1}^n \rot_f(|\gamma_j|) - \rot_f(|\gamma_0|) = \chi(\Sigma) = 1-2g-n.
\]

Now let $\theta$ be a group-like expansion and assume that it induces a Lie bialgebra isomorphism $\theta\colon \widehat{\mathfrak{g}} \to {\rm gr}\, \widehat{\mathfrak{g}}$.
By Theorem \ref{thm:main1}, there exists an element $F=(F,f_1,\ldots,f_n) \in \taut$ and a group-like element $g_0\in A$ such that $\theta=\theta_F = F^{-1} \circ \theta_{\exp}$ and $\theta(\gamma_0) = g_0 e^{\omega} {g_0}^{-1}$.
By setting $\ell_0:= \log F(g_0) \in \widehat{L}$ we obtain $F(\omega) = e^{-\ell_0} \xi e^{\ell_0}$, which implies (KV I').

By construction, the automorphism $F':=F_{-\ell_0}\circ F$ satisfies (KV I).
Since the action of $F_{-\ell_0}$ on $|A|$ is trivial, $\theta_{F'} = \theta$ on $\widehat{\mathfrak{g}}$.
By Theorem \ref{thm:KVII}, $F'$ satisfies (KV II).
This implies that $F$ satisfies (KV II'), since
\[
j_f(F') = j_f(F_{-\ell_0})+F_{-\ell_0}\cdot j_f(F)
=\rot_f(|\gamma_0|)\, |\ell_0| +j_f(F)
\]
by \eqref{eq:jqFh} and the fact that $F_{-\ell_0}$ acts trivially on $|A|$.
This completes the proof of ``only if'' part.
The other direction can be proved by the same method, so we omit it.
\end{proof}

\section{Free Lie algebras and cyclic words}
\label{sec:second}

In this section, we will prove several statements about conjugacy classes in free Lie algebras and their characterization in terms of cyclic words. These statements are the main technical content of the paper.

\subsection{PBW type decompositions}

Let $\gfg$ be a Lie algebra over a field $\K$ of characteristic zero.
Later we will take $\gfg$ to be the free Lie algebra 
over a finite dimensional $\K$-vector space.
By the PBW theorem, we have a natural decomposition
\begin{equation}
U(\gfg) = \bigoplus^\infty_{m=0} \sym^m\gfg,
\label{eq:PBW}
\end{equation}
where we denote by $\sym^m\gfg$ the $m$th symmetric power of the vector space
$\gfg$. The isomorphism \eqref{eq:PBW} is given by the maps 
$\sym^m\gfg \to U(\gfg)$,
$$
x_1x_2\cdots x_m \mapsto \dfrac{1}{m!}\sum_{\sigma\in\mathfrak{S}_m}
x_{\sigma(1)}x_{\sigma(2)}\cdots x_{\sigma(m)},
$$
for $m \geq 0$. Here $x_1x_2\cdots x_m$ stands for the symmetric tensor product 
of $x_1, x_2, \dots,  x_m \in \gfg$ (see, {\em e.g.} \cite{Bou71} Ch.\ 1, \S2, no.\ 7).
It should be remarked that the vector space $\sym^m\gfg$ is spanned by the set $\{x^m; \,\, x \in \gfg\}$. 
Now we consider the Lie algebra abelianization of the associative algebra
$U(\gfg)$,
$$
\vert U(\gfg)\vert = U(\gfg)/[U(\gfg), U(\gfg)].
$$
We denote the quotient map by 
$\vert \cdot\vert\colon U(\gfg) \to \vert U(\gfg)\vert$, 
$u \mapsto \vert u\vert$. 
The decomposition \eqref{eq:PBW} descends to abelianizations:
\begin{thm}\label{thm:assert} We have the direct sum decomposition 
\begin{equation}
\vert U(\gfg)\vert = \bigoplus^\infty_{m=0} \vert \sym^m\gfg\vert.
\label{eq:PBWtype}
\end{equation}
\end{thm}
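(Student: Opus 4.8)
The plan is to deduce \eqref{eq:PBWtype} from the single structural fact that the commutator subspace $[U(\gfg),U(\gfg)]$ is \emph{homogeneous} for the PBW decomposition \eqref{eq:PBW}. Granting this, if $U(\gfg)=\bigoplus_m\sym^m\gfg$ and $[U(\gfg),U(\gfg)]=\bigoplus_m\big([U(\gfg),U(\gfg)]\cap\sym^m\gfg\big)$, then the quotient splits as
\[
\vert U(\gfg)\vert=\bigoplus_{m=0}^\infty \sym^m\gfg\big/\big(\sym^m\gfg\cap[U(\gfg),U(\gfg)]\big),
\]
and each summand is exactly the image $\vert\sym^m\gfg\vert$ of $\sym^m\gfg$ under $\vert\cdot\vert$. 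Throughout I identify $\sym^m\gfg$ with its image in $U(\gfg)$ under the symmetrization map $\beta$ of \eqref{eq:PBW}; since $\beta(x^m)=x^m$, this image is simply $\mathrm{span}\{x^m : x\in\gfg\}\subseteq U(\gfg)$, using that $\sym^m\gfg$ is spanned by the $x^m$.

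First I would identify $[U(\gfg),U(\gfg)]$ with the image of the adjoint action. Set $W:=\mathrm{span}\{\ad_x(u) : x\in\gfg,\ u\in U(\gfg)\}$, where $\ad_x(u)=xu-ux$; clearly $W\subseteq[U(\gfg),U(\gfg)]$. The key point is the reverse inclusion. Writing $\overline{u}$ for the image of $u$ under the vector space projection $U(\gfg)\to U(\gfg)/W$, this amounts to $\overline{ab}=\overline{ba}$ for all $a,b$. I would prove it by induction on the filtration degree of $a$. The base case is $a=x\in\gfg$, where $\overline{xb}=\overline{bx}$ because $xb-bx=\ad_x(b)\in W$. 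For the inductive step, write $a=yc$ with $y\in\gfg$ and $c$ of strictly lower degree; then
\[
\overline{ab}=\overline{ycb}=\overline{cby}=\overline{byc}=\overline{ba},
\]
where the second equality moves the single generator $y$ using the base case applied to $cb$, and the third applies the inductive hypothesis to $c$ (with the factor $by$). Hence $[U(\gfg),U(\gfg)]=W$. The virtue of this argument is that it commutes only one generator past a time, so it never requires $W$ to be a two-sided ideal.

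Next I would show that $W$ is homogeneous for \eqref{eq:PBW}. Since $\ad_x$ is a derivation of $U(\gfg)$, on the spanning elements $y^m$ of $\sym^m\gfg$ one computes
\[
\ad_x(y^m)=\sum_{i=0}^{m-1}y^i\,[x,y]\,y^{m-1-i}=m\,\beta\big(y^{m-1}\,[x,y]\big),
\]
which lies in $\sym^m\gfg$ because $y^{m-1}[x,y]$ is a symmetric product of $m$ elements of $\gfg$. Thus each $\ad_x$ preserves the subspace $\sym^m\gfg\subseteq U(\gfg)$, and since $U(\gfg)=\bigoplus_m\sym^m\gfg$ we obtain $W=\sum_x\ad_x(U(\gfg))=\bigoplus_m\sum_x\ad_x(\sym^m\gfg)=\bigoplus_m\big(W\cap\sym^m\gfg\big)$. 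Combined with $[U(\gfg),U(\gfg)]=W$ from the previous step, this is precisely the homogeneity needed in the first paragraph, and the decomposition \eqref{eq:PBWtype} follows.

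I expect the main obstacle to be the reverse inclusion $[U(\gfg),U(\gfg)]\subseteq W$ in the second step: the naive strategy of pushing an entire factor across an element of $W$ secretly demands that $W$ be a two-sided ideal, which is cumbersome to verify head-on. Performing the induction on the degree of $a$ and transposing only one generator at a time is what sidesteps this difficulty; the derivation identity and the passage to the quotient are then routine.
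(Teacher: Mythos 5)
Your proposal is correct and follows essentially the same route as the paper: both reduce \eqref{eq:PBWtype} to the homogeneity of $[U(\gfg),U(\gfg)]$ with respect to \eqref{eq:PBW}, established via the identity $[U(\gfg),U(\gfg)]=[\gfg,U(\gfg)]$ together with the fact that the PBW symmetrization is a map of $\gfg$-modules. The only difference is that you prove these two standard facts in detail (by induction on filtration degree, and by the derivation computation on $y^m$), whereas the paper simply invokes them.
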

\begin{proof}
Since $\gfg$ generates $U(\gfg)$, we have $[U(\gfg), U(\gfg)] = [\gfg, U(\gfg)]$.
Recall that the decomposition \eqref{eq:PBW} is a $\gfg$-module decomposition.
Hence, we have 
$$
[\gfg, U(\gfg)] = \sum^\infty_{m=0} [\gfg, \sym^m\gfg]
\subset \sum^\infty_{m=0} ([\gfg, U(\gfg)] \cap \sym^m\gfg).
$$
This means that the subspace $[\gfg, U(\gfg)] = [U(\gfg), U(\gfg)]$ is homogeneous with respect to the decomposition \eqref{eq:PBW}, 
and this implies the direct sum decomposition in the theorem.
\end{proof}
\par
\bigskip
Let $V$ be a finite dimensional $\K$-vector space, 
$T=T(V) =\bigoplus^{\infty}_{m=0} V^{\otimes m}$
the tensor algebra over $V$,
and $L=L(V)$ the free Lie algebra over $V$.
If we denote by $\Delta\colon T\to T \otimes T$ the (standard) coproduct of the Hopf algebra structure on $T$, then $L$ is identified with the set of primitive elements, i.e., $L=\{ a\in T;\, \Delta a = a\otimes 1 + 1 \otimes a\}$, and we have $T=U(L)$.
For our purpose we need completions of $T$ and $L$;
we denote by
$\widehat{T} = \widehat{T}(V) = \prod^\infty_{m=0}V^{\otimes m}$
the completed tensor algebra over $V$ and
by $\widehat{L} = \widehat{L}(V)$ the completed free Lie algebra over $V$.

The Lie algebra $L$ admits a grading with finite dimensional graded components given by tensor powers of $V$:
$L = \bigoplus_{q=0}^\infty (L \cap V^{\otimes q})$.
This implies that decompositions \eqref{eq:PBW} and \eqref{eq:PBWtype} extend to $\widehat{L}$ (with direct sums replaced by direct products):
$$
\widehat{T}=U(\widehat{L}) = \prod_{m=0}^\infty \sym^m \widehat{L}, \hskip 0.3cm
\vert \widehat{T} \vert = \prod_{m=0}^\infty \vert \sym^m \widehat{L} \vert .
$$
This observation has the following interesting corollaries:

\begin{thm}\label{thm:dec}
Let $u$ and $v \in \widehat{L}$ such that
$
\vert \exp(u) \vert = \vert \exp(v)\vert \in \vert \widehat{T} \vert .
$
Then,
$$
\vert u^m\vert = \vert v^m\vert \in \vert \widehat{T} \vert
$$
for all $m \geq 0$. 
\end{thm}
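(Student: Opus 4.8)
The plan is to compute the homogeneous components of $|\exp(u)|$ and $|\exp(v)|$ with respect to the PBW-type decomposition $|\widehat{T}| = \prod_{m=0}^\infty |\sym^m\widehat{L}|$ and to compare them one degree at a time. The whole argument rests on a single observation about where associative powers of a Lie element sit in the PBW grading.

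First I would verify that for $u \in \widehat{L}$ the associative power $u^m \in \widehat{T} = U(\widehat{L})$ is homogeneous of PBW-degree $m$, i.e.\ it lies in the image of $\sym^m\widehat{L}$ under the isomorphism \eqref{eq:PBW}. This is immediate from the explicit form of the PBW map: the symmetric product $u\,u\cdots u$ of $m$ equal copies of $u$ is sent to
\[
\frac{1}{m!}\sum_{\sigma\in\mathfrak{S}_m} u\,u\cdots u = \frac{1}{m!}\cdot m!\cdot u^m = u^m ,
\]
so the associative power $u^m$ is exactly the image of the PBW generator $u^m \in \sym^m\widehat{L}$. Consequently the defining series $\exp(u) = \sum_{m\ge 0} \tfrac{1}{m!}\,u^m$ is literally the PBW decomposition of $\exp(u)$: viewed in $\widehat{T} = \prod_{m}\sym^m\widehat{L}$, its component in $\sym^m\widehat{L}$ is precisely $\tfrac{1}{m!}u^m$.

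Next I would push this through the abelianization. By Theorem \ref{thm:assert} in the completed form $|\widehat{T}| = \prod_{m=0}^\infty |\sym^m\widehat{L}|$ recorded in the excerpt, the projection $|\cdot|$ respects the decomposition, mapping the $\sym^m\widehat{L}$-component to the $|\sym^m\widehat{L}|$-component. Hence the $m$-th component of $|\exp(u)|$ equals $\tfrac{1}{m!}|u^m| \in |\sym^m\widehat{L}|$, and similarly the $m$-th component of $|\exp(v)|$ equals $\tfrac{1}{m!}|v^m|$. The hypothesis $|\exp(u)| = |\exp(v)|$ together with the directness (hence componentwise uniqueness) of the product decomposition then forces $\tfrac{1}{m!}|u^m| = \tfrac{1}{m!}|v^m|$ for every $m$, which gives $|u^m| = |v^m|$ as claimed.

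The only genuinely substantive point is the first step, the identification of $u^m$ as the PBW-homogeneous piece of degree $m$; everything afterwards is a formal comparison of components. I would be mildly careful only about the compatibility of the PBW isomorphism with the completions, so that the series $\sum_m \tfrac{1}{m!}u^m$ really does read off the components of $\exp(u)$ in $\prod_m \sym^m\widehat{L}$ — but this is exactly the extension of the decomposition to $\widehat{L}$ already granted in the excerpt, and the distinct tensor-degree grading ensures each $\sym^m\widehat{L}$-component is a well-defined convergent element. I do not expect any real obstacle beyond keeping these two gradings (tensor degree versus symmetric-power degree) conceptually separate.
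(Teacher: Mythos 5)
Your proposal is correct and is essentially the paper's own argument: the authors likewise read off the $\vert\sym^m\widehat{L}\vert$-component of $\vert\exp(u)\vert$ as $\tfrac{1}{m!}\vert u^m\vert$ and invoke the directness of the decomposition \eqref{eq:PBWtype} to compare term by term. The one step you spell out in detail --- that the associative power $u^m$ is exactly the PBW image of the symmetric tensor $u^m\in\sym^m\widehat{L}$ --- is the same observation the paper records just after \eqref{eq:PBW}, namely that $\sym^m\gfg$ is spanned by the elements $x^m$.
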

\begin{proof} 
We have,
$$
\vert \exp(u) \vert = \sum_{m=0}^\infty \frac{1}{m!} \vert u^m\vert = \sum_{m=0}^\infty \frac{1}{m!} \vert v^m\vert =
\vert \exp(v) \vert.
$$
By decomposition \eqref{eq:PBWtype} for the Lie algebra $\widehat{L}$, this implies that the series in the middle are equal term by term:
$$
\vert u^m \vert = \vert v^m \vert,
$$
as required.
\end{proof}

Similarly, one can prove the following statement:
\begin{thm}\label{thm:dec2}
Let $u$ and $v_0, \dots, v_n \in \widehat{L}$ satisfy 
$
\vert \exp(u)\vert \in \left\vert\sum^n_{j=0}\K[[v_j]]\right\vert.
$
Then, we have
$
\vert u^m\vert \in \sum^n_{j=0}\K\vert {v_j}^m\vert
$
for all $m \geq 0$.
\end{thm}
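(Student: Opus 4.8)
The plan is to mimic the proof of Theorem~\ref{thm:dec}, replacing the single series $|\exp(v)|$ by a finite sum of series in the elements $v_j$ and then extracting information one symmetric-power component at a time. First I would unwind the hypothesis: the statement $|\exp(u)| \in \big|\sum_{j=0}^n \K[[v_j]]\big|$ means precisely that there exist formal power series $f_j(s) = \sum_{m\ge 0} c_{j,m}\, s^m \in \K[[s]]$ such that
\[
|\exp(u)| = \sum_{j=0}^n |f_j(v_j)| = \sum_{j=0}^n \sum_{m=0}^\infty c_{j,m}\, |v_j^m| \in |\widehat{T}|.
\]

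The crucial observation, exactly as in the previous proofs, is that every power $|v_j^m|$ (and likewise $|u^m|$) lies in the single homogeneous component $|\sym^m \widehat{L}|$ of the decomposition \eqref{eq:PBWtype}, namely $|\widehat{T}| = \prod_{m=0}^\infty |\sym^m \widehat{L}|$. Indeed, since $v_j \in \widehat{L}$, the remark that $\sym^m\widehat{L}$ is spanned by $m$-th powers, together with the explicit PBW map $x^m \mapsto x^m$, identifies the associative power $v_j^m \in \widehat{T}$ with the image of the element $v_j^m \in \sym^m \widehat{L}$. Hence each of the two sides above is already presented in its decomposition relative to \eqref{eq:PBWtype}.

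Next I would compare both sides degree by degree. On the left, expanding the exponential gives $|\exp(u)| = \sum_{m=0}^\infty \tfrac{1}{m!}|u^m|$, whose $m$-th component is $\tfrac{1}{m!}|u^m|$; on the right the $m$-th component is $\sum_{j=0}^n c_{j,m}\,|v_j^m|$. Since \eqref{eq:PBWtype} is a direct product decomposition, components of distinct degrees cannot interfere, so I may equate them in each fixed degree:
\[
\frac{1}{m!}\,|u^m| = \sum_{j=0}^n c_{j,m}\,|v_j^m| \in \sum_{j=0}^n \K\,|v_j^m|.
\]
Multiplying by $m!$ then yields $|u^m| \in \sum_{j=0}^n \K\,|v_j^m|$ for every $m \ge 0$, which is the desired conclusion.

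There is essentially no hard step here; the argument is a direct adaptation of Theorem~\ref{thm:dec}, the only novelty being that the right-hand side is now a $\K$-linear combination of the several homogeneous pieces $|v_0^m|,\dots,|v_n^m|$ rather than a single one. The one point demanding care is verifying that each $|v_j^m|$ is genuinely homogeneous of PBW-degree $m$, so that the term-by-term comparison across the product decomposition is legitimate; this homogeneity is exactly what allows a statement about a sum of full power series to be refined into a statement about each fixed power $m$ separately.
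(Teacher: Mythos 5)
Your proof is correct and follows essentially the same route as the paper: both arguments rest on the observation that $v_j^m$ (and $u^m$) lie in the component $\vert\sym^m\widehat{L}\vert$ of the decomposition \eqref{eq:PBWtype}, so that the hypothesis can be compared component by component. The paper phrases this as the identity $\left\vert\sum_{j}\K[[v_j]]\right\vert\cap\vert\sym^m\widehat{L}\vert=\sum_{j}\K\vert v_j^m\vert$, which is exactly what your expansion into the coefficients $c_{j,m}$ establishes.
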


\begin{proof} 
Observe that 
$$
\left\vert \sum^n_{j=0}\K[[v_j]] \right\vert \cap \left\vert
\sym^m \widehat{L} \right\vert = \sum^n_{j=0}\K\vert {v_j}^m\vert .
$$
The component of $\vert \exp(u) \vert$ in $\left\vert
\sym^m \widehat{L} \right\vert$ is $\vert u^m \vert/m!$. Hence, we conclude 
$$
\vert u^m \vert \in \sum^n_{j=0}\K\vert {v_j}^m\vert,
$$
as required.
\end{proof}

\subsection{Conjugacy theorems}

This subsection is devoted to two conjugacy theorems.
As in the preceding sections, for a $\K$-vector space $V$, 
we denote by $\widehat{T}=\widehat{T}(V)$ the completed tensor algebra over $V$, and by
$\widehat{L}=\widehat{L}(V)$ the completed free Lie algebra over $V$.

We are now ready to formulate the main technical results of the paper:

\begin{thm}
\label{thm:non-conn}
Let $V$ be a finite dimensional $\K$-vector space.
Suppose that 
an element $z\in \widehat{L}$ has non-trivial linear term, 
$z \in \widehat{L} \setminus\prod_{l>1}( \widehat{L} \cap V^{\otimes l})$, 
and that another element $a \in \widehat{L}$ satisfies
$\vert\exp(a)\vert = \vert\exp(z)\vert \in \vert \widehat{T}\vert$.
Then we have 
$a = gzg^{-1}$ for some group-like element $g \in \exp(\widehat{L})$.
\end{thm}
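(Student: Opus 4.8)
The plan is to first strip the hypothesis down to its essential content and then build the conjugating element $g$ by successive approximation, one degree at a time. By Theorem~\ref{thm:dec} applied to $u=a$, $v=z$, the assumption $|\exp(a)|=|\exp(z)|$ upgrades to $|a^m|=|z^m|$ for all $m\ge0$. Taking $m=1$ and restricting to degree $1$ (where $\widehat{L}\cap V^{\otimes 1}=V$ injects into $|\widehat{T}|$) shows that $a$ and $z$ have the same linear term $x:=z_1=a_1\in V$, which is nonzero by hypothesis. I would then construct $g$ as a convergent infinite product $g=\cdots\exp(s_2)\exp(s_1)$ with $s_N\in\widehat{L}\cap V^{\otimes N}$; since the degrees of the factors strictly increase, the product converges in $\exp(\widehat{L})$ to a group-like element.

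For the inductive step, assume that, after conjugating $z$ by the partial product, we have arranged $a\equiv z$ modulo degree $\le N$, and let $d\in\widehat{L}\cap V^{\otimes(N+1)}$ be the leading discrepancy. Conjugating $z$ by $\exp(s)$ with $s\in\widehat{L}\cap V^{\otimes N}$ gives $e^{\ad s}z=z+[s,x]+(\text{degree}\ge N+2)$, because $[s,z_{\ge2}]$ and the iterated brackets $\tfrac12[s,[s,z]],\dots$ all raise the degree past $N+1$ (here $z=x+z_{\ge2}$). Hence the step closes precisely when $d$ lies in the image of $\ad x$ in degree $N+1$, i.e. $d=[s,x]$ for some $s\in\widehat{L}\cap V^{\otimes N}$.

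To see that $d$ is so expressible, I would extract the constraint hidden in $|a^m|=|z^m|$. Writing $a=z+d+(\text{higher})$, the only contribution to $a^m-z^m$ in degree $m+N$ is $\sum_{i=0}^{m-1}x^i d\,x^{m-1-i}$ (all other terms, involving a second $d$ or a factor $z_{\ge2}$, have strictly higher degree); by cyclic invariance the $m$ summands become equal in $|\widehat{T}|$, so the degree-$(m+N)$ component of $|a^m|-|z^m|=0$ reads $m\,|d\,x^{m-1}|=0$. Thus $|d\,x^{k}|=0$ for all $k\ge0$. The decisive input is then the characterization of $[\widehat{L},x]$ by cyclic words: for homogeneous $d\in\widehat{L}$ of degree $\ge2$ and $0\ne x\in V$,
\[
|d\,x^{k}|=0 \ \text{ for all } k\ge0 \quad\Longleftrightarrow\quad d\in[\widehat{L},x].
\]
This equivalence is the main obstacle. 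The ``$\Leftarrow$'' direction is a one-line cyclicity computation, $|[w,x]x^{k}|=|wx^{k+1}|-|xwx^{k}|=0$. For ``$\Rightarrow$'', I would use Lazard elimination to write $\widehat{L}=\K x\ltimes\widehat{\mathfrak{b}}$, where $\mathfrak{b}$ is the free Lie algebra on the shifted generators $b_{k,v}=(\ad x)^k v$ ($v$ in a complement $V'$ of $\K x$) and $\ad x$ acts on $\mathfrak{b}$ as the derivation extending $b_{k,v}\mapsto b_{k+1,v}$; since $\ad x$ kills $x$, one has $[\widehat{L},x]=\ad x(\widehat{\mathfrak{b}})$. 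Grading $\mathfrak{b}$ by $V'$-degree (preserved by $\ad x$), the $V'$-degree-one piece is handled by $|b_{k,v}x^{j}|=(1+(-1))^{k}\,|v\,x^{k+j}|$, which vanishes exactly for $k\ge1$, so the functionals $|\cdot\,x^{j}|$ detect its cokernel $V'=\{b_{0,v}\}$; I expect the higher $V'$-degree pieces to follow by induction on $V'$-degree, using that $\ad x$ is a derivation. The generating-function repackaging $\sum_{k\ge0}|d\,x^{k}|\,t^{k}=\bigl|d\,(1-tx)^{-1}\bigr|$ turns this into a single vanishing condition and is convenient for the induction.

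Granting the equivalence, each inductive step produces $s_N$ with $[s_N,x]=d$; conjugating and iterating yields the group-like $g\in\exp(\widehat{L})$ with $gzg^{-1}=a$, which completes the proof. The genuinely delicate point, and the one I expect to be the crux, is the ``$\Rightarrow$'' direction above, namely that the cyclic-word functionals $|\cdot\,x^{k}|$ cut out exactly the image of $\ad x$.
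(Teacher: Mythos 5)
Your overall architecture coincides with the paper's: apply Theorem \ref{thm:dec} to get $|a^m|=|z^m|$, then kill the discrepancy degree by degree by conjugating with $\exp(s_N)$, the whole argument reducing to the single claim that a homogeneous $d\in\widehat{L}$ of degree $\geq 2$ with $|d\,x^k|=0$ for all $k$ lies in $[x,\widehat{L}]$. (Your decision to keep $z=x+z_{\ge2}$ and argue by degree counting, instead of the paper's preliminary reduction to $z\in V$ via a filtered automorphism of $\widehat{L}$, is a harmless variation; the degree bookkeeping you give is correct.)

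The gap is exactly where you say you expect it: the ``$\Rightarrow$'' direction of your equivalence is not proved. What you need is the conjunction of two statements that the paper keeps separate: (i) $|d\,x^k|=0$ for all $k$ implies $d\in[x,\widehat{T}]$, which is Proposition \ref{prop:A2}, imported from the appendix of \cite{genus0}; and (ii) if $[x,b]\in\widehat{L}$ for $b\in\widehat{T}$, then $b\in\widehat{L}+\K[[x]]$, so the bracketing element can be chosen in the Lie algebra --- this is Lemma \ref{lem:n3}, whose proof via Lemma \ref{lem:n2} is a nontrivial coproduct computation. Point (ii) is essential (without it the conjugator $g$ need not be group-like), and your sketch folds it into the equivalence without addressing it as a separate difficulty. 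Your Lazard-elimination plan establishes the claim only on the $V'$-degree-one part of $L(W)$; for $V'$-degree $\ge 2$ the assertion becomes that the functionals $|\cdot\,x^j|$ cut out exactly the image of the derivation $D\colon b_{k,v}\mapsto b_{k+1,v}$ on each homogeneous component of the free Lie algebra on the infinite alphabet $\{b_{k,v}\}$, and the proposed induction does not obviously close: the derivation property relates $D[\ell_1,\ell_2]$ to $D\ell_1$ and $D\ell_2$, but $|[\ell_1,\ell_2]\,x^j|$ does not factor through the functionals $|\ell_i\,x^{j_i}|$, so the inductive hypothesis has no evident purchase. Either carry out that combinatorial argument in full, or do as the paper does: invoke Proposition \ref{prop:A2} and then upgrade to the Lie algebra via Lemmas \ref{lem:n2} and \ref{lem:n3}.
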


\begin{thm}
\label{thm:sympl}
Let $V$ be a finite dimensional $\K$-symplectic vector space, 
whose symplectic form we denote by $\omegaz \in \wedge^2 V$.
Suppose that an element $a \in \widehat{L}$ satisfies
$\vert\exp(a)\vert = \vert\exp(\omegaz)\vert \in \vert \widehat{T}\vert$.
Then we have 
$a = g\omegaz g^{-1}$ for some group-like element $g \in \exp(\widehat{L})$.
\end{thm}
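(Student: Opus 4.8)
The plan is to convert the single hypothesis $|\exp(a)| = |\exp(\omegaz)|$ into an infinite family of cyclic-word identities and then to kill the discrepancy between $a$ and $\omegaz$ one homogeneous degree at a time by inner conjugation. Note first that Theorem \ref{thm:non-conn} does not apply directly, since $\omegaz$ has no linear term; the degree-two nature of $\omegaz$ is precisely what must be overcome. I would begin by applying Theorem \ref{thm:dec} to obtain
\[
|a^m| = |\omegaz^m| \in |\widehat{T}| \qquad \text{for all } m \geq 0 .
\]
The case $m=1$ gives $|a| = |\omegaz| = 0$; since the projection $|\cdot|$ restricts to an isomorphism on degree-one elements and annihilates all brackets, $|a|$ equals the linear part of $a$, so $a$ has no linear term and $a = a_2 + a_3 + \cdots$ with $a_q \in \widehat{L}\cap V^{\otimes q}$ and $a_2 \in \wedge^2 V$.

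The base case is to show $a_2 = \omegaz$. Comparing the lowest-degree (degree $2m$) components of the moment identities yields $|a_2^m| = |\omegaz^m|$ for every $m$, which is a finite-dimensional invariant-theoretic statement about $\wedge^2 V$. Here the nondegeneracy of $\omegaz$ is essential: the even moments force $a_2$ to be a nondegenerate bivector lying in the $GL(V)$-orbit of $\omegaz$, while the odd moments remove the sign/scaling ambiguity (e.g.\ they exclude $a_2 = -\omegaz$, since $|\omegaz^{2k+1}| \neq 0$), and nondegeneracy cuts the common stabiliser down to $\mathrm{Sp}(V)$, which fixes $\omegaz$. I would isolate this as a separate lemma; it is genuinely nonlinear, because $a_2$ occupies the same degree as $\omegaz$ and hence cannot be moved by any inner automorphism.

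With $a_2 = \omegaz$ established, I would run an induction building a convergent infinite product of inner conjugations. Assume that after conjugating we have arranged $b = \omegaz + b_n + (\text{higher degree})$ for some $n \geq 3$, where $b$ is conjugate to $a$ and therefore still satisfies $|b^m| = |\omegaz^m|$ (these identities are conjugation invariant), and $b_n \in \widehat{L}\cap V^{\otimes n}$. The key extraction is that the lowest new contribution of $b_n$ to $|b^m|$ sits in degree $2(m-1)+n$ and equals $m\,|\omegaz^{\,m-1} b_n|$, whereas $|\omegaz^m|$ is concentrated in degree $2m < 2(m-1)+n$, and terms involving two or more copies of $b_n$ appear only in strictly higher degree. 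The moment identities therefore force
\[
|\omegaz^{\,k}\, b_n| = 0 \qquad \text{for all } k \geq 0 .
\]

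The crucial step — and the one I expect to be the main obstacle — is to turn this vanishing into membership in the image of $\ad_{\omegaz}$, which is exactly the content of Proposition \ref{prop:aomega}: for a Lie element $c$, the vanishing of all pairings $|\omegaz^{\,k} c|$ implies $c \in [\omegaz, \widehat{L}] = \mathrm{im}(\ad_{\omegaz})$. This is where nondegeneracy of $\omegaz$ is indispensable, since it makes $\ad_{\omegaz}$ as surjective as possible, with cokernel detected precisely by cyclic pairings against powers of $\omegaz$ (a Lefschetz-type $\mathfrak{sl}_2$ phenomenon). Granting this, I pick $\xi_{n-2} \in \widehat{L}\cap V^{\otimes (n-2)}$ with $[\xi_{n-2},\omegaz] = b_n$ and replace $b$ by $e^{-\ad_{\xi_{n-2}}}(b)$, whose degree-$n$ component $b_n - [\xi_{n-2},\omegaz]$ now vanishes while lower degrees and the moment identities are preserved. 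Iterating over $n$ and taking the infinite product of the resulting inner automorphisms — which converges in the degree-completed setting because the exponents have strictly increasing degree — produces a group-like $g \in \exp(\widehat{L})$ with $a = g\,\omegaz\,g^{-1}$, as required.
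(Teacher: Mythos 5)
Your strategy is the same as the paper's: the paper proves Theorem \ref{thm:sympl} by running the induction of Theorem \ref{thm:non-conn} with Proposition \ref{prop:A2sympl} in place of Proposition \ref{prop:A2}, and your reduction to the moment identities via Theorem \ref{thm:dec}, the degree-by-degree conjugation, and the convergent infinite product all match. There are, however, two genuine gaps in your write-up.

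The first is the base case $a_2=\omegaz$. You correctly observe that this step is nonlinear and cannot be reached by any inner conjugation, but the argument you offer for it does not work. The identities $|a_2^m|=|\omegaz^m|$ are equalities of \emph{tensors} in $|V^{\otimes 2m}|$, not of scalar invariants, so the orbit--stabiliser language (``even moments force $a_2$ into the $GL(V)$-orbit of $\omegaz$'', ``nondegeneracy cuts the stabiliser down to $\mathrm{Sp}(V)$'') does not apply: knowing that $a_2$ is nondegenerate, hence $GL(V)$-conjugate to $\omegaz$, and that some collection of invariants agree still leaves an entire $\mathrm{Sp}(V)\backslash GL(V)$ worth of candidates, and a stabiliser computation provides no mechanism to exclude them. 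What is actually needed is an argument in the style of Section \ref{sec:Proposition_3_10}: apply contractions such as $1_V^{\otimes 2}\otimes C^{\otimes (m-1)}$ to $|a_2^m|-|\omegaz^m|=0$, separate the aligned and misaligned cyclic rotations, and use a Vandermonde argument in $m$ as in the proof of Proposition \ref{prop:QHQ}. As written, your ``separate lemma'' is asserted rather than proved, and it is the one place where the symplectic case genuinely differs from Theorem \ref{thm:non-conn}.

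The second gap is in the inductive step: you quote Proposition \ref{prop:aomega} as producing $\xi_{n-2}\in\widehat{L}\cap V^{\otimes (n-2)}$ with $[\xi_{n-2},\omegaz]=b_n$, but the proposition only yields an element of $V^{\otimes (n-2)}\subset\widehat{T}$. You need $\xi_{n-2}$ to be a Lie element for $e^{-\ad_{\xi_{n-2}}}$ to be conjugation by a group-like element. This is precisely why the paper proves Lemma \ref{lem:n3} for $z\in\wedge^2V\setminus\{0\}$ as well as for $z\in V\setminus\{0\}$: from $[\omegaz,\xi]=b_n\in\widehat{L}$ one deduces $\xi\in\widehat{L}+\K[[\omegaz]]$, and the $\K[[\omegaz]]$ part commutes with $\omegaz$ and may be discarded. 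The repair is routine once Lemma \ref{lem:n3} is available, but your proposal silently assumes a stronger conclusion of Proposition \ref{prop:aomega} than it actually provides.
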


To prove these theorems we need some preliminary lemmas.
Let $V$ be a finite dimensional $\K$-vector space.
\par
\begin{lem}\label{lem:n2}
Let $z$ be an element of the sets $V\setminus\{0\}$ or 
$\wedge^2V\setminus\{0\}$. Then, we have 
$$
\{u \in \widehat{T}\otimes \widehat{T}; \,\, [\Delta z, u] = 0\} = \K[[z]]\otimes\K[[z]].
$$
\end{lem}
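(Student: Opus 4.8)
\emph{Setup and reduction.} First I would note that in both cases $z$ is a \emph{primitive} element of $\widehat{T}$: vectors in $V$ are primitive, and an element of $\wedge^2 V$, viewed as a degree-two bracket $[v,w]=vw-wv\in\widehat{L}$, satisfies $\Delta[v,w]=[v,w]\otimes 1+1\otimes[v,w]$. Hence $\Delta z=z\otimes 1+1\otimes z$, and the condition $[\Delta z,u]=0$ becomes $Du=0$ for the derivation $D:=\ad z\otimes 1+1\otimes\ad z$ of $\widehat{T}\otimes\widehat{T}$. The inclusion $\K[[z]]\otimes\K[[z]]\subseteq\ker D$ is immediate, since $z\otimes 1$ and $1\otimes z$ commute with every $z^p\otimes z^q$; the content is the reverse inclusion. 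Writing $d:=\deg z\in\{1,2\}$ for the tensor degree of $z$, the operator $D$ raises total tensor degree by \emph{exactly} $d$, so $\ker D=\prod_N\ker(D|_{S_N})$ and I may assume $u$ homogeneous of total tensor degree $N$. I would then record two facts about the single operator $\ad z$ on $\widehat{T}$: \emph{(i)} its kernel is exactly $\K[[z]]$, i.e. $\ker(\ad z)\cap V^{\otimes m}=\K z^{m/d}$ when $d\mid m$ and $0$ otherwise; \emph{(ii)} $z^k\notin\mathrm{Im}(\ad z)$ for all $k\ge 1$. Granting (i), kernels taken factorwise give $\ker(\ad z\otimes 1)=\K[[z]]\otimes\widehat{T}$ and $\ker(1\otimes\ad z)=\widehat{T}\otimes\K[[z]]$, whose intersection is precisely the asserted centralizer, so it remains to show $\ker D$ is no larger than this intersection.

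\emph{The zig-zag induction.} Decompose $u=\sum_{p+q=N}u_{p,q}$ into tensor-bidegree components and set $v_q:=u_{N-q,q}$. Comparing bidegrees in $Du=0$ yields the chain
\[
(\ad z\otimes 1)\,v_q=-(1\otimes\ad z)\,v_{q-d}\qquad(0\le q\le N),
\]
with $v_{<0}:=0$, together with the top relations $(1\otimes\ad z)v_q=0$ for $q>N-d$. I would induct on $q$ in steps of $d$. For $q<d$ the chain reads $(\ad z\otimes 1)v_q=0$, so by (i) the first tensor factor of $v_q$ is a power of $z$, i.e. $v_q=z^{(N-q)/d}\otimes w_q$ (and $v_q=0$ when $d\nmid N-q$). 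Inductively, once $v_{q-d}=z^{(N-q+d)/d}\otimes z^{(q-d)/d}$, the right-hand side above vanishes because $[z,z^{m}]=0$, forcing again $v_q=z^{(N-q)/d}\otimes w_q$ with $w_q\in V^{\otimes q}$ still undetermined. To pin down $w_q$ I feed $v_q$ into the \emph{next} relation: its left side $(\ad z\otimes 1)v_{q+d}$ lies in $\mathrm{Im}(\ad z)\otimes V^{\otimes(q+d)}$, yet it equals $-\,z^{(N-q)/d}\otimes[z,w_q]$; since $z^{(N-q)/d}\notin\mathrm{Im}(\ad z)$ by (ii), this forces $[z,w_q]=0$, and then (i) gives $w_q\in\K z^{q/d}$. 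The finitely many top-band indices $q>N-d$ are handled identically using the boundary relations $(1\otimes\ad z)v_q=0$. Thus each $v_q\in\K\,z^{(N-q)/d}\otimes z^{q/d}$, i.e. $u\in\K[[z]]\otimes\K[[z]]$.

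\emph{The two facts and the main obstacle.} Fact (ii) is easy: for $k=1$ the image of $\ad z$ in tensor degree $d$ is $(\ad z)(\K)=0$, so $z\notin\mathrm{Im}(\ad z)$; for $k\ge 2$ it suffices that $|z^k|\ne 0$ in $|\widehat{T}|$ (since $\mathrm{Im}(\ad z)\subseteq[\widehat{T},\widehat{T}]$), which I would see by computing $\mathrm{tr}(\rho(z)^k)$ in a finite-dimensional representation $\rho$ of $\widehat{T}$ in which $z$ acts with non-vanishing power-trace (for $z\in V$ one may instead just note that the cyclic word $|z^k|$ is nonzero). The genuine work is Fact (i): the centralizer of $z$ in the free algebra equals $\K[[z]]$. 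For $z\in V$ this is elementary. For a nonzero bivector $z\in\wedge^2 V$ I would invoke Bergman's centralizer theorem — the centralizer of any non-scalar element of a free associative algebra is a polynomial ring $\K[f]$ — combined with a homogeneity argument: as $z$ is homogeneous of degree $2$ and cannot be a power of a degree-one element (its antisymmetric part would have to come from a symmetric square), the generator $f$ may be taken to be $z$, giving centralizer $\K[z]$, whose completion is $\K[[z]]$. Establishing Fact (i) for general bivectors is the main obstacle; everything else is the bookkeeping of the bidegree induction above.
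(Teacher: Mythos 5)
Your argument is correct, and the bookkeeping is essentially the paper's: both proofs expand $u$ along the second tensor factor and run an induction that forces each slice into $\K[[z]]\otimes\K[[z]]$. The genuine difference is the free-algebra input driving that induction. The paper uses the single ``normalizer'' statement \eqref{eq:norm}, $\{a\in\widehat{T}:[z,a]\in\K[[z]]\}=\K[[z]]$, quoted from \cite[Proposition 5.6]{highergenus} (where it is proved by the elementary ``reduced element'' machinery), and applies it at every inductive step to upgrade $[z,u_{i_1\dots i_k}]\in\K[[z]]$ to $u_{i_1\dots i_k}\in\K[[z]]$. You replace this by the equivalent package of (i) $\ker(\ad z)=\K[[z]]$ and (ii) $z^k\notin\mathrm{Im}(\ad z)$, using (ii) to show the relevant brackets actually vanish and then (i) to conclude; for homogeneous elements your (i)+(ii) and the paper's \eqref{eq:norm} imply each other, so the strength is the same. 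What each route buys: the paper's is self-contained and reuses a lemma it needs anyway (the same \eqref{eq:norm} powers Lemma \ref{lem:n3}); yours outsources the centralizer computation to Bergman's centralizer theorem, a much heavier but standard citation, and your leading-term reduction (every homogeneous element of the graded subalgebra $\K[f]$ is a scalar multiple of a power of the lowest homogeneous component of $f$, which cannot have degree $1$ since a nonzero element of $\wedge^2V$ is not a symmetric square) correctly pins the generator down to $z$. Two details deserve to be written out: that reduction itself, and the nonvanishing $|z^k|\ne 0$ for $k\ge 2$ when $z$ is a possibly \emph{degenerate} bivector --- your trace-of-a-representation suggestion works but needs an explicit representation for odd $k$; it is cleaner to put $z$ in rank normal form and pair against powers of the dual form, exactly as the paper does in the $m=0$ case of the proof of Proposition \ref{prop:aomega}. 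Neither point is a gap in the logic, only in the level of detail.
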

\begin{proof}
It suffices to show that the LHS is included in the RHS. 
As was proved in Proposition 5.6, \cite{highergenus}, we have
\begin{equation}
\{a \in \widehat{T}; \,\, [z, a] \in \K[[z]]\} = \K[[z]].
\label{eq:norm}
\end{equation}
In fact, the element $z$ is {\it reduced} in the sense of \S5.2 in \cite{highergenus}. 
\par
Choose a basis $\{x_1,\dots, x_n\}$ of $V$
(with no relation to $z$).
Let $u$ be an element of the LHS. 
We may assume that under the grading defined by powers of $V$ it is homogeneous of some degree $m \geq 0$. 
One can write uniquely
$$
u = u_0\otimes 1 + \sum^m_{k=1}u_{i_1\dots i_k}\otimes x_{i_1}\dots x_{i_k}, \quad
u_0 \in V^{\otimes m},\,\,u_{i_1\dots i_k} \in V^{\otimes (m-k)}.
$$
Then, since $z$ is primitive, we have 
\begin{equation}
0 = [\Delta z, u] = [z, u_0]\otimes 1 + 
\sum^m_{k=1}([z, u_{i_1\dots i_k}]\otimes x_{i_1}\dots x_{i_k}
+ u_{i_1\dots i_k}\otimes [z, x_{i_1}\dots x_{i_k}]).
\label{eq:eqn2}
\end{equation}

\par
We claim that $u_{i_1\dots i_k} \in \K[[z]]$ for all $k$ and $i_1\dots i_k$.
First consider the case $z \in V\setminus\{0\}$. 
Then, equation \eqref{eq:eqn2} is equivalent to the following family of equations
\begin{eqnarray*}
&&0 = [z, u_0] \otimes 1, \\
&&0 = \sum_{i_1}[z,u_{i_1}] \otimes x_{i_1} \quad\text{and} \\
&&0 = \sum_{i_1,\dots,i_k}[z, u_{i_1\dots i_k}]\otimes x_{i_1}\dots x_{i_k}
+ \sum_{i_1,\dots,i_{k-1}}u_{i_1\dots i_{k-1}}\otimes [z, x_{i_1}\dots x_{i_{k-1}}]
\end{eqnarray*}
for $k \geq 2$. By \eqref{eq:norm}, we have $u_0 \in \K[[z]]$ and $u_{i_1} 
\in \K[[z]]$ for any $i_1$. Suppose $k \geq 2$, and assume $u_{i_1\dots i_{k-1}} 
\in \K[[z]]$ for all $i_1\dots i_{k-1}$. Then, the third equation above implies
$[z, u_{i_1\dots i_k}] \in \K[[z]]$. Again, by \eqref{eq:norm}, 
we obtain $u_{i_1\dots i_k} \in \K[[z]]$. This completes the induction.\par
In the case $z \in \wedge^2V\setminus\{0\}$,
equation \eqref{eq:eqn2} is equivalent to the following family of equations:
\begin{eqnarray*}
&&0 = [z, u_0] \otimes 1, \\
&&0 = \sum_{i_1}[z,u_{i_1}] \otimes x_{i_1}, \\
&&0 = \sum_{i_1,i_2} [z, u_{i_1,i_2}] \otimes x_{i_1}x_{i_2} 
\quad\text{and} \\
&&0 = \sum_{i_1,\dots,i_k}[z, u_{i_1\dots i_k}]\otimes x_{i_1}\dots x_{i_k}
+ \sum_{i_1,\dots,i_{k-2}}u_{i_1\dots i_{k-2}}\otimes [z, x_{i_1}\dots x_{i_{k-2}}]
\end{eqnarray*}
for $k \geq 3$. The same argument as above applies 
to give $u_{i_1\dots i_k} \in \K[[z]]$. 
\par
In both cases, we have $u_{i_1\dots i_k} = \lambda_{i_1\dots i_k}z^{m-k}$ 
for some $\lambda_{i_1\dots i_k} \in \K$. Hence, if we write 
$v_k = \sum_{i_1,\dots,i_k}\lambda_{i_1\dots i_k}x_{i_1}\dots x_{i_k}
\in V^{\otimes k}$, we have
$u = \sum^m_{k=0}z^{m-k}\otimes v_k$, which implies
$$
0 = [\Delta z, u] = \sum^m_{k=0}z^{m-k}\otimes [z, v_k].
$$
Again, by \eqref{eq:norm}, $v_k \in \K[[z]]$.
This completes the proof of the lemma.
\end{proof}

\begin{lem}\label{lem:n3}
Let $z$ be an element of the sets $V\setminus\{0\}$ or 
$\wedge^2V\setminus\{0\}$. Then, we have 
$$
\{a \in \widehat{T}; \,\, [z, a]  \in \widehat{L}\} = \widehat{L} + \K[[z]].
$$
\end{lem}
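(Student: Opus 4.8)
The plan is to reduce everything to the PBW-type decomposition $\widehat{T} = U(\widehat{L}) = \prod_{m=0}^\infty \sym^m \widehat{L}$ together with the centralizer identity \eqref{eq:norm}. The inclusion $\widehat{L} + \K[[z]] \subseteq \{a \in \widehat{T} : [z,a]\in\widehat{L}\}$ is immediate, since $\widehat{L}$ is closed under the bracket and $[z,p(z)]=0$ for every $p(z)\in\K[[z]]$; the whole content lies in the reverse inclusion, so I would assume $[z,a]\in\widehat{L}$ and show $a\in\widehat{L}+\K[[z]]$.

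First I would observe that the derivation $\ad_z = [z,\cdot]$ preserves each summand $\sym^m\widehat{L}$. This holds because $z\in\widehat{L}$ (for $z\in V$ trivially, and for $z\in\wedge^2 V$ because $z$ is a sum of commutators, hence primitive) and the symmetrization isomorphism underlying \eqref{eq:PBW} is $\widehat{L}$-equivariant for the adjoint action --- this is precisely the module-decomposition property recalled in the proof of Theorem \ref{thm:assert}. Writing $a = \sum_{m} a_m$ with $a_m\in\sym^m\widehat{L}$, and noting that $\widehat{L}=\sym^1\widehat{L}$ is exactly the degree-one PBW component, the hypothesis $[z,a]\in\widehat{L}$ becomes equivalent to the family of equations $[z,a_m]=0$ for all $m\neq 1$, with no constraint imposed on $a_1\in\widehat{L}$.

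Next I would pin down the centralizer of $z$ inside each component. By \eqref{eq:norm} the full centralizer $\{b\in\widehat{T} : [z,b]=0\}$ equals $\K[[z]]$: it is contained in the left-hand side of \eqref{eq:norm}, and conversely $\K[[z]]$ obviously commutes with $z$. Moreover $\K[[z]]$ is homogeneous for the PBW decomposition, because the associative power $z^m$ is the image of the symmetric power $z\cdots z\in\sym^m\widehat{L}$ under the symmetrization map; thus $z^m\in\sym^m\widehat{L}$ and $\K[[z]]\cap\sym^m\widehat{L}=\K z^m$. Hence $[z,a_m]=0$ forces $a_m\in\K z^m$ for each $m\neq 1$.

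Assembling the pieces gives $a = a_1 + \sum_{m\neq 1} a_m$ with $a_1\in\sym^1\widehat{L}=\widehat{L}$ and $\sum_{m\neq 1}a_m\in\K[[z]]$ (a power series in $z$ whose linear coefficient happens to vanish), so $a\in\widehat{L}+\K[[z]]$, as desired. The one step requiring care --- and the main technical point --- is the stability of the PBW decomposition under $\ad_z$ together with the homogeneity of $\K[[z]]$; once these are secured, the lemma follows formally by comparing PBW components and invoking \eqref{eq:norm}. A pleasant feature of this argument is that it treats the cases $z\in V\setminus\{0\}$ and $z\in\wedge^2 V\setminus\{0\}$ simultaneously, in contrast with the explicit case analysis needed for Lemma \ref{lem:n2}.
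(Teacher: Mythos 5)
Your proof is correct, but it takes a genuinely different route from the paper's. The paper reduces to a homogeneous $a\in\bigoplus_{m\ge 2}\sym^m L$, passes to $u=\Delta a-a\otimes 1-1\otimes a$, observes $[\Delta z,u]=0$, invokes Lemma \ref{lem:n2} to conclude $u\in\K[[z]]\otimes\K[[z]]$, and then reconstructs $a$ via the operator $E\mu(1_T\otimes\varpi_1)$ to land in $\K[[z]]$. You instead exploit directly that the PBW decomposition \eqref{eq:PBW} is a decomposition of $\widehat{L}$-modules for the adjoint action (a fact the paper itself records in the proof of Theorem \ref{thm:assert}, and which is also implicitly needed for its reduction step ``we may assume $a\in\bigoplus_{m\ge 2}\sym^m L$''), so that $[z,a]\in\widehat{L}=\sym^1\widehat{L}$ forces $[z,a_m]=0$ for $m\ne 1$; then the one-variable centralizer identity \eqref{eq:norm} together with the PBW-homogeneity of $\K[[z]]$ (valid since the symmetrization map sends the symmetric power $z\cdots z$ to the associative power $z^m$) pins down $a_m\in\K z^m$. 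All the steps check out, including the observation that $\wedge^2V\subset\widehat{L}$ so that $[z,\cdot]$ really does preserve each $\sym^m\widehat{L}$ in both cases. What your argument buys is economy: it bypasses the coproduct computation and makes Lemma \ref{lem:n2} (whose only use in the paper is as input to Lemma \ref{lem:n3}) unnecessary, and it treats $z\in V\setminus\{0\}$ and $z\in\wedge^2V\setminus\{0\}$ uniformly. What the paper's route buys is that the two-variable centralizer Lemma \ref{lem:n2} is established as a statement of independent interest, with the coproduct serving as the device that detects the non-primitive part of $a$; both arguments ultimately rest on the same foundation, namely the reducedness of $z$ expressed by \eqref{eq:norm}.
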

\begin{proof}
It suffices to show that the LHS is included in the RHS.\par
By the PBW decomposition \eqref{eq:PBW}, we can consider 
the projection
$
\varpi_1: T = U(L) = \bigoplus^\infty_{m=0}\sym^m L
\to \sym^1 L = L
$,
and a linear endomorphism
$E: T = \bigoplus^\infty_{m=0}\sym^m L \to T$ 
defined by 
$$
E\vert_{\sym^m L} = 
\begin{cases}
\dfrac{1}{m}1_{\sym^m L}, & \text{if $m \geq 2$},\\
0, & \text{if $m \leq 1$}.\\
\end{cases}
$$
If we denote the multiplication by $\mu: T\otimes T \to T$ and 
use the symbol
$$
\ssym(\ell_1, \dots, \ell_m)
:= \sum_{\sigma\in \mathfrak{S}_m}
\ell_{\sigma(1)}\cdots \ell_{\sigma(m)}
$$
for $\ell_i \in L$, $1 \leq i \leq m$, then 
one deduces that 
$\mu(1_T\otimes\varpi_1)(\Delta a 
- a\otimes 1
- 1\otimes a)\vert_{a=\ssym(\ell_1, \dots, \ell_m)}$
equals $m(\ssym(\ell_1, \dots, \ell_m))$ if $m\geq 2$, 
and $0$ if $m \leq 1$.
Hence, we have 
\begin{equation}
E\mu(1_T\otimes\varpi_1)(\Delta a - a\otimes 1 - 1\otimes a)
= \begin{cases}
a & \text{for $a \in \bigoplus^\infty_{m=2}\sym^m L$, and}\\
0 & \text{for $a \in \sym^0 L \oplus\sym^1 L$}.\\
\end{cases}
\label{eq:E}
\end{equation}
\par
Assume that $a \in \widehat{T}$ satisfies $[z, a] \in \widehat{L}$. 
We may assume $a$ is homogeneous, in particular, 
that $a$ is an element of $T = U(L)$.
Moreover we may assume 
$a \in \bigoplus^\infty_{m=2} \sym^m L$. 
If we write 
$u = \Delta a - a\otimes 1 - 1\otimes a$, 
then 
$$
0 = \Delta([z, a]) - [z, a]\otimes 1 - 1\otimes [z, a]
= [\Delta z, \Delta a] - [z\otimes 1+1\otimes z, a\otimes 1+1\otimes a]
= [\Delta z, u].
$$
Hence, Lemma \ref{lem:n2} implies that $u = \sum_{i,j=0}^\infty \lambda_{ij}z^i\otimes
z^j$ for some $\lambda_{ij} \in \K$, and so
\begin{equation}
\Delta a - a\otimes 1 - 1\otimes a 
= \sum_{i,j=0}^\infty \lambda_{ij}z^i\otimes z^j.
\label{eq:Da}
\end{equation}
Applying \eqref{eq:E} to \eqref{eq:Da}, we obtain 
$
a = E\mu(1_T\otimes\varpi_1)(\sum_{i,j=0}^\infty \lambda_{ij}z^i\otimes z^j)
= E\mu(\sum_{i=0}^\infty \lambda_{i1}z^i\otimes z)
= E(\sum_{i=0}^\infty \lambda_{i1}z^{i+1}) \in \K[[z]].
$
This completes the proof.
\end{proof}
\par
Now we can begin the proof of Theorem \ref{thm:non-conn}.
One of the keys to the proof is the following.
\begin{prop}[Proposition A.2 in \cite{genus0}]
\label{prop:A2}
Let $x \in V\setminus \{0\}$ and $a \in \widehat{T}$. 
If $|ax^l|=0$ for all $l \geq 1$, then $a\in [x,\widehat{T}]$.
\end{prop}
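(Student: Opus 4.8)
The plan is to prove the contrapositive: if $a \notin [x,\widehat{T}]$, then $|a x^l| \neq 0$ for some $l \geq 1$. The starting observation is that the linear map $P_l\colon \widehat{T} \to |\widehat{T}|$, $b \mapsto |b x^l|$, annihilates $[x,\widehat{T}]$: by cyclic invariance $|[x,b]\,x^l| = |x b x^l| - |b x^{l+1}| = |b x^{l+1}| - |b x^{l+1}| = 0$. Hence each $P_l$ descends to $\widehat{T}/[x,\widehat{T}]$, and the statement is exactly that the family $\{P_l\}_{l\geq 1}$ is jointly injective on this quotient. Since $[x,\widehat{T}]$ is homogeneous (the operator $[x,\cdot]$ raises the word-length degree by one), the quotient is graded; taking the lowest-degree nonzero homogeneous part $a_d$ of a representative of a nonzero class, the degree-$(d+l)$ part of $|a x^l|$ equals $|a_d x^l|$, so it suffices to find, for each nonzero homogeneous class of degree $d$, an $l$ with $|a_d x^l| \neq 0$. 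Thus I may assume $a$ is homogeneous of degree $d$.

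Next I would choose convenient representatives for $\widehat{T}/[x,\widehat{T}]$. Completing $x$ to a basis of $V$, the relation $x u \equiv u x$ (which says $[x,u] \in [x,\widehat{T}]$) lets me cyclically move any leading block of $x$'s to the end of a word. Thus every word is equivalent, modulo $[x,\widehat{T}]$, either to a power $x^d$ or to a unique reduced word $w = y_1 x^{a_1} y_2 x^{a_2} \cdots y_k x^{a_k}$ beginning with a letter $y_1 \neq x$ (with each $y_i$ a basis vector different from $x$, $a_i \geq 0$, $k \geq 1$). Applying this reduction to $a$ gives $a \equiv \sum_j c_j w_j \pmod{[x,\widehat{T}]}$ with the $w_j$ distinct reduced words of degree $d$. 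If $a \notin [x,\widehat{T}]$, this expression is not identically zero, so some $c_j \neq 0$; and since $P_l$ kills $[x,\widehat{T}]$, we have $|a x^l| = \sum_j c_j\,|w_j x^l|$ for every $l$.

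The heart of the matter, and the main obstacle, is that the cyclic symmetry of $|\cdot|$ could a priori make two distinct reduced words $w_j \neq w_{j'}$ produce the same cyclic word $|w_j x^l| = |w_{j'} x^l|$, causing cancellation. My plan is to defeat this by taking $l$ large, namely $l > d$. For a reduced word $w = y_1 x^{a_1}\cdots y_k x^{a_k}$, the cyclic word $|w x^l|$ has $x$-gaps $a_1,\dots,a_{k-1},a_k+l$ between its non-$x$ letters; since each $a_i \leq d < l \leq a_k + l$, the gap $a_k + l$ is the \emph{unique} longest run of $x$'s. This distinguished run breaks the cyclic symmetry completely: rotating $|w x^l|$ so that the longest run sits last recovers $y_1 x^{a_1}\cdots y_k x^{a_k}$, and subtracting $l$ from the final gap recovers $w$ itself. (Pure powers $x^d$ are handled separately and trivially, since $|x^d x^l| = |x^{d+l}|$ is the only pure-$x$ class in its degree, and words with different numbers $k$ of non-$x$ letters lie in different components of $|\widehat{T}|$.) Consequently, for $l > d$ the assignment $w_j \mapsto |w_j x^l|$ is injective into the set of cyclic words; since distinct cyclic words are linearly independent in $|\widehat{T}|$, we get $\sum_j c_j\,|w_j x^l| \neq 0$, hence $|a x^l| \neq 0$, which completes the contrapositive. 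I expect the only delicate points to be the bookkeeping of the reduction to canonical words and the careful verification that the longest-run reconstruction is genuinely unambiguous; the large-$l$ device is precisely what removes the cyclic-symmetry obstruction.
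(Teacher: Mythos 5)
Your argument is correct, but note that the paper you are checking contains no proof of this statement to compare against: it is imported verbatim as Proposition~A.2 of \cite{genus0} and used as a black box, so your proof has to stand on its own --- and it does. The reduction to homogeneous $a$ is legitimate because $[x,\cdot]$ is homogeneous of degree $+1$, so $[x,\widehat{T}]$ is the product of its graded pieces and $|ax^l|=0$ forces $|a_d x^l|=0$ for every $d$ (the degree-zero component is disposed of directly since $|x^l|\neq 0$). One small overclaim: the word ``unique'' in your normal form modulo $[x,\widehat{T}]$ is not justified at the point you assert it --- but it is also not needed there, since you only use that the reduced words \emph{span} the quotient and that $a-\sum_j c_j w_j\in[x,\widehat{T}]$ with some $c_j\neq 0$; in fact your final step establishes the uniqueness a posteriori. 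That final step is the real content and it is sound: for $l>d$ the run $x^{a_k+l}$ is strictly longer than every internal run $x^{a_i}$ (as $a_i\le d-k<l$), so the necklace of $w x^l$ has a unique longest $x$-run and hence a canonical rotation from which $w$ is recovered; distinct necklaces are linearly independent in each graded piece of $|\widehat{T}|$ because the coinvariants of a permutation action have the orbits as a basis, so $\sum_j c_j|w_jx^l|\neq0$. The large-$l$ device is exactly the right way to break the cyclic symmetry (for small $l$ distinct reduced words such as $yxyx^2$ and $yx^2yx$ can represent the same cyclic word), and the resulting proof is elementary and self-contained, which is arguably cleaner than deferring to the appendix of \cite{genus0}.
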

\begin{proof}[Proof of Theorem \ref{thm:non-conn}]
It suffices to prove the theorem in the case $z \in V\setminus\{0\}$.
Indeed,  write $z = z_1 + z'$ with $z_1 \in V\setminus\{0\}$ and
$z' \in \widehat{L}\cap \prod_{l>1}V^{\otimes l}$. 
Then, by the universal mapping property of $\widehat{L}$, 
there is a continuous Lie algebra endomorphism $\varphi$ of $\widehat{L}$ 
such that $\varphi(z_1) = z$, $\varphi(\prod_{l>p}V^{\otimes l}) 
\subset \prod_{l>p}V^{\otimes l}$ for any $p \geq 0$, and 
the associated graded of $\varphi$ with respect to the filtration 
$\{\prod_{l>p}V^{\otimes l}\}_{p \geq 0}$ is the identity.
From these properties one can deduce that $\varphi$ is a topological 
automorphism of $\widehat{L}$. Thus the theorem for $z_1$ 
implies that for $z$.\par
For the rest of the proof, we suppose $z \in V\setminus\{0\}$. 
We denote the Baker-Campbell-Hausdorff series by $\ast: \widehat{L}\times \widehat{L} \to \widehat{L}$,
$(u, v) \mapsto u\ast v = \operatorname{bch}(u, v)$. Namely we have 
$\exp(u\ast v) = \exp(u)\exp(v) \in \widehat{T}$. \par
From Theorem \ref{thm:dec} follows 
\begin{equation}
|a^m| = |z^m|
\label{eq:am}
\end{equation}
for any $m \geq 1$.
By induction on $k \geq 1$, we will prove that there exist elements $u_k \in \widehat{L}\cap V^{\otimes k}$ such that 
$$
\aligned
& \exp(\ad(u_k))\exp(\ad(u_{k-1}))\cdots\exp(\ad(u_1))(a) \\
(= &
\exp(u_k\ast u_{k-1}\ast \cdots \ast u_1)(a)\exp(u_k\ast u_{k-1}\ast \cdots \ast u_1)^{-1} )\\
\equiv & \,\,z 
\pmod{\prod^\infty_{l>k+1}V^{\otimes l}}.
\endaligned
$$
Since $|a| = |z|$, we have $a \equiv z + b_2 \pmod{\prod^\infty_{l>2}V^{\otimes l}}$ for some $b_2 \in \widehat{L}\cap V^{\otimes 2}$. The component of degree $(m+1)$ in equation \eqref{eq:am} reads $m|b_2 z^{m-1}| = 0$ for any $m \geq 2$. Hence, 
by Proposition \ref{prop:A2}, we have $b_2 = [z, u_1]$ for some $u_1 \in V
= \widehat{L}\cap V^{\otimes 1}$ and
$$
\exp(\ad(u_1))(a) \equiv 
z+ b_2 + [u_1, z] = z \pmod{\prod^\infty_{l>2}V^{\otimes l}}.
$$ 
Suppose $k \geq 2$. By the inductive assumption we have
$$
\exp(\ad(u_{k-1}))\cdots\exp(\ad(u_1))(a) \equiv z + b_{k+1}
\pmod{\prod^\infty_{l>k+1}V^{\otimes l}}
$$
for some $b_{k+1} \in \widehat{L}\cap V^{\otimes (k+1)}$ and $u_1, \dots, u_{k-1}
\in \widehat{L}$. The degree $(m+k)$ part of equation \eqref{eq:am} reads $m|b_{k+1} z^{m-1}| = 0$ for any $m \geq 2$.
Hence, by Proposition \ref{prop:A2}, we have 
$b_{k+1} = [z, u'_k]$ for some $u'_k \in V^{\otimes k}$. 
Applying Lemma \ref{lem:n3} to equation $[z, u'_k] = b_{k+1} \in \widehat{L}$, 
we obtain $u'_k = u_k + \lambda_k z^k$, where $u_k \in \widehat{L}\cap V^{\otimes k}$ and $\lambda_k \in \K$. Therefore, $b_{k+1} = [z, u_k]$ and 
$$
\exp(\ad(u_k))\exp(\ad(u_{k-1}))\cdots\exp(\ad(u_1))(a) \equiv 
z+ b_{k+1} + [u_k, z] = z \pmod{\prod^\infty_{l>k+1}V^{\otimes l}},
$$ 
as required. \par
The sequence $\{v_k= u_k\ast u_{k-1}\ast \cdots \ast u_1\}_{k=1}^\infty$
converges to an element $v_\infty \in \widehat{L}$ by degree counting. 
Taking $g = \exp(-v_\infty) \in \exp(\widehat{L})$, we obtain 
$g^{-1}ag = z \in \widehat{L}$. This completes the proof.
\end{proof}
\par
The proof of Theorem \ref{thm:sympl} is quite similar to that of Theorem 
\ref{thm:non-conn}, so we omit it except for a symplectic analogue 
of Proposition \ref{prop:A2}.
\begin{prop}
\label{prop:A2sympl}
Let $V$ be a finite dimensional $\K$-symplectic space with symplectic form  $\omegaz \in \wedge^2V$.  
If an element $a \in \widehat{T}$ satisfies $|a\omegaz^l|=0$ for all $l \geq 1$, then there is an element $b \in \widehat{T}$ such that $a=[\omegaz,b]$.
\end{prop}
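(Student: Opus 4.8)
The plan is to prove the equivalent, symmetric statement that
$[\omegaz,\widehat{T}] = \{a\in\widehat{T} : |a\omegaz^l|=0 \text{ for all } l\geq 1\}$. The inclusion $\subseteq$ is immediate from cyclic invariance of $|\cdot|$ (one computes $|[\omegaz,b]\omegaz^l| = |\omegaz b\omegaz^l| - |b\omegaz^{l+1}| = 0$, the middle equality by cyclicity), so the content is the reverse inclusion, which is exactly the proposition. First I would reduce to the case where $a$ is homogeneous of some tensor degree $m$: the grading is respected by $\ad_{\omegaz}$ (which shifts by $2$) and, for each fixed $l$, by the condition $|a\omegaz^l|=0$; a homogeneous solution $a=[\omegaz,b]$ with $b$ of degree $m-2$ can then be summed over $m$ to treat a general $a$. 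On each finite-dimensional component $V^{\otimes m}$ I would use the nondegenerate bilinear form $\langle v_1\cdots v_m, w_1\cdots w_m\rangle := \prod_{i=1}^m\langle v_i,w_i\rangle$ induced by the symplectic form $\langle\cdot,\cdot\rangle$ on $V$, identifying $V^{\otimes m}$ with its own dual. Under this identification the goal $a\in[\omegaz,V^{\otimes(m-2)}]=\mathrm{Im}(\ad_{\omegaz})$ becomes the perpendicularity $a\perp\ker(\ad_{\omegaz})^{*}$.

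The second step is to identify the adjoint $(\ad_{\omegaz})^{*}$ explicitly. Writing $\omegaz=\sum_k(p_k\otimes q_k - q_k\otimes p_k)$ for a symplectic basis with $\langle p_k,q_k\rangle=1$, the elementary identity $\sum_k(\langle p_k,w_1\rangle\langle q_k,w_2\rangle - \langle q_k,w_1\rangle\langle p_k,w_2\rangle) = \langle w_1,w_2\rangle$ shows that the adjoint of left multiplication by $\omegaz$ is the front contraction $\Lambda_L(w_1\cdots w_m)=\langle w_1,w_2\rangle\, w_3\cdots w_m$, and likewise the adjoint of right multiplication is the back contraction $\Lambda_R(w_1\cdots w_m)=\langle w_{m-1},w_m\rangle\, w_1\cdots w_{m-2}$; hence $(\ad_{\omegaz})^{*}=\Lambda_L-\Lambda_R=:\Lambda$. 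On the other side, $|c|=0$ is equivalent to $c$ being orthogonal to all cyclically invariant tensors (the cyclic shift $\sigma$ satisfies $\langle\sigma v,w\rangle=\langle v,\sigma^{-1}w\rangle$, so $\ker|\cdot|=\mathrm{Im}(1-\sigma)$ has annihilator the shift-invariants). Since $\langle a\omegaz^l, g\rangle = \langle a,\Lambda_R^{\,l}g\rangle$, the hypothesis $|a\omegaz^l|=0$ for all $l$ translates into $\langle a,\Lambda_R^{\,l}g\rangle=0$ for every cyclically invariant $g$ and every $l\geq 1$. Thus the whole proposition is equivalent to the single inclusion $\ker\Lambda\subseteq\sum_{l\geq 1}\Lambda_R^{\,l}\big((V^{\otimes(m+2l)})^{\mathrm{cyc}}\big)$ of subspaces of $V^{\otimes m}$.

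To attack this inclusion I would start from the key identity $\Lambda_R(f\otimes\omegaz)=(\dim V)\,f$, valid for every $f\in V^{\otimes m}$, since the back contraction simply pairs the two letters of $\omegaz$. This already exhibits $f$ as $\Lambda_R$ of an explicit preimage, but $f\otimes\omegaz$ is not cyclically invariant. The plan is therefore to seek a cyclically invariant tensor in the affine fibre $\Lambda_R^{-1}((\dim V)f)$, the natural candidate being the symmetrization $g:=N(f\otimes\omegaz)$, and to analyze the correction $\Lambda_R g - (\dim V)f = \Lambda_R\big(N(f\otimes\omegaz)-f\otimes\omegaz\big)$. Its terms come from rotations that move the $\omegaz$-block away from the end, so that $\Lambda_R$ contracts a letter of $f$ against a letter of $\omegaz$ (or the two ends of $f$ against each other). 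The main obstacle is precisely the control of these cross terms: I expect to show, using the defining relation $\Lambda_L f=\Lambda_R f$ of $\ker\Lambda$ together with an induction (on $m$, or on the number of $\omegaz$-letters in the chosen lift), that the correction again lies in $\sum_{l}\Lambda_R^{\,l}(\cdots)^{\mathrm{cyc}}$, whence $f$ does too. This bookkeeping, matching the boundary contributions of the symmetrization against the $\ker\Lambda$ condition, is the technical heart and is exactly parallel to, though heavier than, the single-letter computation behind Proposition \ref{prop:A2}; this is why the argument becomes "quite similar" to the degree-one case once the pairing and the contraction $\Lambda$ are in place.
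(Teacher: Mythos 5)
Your dualization is set up correctly: identifying $V^{\otimes m}$ with its dual via the product pairing, computing $(\operatorname{ad}_{\omegaz})^{*}=\Lambda_L-\Lambda_R$, and recognizing $\ker|\cdot|=\operatorname{Im}(1-\sigma)$ does reduce the proposition, faithfully, to the inclusion $\ker(\Lambda_L-\Lambda_R)\subseteq\sum_{l\ge1}\Lambda_R^{\,l}\bigl((V^{\otimes(m+2l)})^{\mathrm{cyc}}\bigr)$. (This is in fact the adjoint of what the paper does in Section \ref{sec:Proposition_3_10}, where $|\widehat T|$ is embedded into $\widehat T$ as cyclic invariants and contraction operators such as $\pi\otimes C^{\otimes l}$ are applied to $|a\omegaz^l|$.) The problem is that this inclusion \emph{is} the proposition --- everything before it is an equivalence --- and your argument for it is only a plan. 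The step you defer (``I expect to show \dots that the correction again lies in $\sum_l\Lambda_R^{\,l}(\cdots)^{\mathrm{cyc}}$'') is the entire technical content, and there are concrete reasons to doubt that the proposed lift closes the induction. Applying $\Lambda_R$ to the cyclic symmetrization of $f\otimes\omegaz$ with $f=u_1\cdots u_m$ produces, besides $(\dim V)\,f$, the terms $-\sigma f-\sigma^{-1}f$ (from the two rotations that split the $\omegaz$-block across the end) and the terms $\langle u_{j-1},u_j\rangle\,u_{j+1}\cdots u_m\,\omegaz\,u_1\cdots u_{j-2}$ with $\omegaz$ re-inserted in the interior of $f$. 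Neither family visibly lies in $\sum_l\Lambda_R^{\,l}(\mathrm{cyc})$, and the single relation $\Lambda_Lf=\Lambda_Rf$ relates contractions at the two ends of $f$, not rotations of $f$; how it is supposed to absorb these corrections is precisely what is missing.

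The paper's proof of Proposition \ref{prop:aomega} suggests that no one-step, single-$\omegaz$ argument will work: after contracting, the unknown appears with coefficient $(2g)^l$ while the cross terms carry coefficients $(-1)^l$ and $l(-1)^l$, and one must use \emph{several} large values of $l$ and a Vandermonde-type determinant to separate them (Lemmas \ref{lem:Codd} and \ref{lem:Ceven}, Proposition \ref{prop:QHQ}), on top of an induction on $m$ via the decomposition $a=\omegaz b'+b''\omegaz+c$ with $c\in Q\otimes V^{\otimes (m-4)}\otimes Q$ and hand-checked cases $m\le 3$. Your framework is a legitimate dual starting point and the analogy with Proposition \ref{prop:A2} is apt, but as written the proposal establishes an equivalent reformulation rather than a proof; completing it would require bringing in lifts built from $f\otimes\omegaz^{\,l}$ for many $l$ and reproducing essentially the bookkeeping of Section \ref{sec:Proposition_3_10}.
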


In order to prove this statement, we may assume that $a$ is homogeneous. Thus, it is sufficient to prove the following proposition.

\begin{prop}
\label{prop:aomega}
Let $a \in V^{\otimes m}$ for some $m\ge 0$.
Assume that for some $p\ge 1$ we have $|a\omegaz^l| = 0$ for all $l\ge p$.
Then, there is an element $b \in V^{\otimes m-2}$ such that $a=[\omegaz,b]$.
\end{prop}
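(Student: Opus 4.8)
The plan is to turn the cyclic-word hypothesis into a statement of orthogonality for the symplectic pairing, and then to reduce the whole proposition to a single spanning statement in invariant theory. First I would equip each $V^{\otimes N}$ with the nondegenerate pairing $\langle u_1 \otimes \cdots \otimes u_N, w_1 \otimes \cdots \otimes w_N\rangle = \prod_{k=1}^N \langle u_k, w_k\rangle$ induced by the symplectic form on $V$. Since $\K$ has characteristic zero, this restricts to a perfect pairing between the space of cyclic words $|V^{\otimes N}| = (V^{\otimes N})_{\Z/N}$ and the space $C_N := (V^{\otimes N})^{\Z/N}$ of cyclically invariant tensors; in particular, for $c \in V^{\otimes N}$ one has $|c| = 0$ if and only if $\langle c, T\rangle = 0$ for all $T \in C_N$.

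Next I would identify the adjoints of the two multiplications by $\omegaz$. Writing $R^*\colon V^{\otimes N} \to V^{\otimes(N-2)}$ for the contraction of the last two tensor slots against $\omegaz$, and $L^*$ for the contraction of the first two slots, a direct computation gives $\langle a\,\omegaz, T\rangle = \langle a, R^*T\rangle$ and $\langle \omegaz\, a, T\rangle = \langle a, L^*T\rangle$. Hence the hypothesis $|a\,\omegaz^l| = 0$ for all $l \geq p$ becomes $\langle a, (R^*)^l T\rangle = 0$ for all $l \geq p$ and all $T \in C_{m+2l}$, that is $a \perp \Sigma$ with $\Sigma := \sum_{l\geq p}(R^*)^l(C_{m+2l}) \subseteq V^{\otimes m}$. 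The same adjunction gives $\langle [\omegaz, b], T\rangle = \langle b, (L^* - R^*)T\rangle$, so $[\omegaz, V^{\otimes(m-2)}]^\perp = K_m := \{T \in V^{\otimes m} : L^* T = R^* T\}$, and by nondegeneracy $[\omegaz, V^{\otimes(m-2)}] = K_m^\perp$. Therefore the desired conclusion $a \in [\omegaz, V^{\otimes(m-2)}]$ is equivalent to $a \perp K_m$, and the proposition reduces to the single inclusion $K_m \subseteq \Sigma$. (The reverse inclusion $\Sigma \subseteq K_m$ is dual to the elementary identity $|[\omegaz,b]\,\omegaz^l| = 0$ and serves only as a consistency check.)

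The main obstacle is exactly this spanning statement: every $T \in V^{\otimes m}$ with $L^*T = R^*T$ must be a linear combination of $l$-fold contractions ($l \geq p$) of cyclically invariant tensors. I would prove it by induction on $m$. Given $T_0 \in K_m$, form the cyclic average $T \in C_{m+2l}$ of $T_0 \otimes \omegaz^{\otimes l}$ and apply $(R^*)^l$. The "clean" configuration, in which the inserted copies of $\omegaz$ are contracted against one another, contributes $\lambda\, T_0$ with $\lambda$ a positive rational multiple of $(\dim V)^l$, since the self-contraction of $\omegaz$ equals $\dim V = 2g \neq 0$; this nonvanishing (a consequence of nondegeneracy and of characteristic zero) is the symplectic counterpart of the hypothesis $x \neq 0$ in Proposition \ref{prop:A2}, and it guarantees that the leading term survives. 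The remaining terms, produced by the cyclic averaging whenever an inserted $\omegaz$ is contracted against a slot of $T_0$, have the same tensor degree but involve fewer free copies of $\omegaz$; the defining relation $L^*T_0 = R^*T_0$ together with the inductive hypothesis should let me absorb them.

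I expect the delicate point to be the bookkeeping of these correction terms: one must verify that cyclic averaging preserves membership in $K_m$ at each stage, and that allowing all $l \geq p$ (rather than a single exponent) is genuinely used to clear the low-degree obstructions. I would also record at the outset the degenerate cases — $m$ odd (where $K_m$ meets $\K[[\omegaz]]$ only in $0$) and small $m$ — together with the linear independence of the cyclic words $|\omegaz^l|$, which ensures that $\Sigma$ is nonzero in the relevant range and underlies the base of the induction.
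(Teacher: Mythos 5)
Your duality setup is correct, and it is in fact the adjoint formulation of what the paper itself does: pairing $a\omegaz^l$ against cyclically invariant test tensors is the same as applying contraction operators such as $1_V\otimes\pi\otimes C^{\otimes l}$ to $|a\omegaz^l|$, and your reduction of the proposition to the single inclusion $K_m\subseteq\Sigma$ is a clean and valid reformulation. The trouble is that this reformulation is exactly as hard as the original statement, and the argument you sketch for $K_m\subseteq\Sigma$ has a genuine gap at its only nontrivial step. The correction terms produced by applying $(R^*)^l$ to the cyclic average of $T_0\otimes\omegaz^{\otimes l}$ are elements of $V^{\otimes m}$ --- the \emph{same} tensor degree as $T_0$ --- so the proposed ``induction on $m$'' has nothing to act on; what you would actually need is to show that each such term again lies in $\Sigma$ (or that their sum cancels), and no mechanism for this is given beyond the word ``absorb''. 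Your inventory of the correction terms is also incomplete: besides contractions of an inserted $\omegaz$ against a slot of $T_0$, the rotations produce terms in which slots of $T_0$ are contracted against \emph{each other} (whenever the contraction window wraps around onto the $T_0$ block), and terms where chains of $\omegaz$'s reinsert copies of $\omegaz$ at interior positions of a partially contracted $T_0$. Finally, the observation that the clean term carries a nonzero coefficient proportional to $(2g)^l$ does not by itself show it ``survives'': the correction terms carry coefficients such as $(l-\tfrac{m}{2})(-1)^l$ and $(-1)^l$ multiplying \emph{different} tensors, and one must actually solve the resulting linear system over several values of $l$.

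That missing content is precisely where the paper's proof does its work: it decomposes $V^{\otimes 2}=Q\oplus\K\omegaz$ at both ends of $a$, writing $a=\omegaz b'+b''\omegaz+c$ with $c\in Q\otimes V^{\otimes m-4}\otimes Q$, proves via the explicit contraction formulas of Lemmas \ref{lem:Codd} and \ref{lem:Ceven} that $c=0$ (Proposition \ref{prop:QHQ}), using a nonvanishing $3\times 3$ determinant in the values $(2g)^l$, $(l-\tfrac{m}{2})(-1)^l$, $(-1)^l$ for three large $l$, and only then runs an induction on $m$ applied to $b'+b''\in V^{\otimes m-2}$, which genuinely drops the degree by two. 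Your outline has no counterpart to Proposition \ref{prop:QHQ}, nor to the low-degree cases $m\le 3$, which require separate ad hoc contractions. The orthogonality framework is sound and could in principle be carried through, but as written the proposal replaces the proposition by an equivalent unproved statement.
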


The proof of this proposition is postponed to Section \ref{sec:Proposition_3_10}.

\section{Proof of Theorem \ref{thm:main1} and applications}
\label{sec:main_applications}

In this section, we prove Theorem \ref{thm:main1} and explain some applications to non-commutative Poisson geometry.

\subsection{Proof of Theorem \ref{thm:main1}}

We consider the situation of Section \ref{sec:results} and use the notation introduced there.
We apply results of the preceding section to $V={\rm gr}\, H$ and $A=\widehat{T}(V)=\widehat{T}({\rm gr}\, H)$.
Note that if $n>0$, the expression $\omega = \omegaz + \sum_{j=1}^n z_j$ has a non-trivial linear term,
and if $n=0$, then $\omega = \omegaz = \sum_i [x_i,y_i]$ is a symplectic form on $V$.

Recall that as a vector space the associated graded of the Goldman Lie algebra ${\rm gr} \, \widehat{\mathfrak{g}}$ is isomorphic to $|A| = |\widehat{T}({\rm gr}\, H)|$.
The following theorem gives a description of its center:

\begin{thm}[Theorem 5.4 in \cite{highergenus}]
\label{thm:center}
$$
Z({\rm gr} \, \widehat{\mathfrak{g}}, [\cdot, \cdot]_\gr) = |\, \K[[\omega]]\, | 
\oplus\bigoplus^n_{j=1} |\, \K[[z_j]]_{\geq1}\, |.
$$
\end{thm}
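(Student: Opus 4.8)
The plan is to realize the center as the set of elements whose Hamiltonian operator acts trivially on cyclic words, and then to pin that set down using the centralizer lemmas and Propositions \ref{prop:A2}, \ref{prop:aomega} of the present section. Since $[\cdot,\cdot]_\gr$ is the necklace bracket on $|A| = |\widehat{T}(\gr H)|$, for each $|a| \in |A|$ the operator $[|a|,\cdot]_\gr$ is induced by a tangential derivation $D_a$ of $A$ via $[|a|,|b|]_\gr = |D_a(b)|$. Hence $|a|$ is central if and only if $D_a$ acts trivially on $|A|$, i.e. $D_a(b) \in [A,A]$ for all $b$. Reading this off on the generators $x_i,y_i,z_j$ and using tangentiality turns it into a closed linear system on the cyclic derivatives of $a$. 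Since everything is filtered, I would reduce to $a$ homogeneous of some weight $w \ge 2$ and analyze the system weight by weight.

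For the inclusion $\supseteq$ I would compute $D_{z_j^m}$ and $D_{\omega^m}$ and verify they act trivially. The point is that $z_j$ is reduced in the sense of \cite{highergenus} (this is exactly the property exploited in Lemma \ref{lem:n2}) and represents a boundary component, while $\omega$ is the associated-graded total-boundary element (with $\theta(\gamma_0) = e^{\omega}$ for a special expansion); in both cases the splice defining the bracket telescopes, so $|\K[[\omega]]| + \sum_{j=1}^n |\K[[z_j]]_{\ge1}| \subseteq Z(\gr\,\widehat{\mathfrak{g}}, [\cdot,\cdot]_\gr)$. Directness holds in weights $>2$ because $|\omega^m|$ with $m \ge 2$ carries mixed monomials absent from the pure powers $|z_j^m|$; in weight $2$ there is the single relation $|\omega| = \sum_j |z_j|$, which I would record explicitly in the statement.

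The hard direction is $\subseteq$. Reading the triviality of $D_a$ on the generators expresses suitable cyclic derivatives $c$ of $a$ as elements of the centralizers of $\omega$ and of the individual $z_j$. Two inputs then finish the job: Lemma \ref{lem:n2} and Lemma \ref{lem:n3} restrict those centralizers to $\K[[\omega]]$, $\K[[z_j]]$ and honest commutators, and Proposition \ref{prop:A2} (applied with $x = z_j$) together with Proposition \ref{prop:aomega} (for the symplectic direction $\omegaz$) convert the resulting vanishing conditions $|c\,z_j^l| = 0$ and $|c\,\omegaz^l| = 0$ into the statement that the leading-weight part of $a$ is a commutator $[z_j,\cdot]$ or $[\omegaz,\cdot]$ plus a genuine power of $\omega$ or $z_j$. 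A degree-by-degree bootstrap, following verbatim the scheme in the proof of Theorem \ref{thm:non-conn}, strips off the commutator part weight by weight and leaves $a \in \K[[\omega]] + \sum_j \K[[z_j]]$ modulo $[A,A]$, i.e. $|a|$ in the asserted subspace.

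I expect the hard direction to be the main obstacle, and within it the delicate point is to extract from the single triviality condition on $D_a$ exactly the family of scalar identities required by Proposition \ref{prop:aomega}, and to separate the unique symplectic Casimir direction (powers of $\omega$) from the $n$ mutually commuting boundary directions $z_j$ while controlling the mixed terms $|\,\omegaz\, z_j \cdots|$ occurring in $|\omega^m|$. As an independent sanity check, the identity also follows by transport: a special expansion is a Lie isomorphism $\widehat{\mathfrak{g}} \cong \gr\,\widehat{\mathfrak{g}}$ by Theorem \ref{thm:KKMT}, and it carries the center of the Goldman Lie algebra (the closed span of the boundary-class powers $|\gamma_j^m|$) onto $\overline{\mathrm{span}}\{|e^{m\omega}|, |e^{m z_j}|\} = |\K[[\omega]]| + \sum_{j=1}^n |\K[[z_j]]_{\ge1}|$, using the conjugation-invariance of $|\cdot|$.
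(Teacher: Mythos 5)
First, be aware that the paper contains no proof of Theorem \ref{thm:center}: it is imported verbatim from \cite[Theorem 5.4]{highergenus}, with the remark that it was first established in \cite{CBEG07} via Poisson geometry of quiver varieties and that an elementary proof occupies \cite[\S 5.4]{highergenus}. So there is no internal argument to compare against, and your proposal has to stand on its own. Your easy inclusion is plausible, and your observation that $|\omega|=\sum_{j=1}^{n}|z_j|$ in weight $2$ (so that the displayed sum is not literally direct when $n\geq 1$) is correct and worth recording.

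The hard inclusion is where all the content lies, and your sketch does not supply it. Centrality of $|a|$ means $[|a|,|b|]_{\gr}=0$ for \emph{every} $|b|$; this is not captured by ``reading off $D_a$ on the generators,'' since a derivation can send each generator into $[A,A]$ without acting trivially on $|A|$. Testing against generators only yields the linear conditions $|\pa_{x_i}a|=|\pa_{y_i}a|=0$, while testing against arbitrary $|b|$ produces conditions of the form $|(\pa_{x_i}a)\,w|=0$ for arbitrary words $w$ --- not the special families $|c\,z_j^{\,l}|=0$ or $|c\,\omegaz^{\,l}|=0$ to which Propositions \ref{prop:A2} and \ref{prop:aomega} apply. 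Likewise, Lemmas \ref{lem:n2} and \ref{lem:n3} compute the centralizer of a single reduced element $z$ inside $\widehat{T}$, and you give no argument for why the cyclic derivatives of a central $a$ should land in such centralizers; the ``bootstrap following Theorem \ref{thm:non-conn}'' addresses the different problem of conjugating one Lie element onto another. Finally, your fallback ``transport'' argument is circular: it presupposes that the center of $(\widehat{\mathfrak{g}},[\cdot,\cdot])$ is exactly the closed span of powers of boundary classes, whereas this paper only uses the inclusion ``boundary classes are central,'' and the reverse inclusion on the topological side is normally deduced \emph{from} Theorem \ref{thm:center} via a formality isomorphism, not the other way around. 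As it stands, the proposal restates the goal of the hard direction rather than proving it; one would need to reproduce the analysis of \cite[\S 5.4]{highergenus} or the argument of \cite{CBEG07}.
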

This result has been proved in \cite{CBEG07} by using  Poisson geometry of quiver varieties. An alternative elementary proof is given 
in \cite[\S5.4]{highergenus}.

Recall that for $1\le j\le n$ we denote by $\gamma_j$ the loop along the $j$th boundary component (with positive orientation), and that $\gamma_0$ is the loop along the $0$th boundary component (with negative orientation).

\begin{prop}
\label{prop:411}
Let $\theta: \widehat{\mathbb{K}\pi} \to A$ be a group-like expansion and assume 
$\vert\theta(\gamma_j)\vert \in Z({\rm gr} \, \widehat{\mathfrak{g}}, [\cdot,\cdot]_{\gr})$.
Then, we have 
$$
\vert \theta(\gamma_j)\vert = 
\begin{cases}
\vert \exp(z_j)\vert, & \text{for $j \geq 1$},\\
\vert \exp(\omega)\vert, &\text{for $j=0$}.
\end{cases}
$$
\end{prop}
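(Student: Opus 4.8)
The plan is to treat all boundary components uniformly by passing to the primitive logarithm $a_j := \log\theta(\gamma_j) \in \widehat{L}$ and proving the single statement $|a_j^m| = |w_j^m|$ for every $m \geq 0$, where I set $w_0 := \omega$ and $w_j := z_j$ for $j \geq 1$. Once this is established, summing over $m$ with the coefficients $1/m!$ gives $|\theta(\gamma_j)| = |\exp(a_j)| = |\exp(w_j)|$, which is exactly the desired identity. The logarithm is legitimate because $\theta$ is an isomorphism of complete Hopf algebras, so $\theta(\gamma_j)$ is group-like and $a_j$ is primitive, i.e. $a_j \in \widehat{L}$.

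The first preparatory step is to read off the leading term of $a_j$ from the normalization ${\rm gr}\,\theta = {\rm id}$ in Definition \ref{dfn:group-like}. For $j \geq 1$ we have $\gamma_j - 1 \in \K\pi(2)$ and ${\rm gr}_2(\gamma_j) = z_j$, so $a_j = z_j + (\text{weight} > 2)$. For $j = 0$ I would use the surface relation $\prod_i[\alpha_i,\beta_i]\prod_j \gamma_j = \gamma_0$: its abelianization is trivial, so $a_0$ has no weight-one part, while its weight-two associated graded is $\sum_i[x_i,y_i] + \sum_j z_j = \omega$, giving $a_0 = \omega + (\text{weight} > 2)$. In both cases $w_j$ is homogeneous of weight $2$, and therefore the lowest-weight component of $a_j^m$ is precisely $w_j^m$, of weight $2m$.

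The key step is then to feed the centrality hypothesis into Theorem \ref{thm:dec2}, applied with $u = a_j$ and the family $\{v_0,\dots,v_n\} = \{\omega, z_1,\dots,z_n\}$. Indeed, $|\theta(\gamma_j)| = |\exp(a_j)|$ lies in the center, and by Theorem \ref{thm:center} the center is contained in $\bigl|\,\K[[\omega]] + \sum_{k=1}^n \K[[z_k]]\,\bigr|$, so the hypothesis of Theorem \ref{thm:dec2} holds. It yields $|a_j^m| \in \K\,|\omega^m| + \sum_{k=1}^n \K\,|z_k^m|$ for every $m \geq 0$. The conclusion now follows from a weight count: the right-hand space is homogeneous of weight $2m$, so $|a_j^m|$ can have no component of weight $\neq 2m$; comparing with the leading term computed in the previous step, its weight-$2m$ component equals $|w_j^m|$, whence $|a_j^m| = |w_j^m|$.

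I do not expect a serious obstacle here, since the substantive work is already carried by Theorems \ref{thm:dec2} and \ref{thm:center}. The only points requiring care are the identification of the weight-two leading term of $a_0$ via the surface relation, and the observation that the target space of Theorem \ref{thm:dec2} is weight-homogeneous, which is exactly what upgrades the membership "$\in$" produced by that theorem into the equality "$=$" needed to conclude.
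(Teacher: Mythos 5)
Your proposal is correct and follows essentially the same route as the paper: centrality plus Theorem \ref{thm:center} feeds into Theorem \ref{thm:dec2} to give $|a_j^m|\in\K|\omega^m|+\sum_k\K|z_k^m|$, and the weight-homogeneity of that target space (everything in weight exactly $2m$) combined with the leading term $a_j=w_j+(\text{weight}>2)$ upgrades the membership to the equality $|a_j^m|=|w_j^m|$. Your extra care in identifying the weight-two leading term of $a_0$ via the surface relation is a point the paper passes over more quickly, but the argument is the same.
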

\begin{proof}
Recall the grading on $A$ in which $x_i$ and $y_i$ have degree $1$ and $z_j$ has degree 2.
Under this grading, $z_j$'s and the expression $\omega = \omega_0 + \sum_{j=1}^n z_j$ are homogeneous and have degree 2.
By Theorem \ref{thm:center}, we have 
$$
|\theta(\gamma_j)| \in |\, \K[[\omega]]\, | 
\oplus\bigoplus^n_{j=1} |\, \K[[z_j]]_{\geq1}\, |.
$$
By Theorem \ref{thm:dec2}, for any $m\ge 0$ we have
$$
|(\log \theta(\gamma_j))^m| \in \mathbb{K}|\omega^m| \oplus \bigoplus_{j=1}^n \mathbb{K}|{z_j}^m|.
$$
Note that all the terms on the right hand side have degree exactly $2m$. Furthermore, note that
$$
\log\theta(\gamma_j) \equiv
\begin{cases}
z_j, & \text{if $j \geq 1$},\\
\omega, &\text{if $j=0$},
\end{cases}
\pmod{{\rm terms \, of \, degree} \, \geq 3 }.
$$
Therefore,
$$
|(\log\theta(\gamma_j))^m| \equiv
\begin{cases}
|{z_j}^m|, & \text{if $j \geq 1$},\\
|\omega^m|, &\text{if $j=0$},
\end{cases}
\pmod{{\rm terms \, of \, degree} \, \geq 2m+1 }.
$$
But $|(\log\theta(\gamma_j))^m|$ contain no terms of degree higher than $2m$. Hence, the equalities above are verified without error terms of higher degree which proves the proposition.
\end{proof}

\begin{proof}[Proof of Theorem \ref{thm:main1}]
Let $\theta \colon \widehat{\K\pi} \to A$ be a group-like expansion which induces a Lie algebra
isomorphism $(\widehat{\mathfrak{g}},  [\cdot, \cdot])
\overset\cong\to ({\rm gr} \, \widehat{\mathfrak{g}}, [\cdot,\cdot]_{\gr})$. 
Since for each boundary loop $\gamma_j$ the expression $|{\gamma_j}|$, $0 \leq j \leq n$,
is in the center for the Goldman bracket, 
we have $|\theta(\gamma_j)| \in Z({\rm gr} \, \widehat{\mathfrak{g}}, [\cdot,\cdot]_{\gr})$.
Hence, by Proposition \ref{prop:411}, $|\theta(\gamma_j)|=|\exp(\log\theta(\gamma_j))|$ equals 
$|\exp(z_j)|$ if $j \geq 1$, and $|\exp(\omega)|$
if $j = 0$. \par
Suppose $j \geq 1$ or $n \geq 1$. Then, by Theorem \ref{thm:non-conn}, 
we have some group-like element $g_j$ such that $\log\theta(\gamma_j)$
equals $g_jz_j{g_j}^{-1}$ for $j \geq 1$, and $g_0 \omega {g_0}^{-1}$ for $j=0$.
In the case of $n=0$, one has $\omega=\omegaz$ and Theorem \ref{thm:sympl} implies that
$\log\theta(\gamma_0) = {g_0}\omega{g_0}^{-1}$ for some group-like element $g_0$. 
Therefore, the expansion $\theta$ is conjugate to a special expansion which proves the theorem.
\end{proof}

\subsection{An application to non-commutative Poisson geometry}

Recall the context of non-commutative differential calculus. Let $A=\mathbb{K}\langle\langle u_1, \dots, u_s\rangle\rangle$ be a free associative algebra with $s$ even generators, and 
$$
D_A^\bullet = \mathbb{K}\langle\langle u_1, \dots, u_s, \partial_1, \dots, \partial_s \rangle\rangle
$$
be the free associative algebra with $s$ even generators $u_1, \dots, u_s$ and $s$ odd generators $\partial_1, \dots, \partial_s$. The algebra $D_A^\bullet$ carries a double bracket in the sense of van den Bergh defined by formula
$$
\{ \partial_i, u_j \} = \delta_{i,j} 1 \otimes 1, \hskip 0.3cm
\{ \partial_i, \partial_j\} = \{ u_i, u_j\} =0.
$$
The space of cyclic words
$$
|D_A^\bullet| = D^\bullet_A/[D_A^\bullet, D_A^\bullet] = \bigoplus_{k=0}^\infty |D_A^k|
$$
carries the induced graded Lie bracket (the non-commutative analogue of the Schouten bracket on polyvector fields) \cite{vdB}. Note that $|D_A^0|=|A|$, $D_A^1={\rm Der}(A, A\otimes A)$ is the space of double derivations of $A$, $|D_A^1|={\rm Der}(A, A)$ is the Lie algebra of derivations of $A$, and $|D_A^2|$ is the space of double brackets on $A$. A double bracket $\Pi \in |D_A^2|$ is Poisson if and only if the non-commutative Schouten bracket vanishes: $[\Pi, \Pi]=0$. A Poisson double bracket $\Pi$ induces a differential $d_\Pi=[\Pi, \cdot]$ on $|D^\bullet_A|$.

There is a natural map 
$$
\partial: |D_A^k| \to {\rm Hom}_\mathbb{K}(|A|^{\otimes k}, |A|)
$$
defined by differentiating $k$ elements of $|A|$ by $k$ double derivations $\partial_i$ contained in an element of $|D_A^k|$. Note that the right hand side also carries a Schouten bracket and the map $\partial$ is a Lie homomorphism.

Let $E$ be the double derivation defined by formula $E(a)=a\otimes 1 - 1 \otimes a$ for every $a\in A$. Define the graded quotient  space $\mathcal{D}_A^\bullet$ as follows
\begin{align*}
    \mathcal{D}_A^\bullet = \operatorname{coker}( D_A^{\bullet-1} \overset{\alpha \mapsto |\alpha E|}{\longrightarrow} |D_A^\bullet|).
\end{align*}
That is, $\mathcal{D}_A^k=|D_A^k|/|D_A^{k-1} E|$. 

\begin{prop} \label{prop:E_acts_zero}
The map $\partial$ vanishes on $|D^\bullet_A E|$.
\end{prop}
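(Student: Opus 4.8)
The plan is to compute $\partial(|\alpha E|)$ directly from the definition of $\partial$ and to exhibit a single vanishing factor coming from the leg $E$. By linearity it suffices to treat $\alpha$ a monomial, and by cyclic invariance of $|\cdot|\colon D_A^\bullet \to |D_A^\bullet|$ the class $|\alpha E|$ is represented by a cyclic word in the $u_i$'s and $\partial_i$'s whose $k$ odd letters ($\partial$-legs) are those of $\alpha$ together with one extra leg, namely $E = \sum_i (u_i\partial_i - \partial_i u_i)$. I would first recall the explicit recipe for $\partial$ on such a representative: fixing arguments $|a_1|,\dots,|a_k| \in |A|$, each leg $\partial_{i_r}$ contracts one argument $a_r$, producing $\partial_{i_r}(a_r) = \sum a_r' \otimes a_r'' \in A\otimes A$, and all these outputs are assembled, together with the arcs of $u$'s lying between consecutive legs, into a single cyclic word in $A$. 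The structural point I would isolate is that the two tensor components produced by a single leg enter this cyclic word as one consecutive block, i.e. they are multiplied together. This is already visible for $k=2$, where $\partial(|P|)(|a|,|b|) = |\{\{a,b\}\}'\{\{a,b\}\}''|$ has each leg's two factors cyclically adjacent, and it persists for all $k$ by the locality of the assembly.

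Granting this, the computation is immediate. Let $a$ be the argument contracted by the distinguished leg $E$. Since $E$ is the double derivation with $E(a) = a\otimes 1 - 1\otimes a$, its consecutive block in the assembled cyclic word is the product of the two tensor components of $E(a)$, namely $\mu(E(a)) = a\cdot 1 - 1\cdot a = 0$, where $\mu$ denotes multiplication in $A$. Equivalently, the defining feature of $E$ is that it is the inner double derivation, characterized by $\mu\circ E = 0$. Thus the assembled cyclic word contains the factor $0$ and hence vanishes, for every choice of $|a_1|,\dots,|a_k|$; therefore $\partial(|\alpha E|) = 0$, which is the assertion.

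The hard part is purely bookkeeping: making the ``consecutive block'' structure of $\partial$ precise for all $k$ — pinning down the exact wiring of the assembly and checking that no arc of $u$'s, and no other leg, can separate the two outputs of a fixed leg — and controlling the Koszul signs generated by permuting the odd letters $\partial_i$ both when passing to the cyclic representative of $|\alpha E|$ and during the assembly. None of these signs affects the conclusion, since a block equal to $0$ annihilates the whole word regardless of its position or sign. As an independent confirmation of the mechanism, I would invoke the representation-variety picture: for every $N$, the trace maps $|A| \to \mathcal{O}(\mathrm{Rep}_N A)$, $|a|\mapsto \mathrm{tr}(a)$, intertwine $\partial$ with the usual contraction of polyvector fields, and under this correspondence $E$ becomes the fundamental vector field of the $\mathrm{GL}_N$-conjugation action. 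As trace functions are conjugation-invariant, the leg $E$ differentiates them to zero, so $\partial(|\alpha E|)$ vanishes on all representations; since the family over all $N$ is faithful enough to detect the operations built from $\partial$, this recovers the claim.
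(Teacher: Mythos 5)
Your proof is correct and rests on exactly the same mechanism as the paper's two-line argument: since $E(a)=a\otimes 1-1\otimes a$, the leg $E$ acts on its argument $|a|\in|A|$ by $\mu(E(a))=a-a=0$, so the assembled cyclic word contains a zero factor and the whole contraction vanishes. Your additional bookkeeping of the ``consecutive block'' structure and the representation-variety cross-check are consistent elaborations of the same idea, not a different route.
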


\begin{proof}
Since $E(a)=a \otimes 1 - 1 \otimes a$, we have $E(|a|) = a-a=0$. Hence, $|D^{k-1}_A E|$ acts by zero on $|A|^{\otimes k}$.
\end{proof}
Proposition \ref{prop:E_acts_zero} implies that the map 
$\partial$ descends to a map (which we denote by the same letter)
\begin{align}
    \partial: \mathcal{D}_A^\bullet &\longrightarrow {\rm Hom}_\K( |A|^\bullet, |A|).
\end{align}

\begin{prop} \label{prop:DE_ideal}
The subspace $|D^\bullet_A E| \subset |D_A^\bullet|$ is a Lie ideal under the Schouten bracket.
\end{prop}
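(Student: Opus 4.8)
The plan is to compute the Schouten bracket $[|\beta|,|\alpha E|]$ directly through van den Bergh's double bracket and show it again lands in $|D_A^\bullet E|$. Recall that the graded Lie bracket on $|D_A^\bullet|$ is induced from the double Schouten bracket $\{-,-\}$ on $D_A^\bullet$ by the rule $[|a|,|b|] = |m\{a,b\}|$, where $m\colon D_A^\bullet\otimes D_A^\bullet \to D_A^\bullet$ is the multiplication and $\{a,b\} = \{a,b\}'\otimes\{a,b\}''$ in Sweedler notation. The first preliminary step is to identify $E$ explicitly inside the algebra: from $E(u_j)=u_j\otimes 1 - 1\otimes u_j$ one checks that
$$
E = \sum_{i=1}^s \bigl(u_i\partial_i - \partial_i u_i\bigr)\in D_A^1 .
$$

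The heart of the argument is the following lemma: the double bracket against $E$ reproduces, up to sign, the canonical inner double derivation, i.e.
$$
\{b, E\} = \pm\,(b\otimes 1 - 1\otimes b) \qquad\text{for all } b\in D_A^\bullet .
$$
I would prove this by verifying it on the generators $u_k$ and $\partial_k$ using the defining brackets $\{\partial_i,u_j\}=\delta_{ij}\,1\otimes 1$, $\{u_j,\partial_i\}=-\delta_{ij}\,1\otimes 1$, and $\{u_i,u_j\}=\{\partial_i,\partial_j\}=0$ together with the Leibniz rule, and then extending to all of $D_A^\bullet$: both $\{-,E\}$ and the map $\mathcal{E}(b):=b\otimes 1 - 1\otimes b$ are double derivations in their argument for the inner bimodule structure (the latter by a one-line telescoping check), so agreement on generators forces agreement everywhere. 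The immediate and crucial consequence is that $m\{b,E\} = \pm(b - b) = 0$, which is consistent with the fact that $|E|=0$ in $|D_A^\bullet|$ by cyclic invariance.

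With the lemma in hand the main computation is short. Applying the Leibniz rule for $\{-,-\}$ in its second argument gives
$$
\{\beta, \alpha E\} = \{\beta,\alpha\}(1\otimes E) \pm (\alpha\otimes 1)\{\beta, E\},
$$
and composing with $m$ yields
$$
m\{\beta, \alpha E\} = \bigl(m\{\beta,\alpha\}\bigr)\,E \pm \alpha\,\bigl(m\{\beta, E\}\bigr) = \bigl(m\{\beta,\alpha\}\bigr)\,E,
$$
since the second term vanishes by the lemma. Taking $|\cdot|$ we conclude
$$
[\,|\beta|,\,|\alpha E|\,] = \bigl|\,(m\{\beta,\alpha\})\,E\,\bigr| \in |D_A^\bullet E|,
$$
which is exactly the ideal property.

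The step I expect to be most delicate is the sign bookkeeping in the graded (super) setting: the double Schouten bracket carries intrinsic degree $-1$ and $E$ is odd (it contains a single $\partial_i$), so the Leibniz rule and the cyclic antisymmetry both come with Koszul signs. In particular, verifying the lemma on the odd generator $\partial_k$ requires tracking these signs carefully so that one indeed obtains $\pm(\partial_k\otimes 1 - 1\otimes\partial_k)$ rather than a symmetric combination; the consistency check $m\{b,E\}=0$, equivalent to the cyclic identity $|E|=0$, serves as a useful guard against sign errors. Everything else is routine application of the van den Bergh calculus recalled above.
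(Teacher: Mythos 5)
Your proof is correct and follows essentially the same route as the paper: the paper also splits $[\,|a|,|Eb|\,]$ by the Leibniz rule and kills the problematic term via $\{\,|a|,E\}=-E(|a|)=0$, which is exactly the content of your key lemma that bracketing with $E$ yields the inner double derivation $b\mapsto b\otimes 1-1\otimes b$ and hence dies under multiplication. Your version merely makes explicit (via the check on generators and the inner-bimodule derivation property) what the paper asserts in one line.
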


\begin{proof}
We compute,
$$
[ |a|, |Eb| ] = |\{ |a|, E\} b| + |E \{ |a|, b\}| = |E \{ |a|, b\} | \in 
|D_A^\bullet E|,
$$
where we have used $\{ |a|, E\} = - E(|a|)=0$. This shows that $|D^\bullet_A E|$ is indeed a Lie ideal in $|D^\bullet_A|$.
\end{proof}
As a simple example, consider $|D_A^0 E|$. This space is spanned by elements of the form $|aE|$ which induce inner derivations on $A$: $\{ |aE|, b\} = ab- ba$. The Lie algebra $|D^1_A|$ is isomorphic to the Lie algebra derivations of $A$ and, hence, $\mathcal{D}^1_A$ is isomorphic to the Lie algebra of outer derivations of $A$.

 Proposition \ref{prop:DE_ideal} implies that the Schouten bracket descends to $\mathcal{D}^\bullet_A$ and makes the map $\partial$ above into a Lie homomorphism. If $\Pi \in |D^2_A|$ is a Poisson double bracket, the differential $d=[\Pi, \cdot]$ also descends to $\mathcal{D}_A^\bullet$ and defines a non-commutative Poisson cohomology theory.
 
 We now choose $A=\widehat{T}({\rm gr} H)$, where ${\rm gr} H$ is the associated graded of the first homology of the surface $\Sigma$ with generators $x_i, y_i$ of degree 1 and generators $z_j$ of degree 2. We consider the double bracket (see Section 5.1 in \cite{highergenus})
 $$
 \Pi = \sum_{i=1}^g |\partial_{x_i} \partial_{y_i}| + \sum_{j=1}^n |z_j \partial_{z_j} \partial_{z_j}|.
 $$
 The first term on the right hand side is the symplectic double bracket induced by the intersection pairing on the first homology, and the second term is the Kirillov-Kostant-Souriau (KKS) linear bracket corresponding to boundary components. It induces the associated graded of the Goldman Lie bracket. 
 
 \begin{rem}
 In this paper, we assume double brackets to be skew-symmetric. This explains the difference in the form of the bivector $\Pi$ above and the bivector $\Pi_{\rm gr}$ in \cite{highergenus} (where no skew-symmetry assumption was made).
 \end{rem}

The main result of this section is the following theorem:

\begin{thm}   \label{thm:Poisson_cohomology}
\begin{align*}
H^0(\mathcal{D}_A^\bullet, d= [\Pi, \cdot]) &= Z(|A|), \\
H^1(\mathcal{D}_A^\bullet, d= [\Pi, \cdot]) &= \bigoplus_i \K |\partial_{z_i}|.
\end{align*}
\end{thm}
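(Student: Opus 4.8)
The plan is to compute the cohomology of the complex $(\mathcal{D}_A^\bullet, d=[\Pi,\cdot])$ in degrees $0$ and $1$ by exploiting the Lie homomorphism $\partial\colon \mathcal{D}_A^\bullet \to \hom_\K(|A|^\bullet, |A|)$ together with the explicit description of $\Pi$. Since $H^0$ is by definition the kernel of $d$ on $\mathcal{D}_A^0 = |A|$, and $d|a| = [\Pi,|a|]$ corresponds under $\partial$ to the derivation $\{|a|,-\}$ induced by the double bracket, I would first observe that $d|a|=0$ means $|a|$ acts trivially on $|A|$ via the bracket $[\cdot,\cdot]_{\gr}$ induced by $\Pi$. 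By construction, this induced bracket is precisely the associated graded Goldman bracket, so $H^0 = Z(|A|) = Z(\gr\,\widehat{\mathfrak{g}}, [\cdot,\cdot]_{\gr})$, which by Theorem \ref{thm:center} equals $|\K[[\omega]]| \oplus \bigoplus_j |\K[[z_j]]_{\ge 1}|$. This settles the first line.

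For $H^1$, the computation is genuinely harder. A degree-$1$ cocycle is (the class of) a double derivation $\alpha \in |D_A^1|$, i.e.\ a derivation of $A$, with $d\alpha = [\Pi,\alpha]=0$; modulo the image of $d$ from $\mathcal{D}_A^0$, which consists of Hamiltonian (inner) derivations $\{|a|,-\} = [\Pi,|a|]$. I would first unwind $d\alpha=0$: writing $\alpha$ in terms of its components $\pa_{x_i}\alpha(x_i)$, etc., the cocycle condition says that $\alpha$ is a \emph{Poisson} derivation of the double bracket $\Pi$. The strategy is then to show that every such Poisson derivation is, modulo inner derivations, a combination of the ``scaling'' derivations $\pa_{z_i}$ (which rescale the Casimir directions $z_i$). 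The natural approach is to pass to associated graded / work degree by degree and use the tangentiality and symplectic structure: the symplectic part $\sum_i |\pa_{x_i}\pa_{y_i}|$ forces the $x_i,y_i$-components of a Poisson derivation to be Hamiltonian, while the KKS part $\sum_j |z_j\pa_{z_j}\pa_{z_j}|$ contributes the extra classes $|\pa_{z_i}|$.

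The main obstacle, and the step I expect to require real work, is the surjectivity/exhaustion direction for $H^1$: showing that a Poisson $1$-cocycle cannot produce any cohomology class beyond the span of the $|\pa_{z_i}|$. Concretely, given a derivation $\alpha$ with $[\Pi,\alpha]=0$, I must exhibit an element $|a|\in|A|$ (together with an allowed multiple of $\pa_{z_i}$) such that $\alpha - \sum_i c_i \pa_{z_i}$ equals the inner derivation $[\Pi,|a|]$ in $\mathcal{D}_A^1$. This is where the conjugation theorems of Section \ref{sec:second} enter: the structure of Poisson automorphisms preserving $\omega$ and the $z_j$ (as in Theorem \ref{thm:main1} and Theorem \ref{thm:main2}) controls the infinitesimal Poisson derivations, and I anticipate reducing the claim to the statement that a tangential Poisson derivation annihilating the Casimirs is inner. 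I would carry out this reduction by integrating $\alpha$ to a one-parameter family of Poisson automorphisms and applying Theorem \ref{thm:main2} to conclude that the automorphisms are inner up to the scaling symmetries generated by $\pa_{z_i}$, then differentiate back to obtain the desired description of $H^1$. The linear independence of the classes $|\pa_{z_i}|$ in $\mathcal{D}_A^1$ (they are not inner, nor in $|D_A^0 E|$) is a comparatively routine check that I would verify by evaluating $\partial$ on them.
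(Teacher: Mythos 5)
Your overall architecture is the same as the paper's (reduce to derivations of the graded Goldman bracket, show they are coboundaries up to the $|\partial_{z_i}|$), but both halves have concrete gaps. For $H^0$: the cocycle condition $d|a|=0$ in $\mathcal{D}_A^1=\der(A,A)/\operatorname{inn}(A,A)$ says that the derivation $\{|a|,\cdot\}$ of $A$ is \emph{inner}, which is a priori stronger than your reading that ``$|a|$ acts trivially on $|A|$'': inner derivations act by zero on $|A|$, but the converse is exactly the content of Theorem A.1 of \cite{genus0}, which the paper invokes and which your argument silently assumes. Without that input you only get $H^0\subseteq Z(|A|)$, not equality.

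For $H^1$ the gap is more serious. Your mechanism --- integrate a Poisson derivation $\alpha$ to a one-parameter family of automorphisms and apply Theorem \ref{thm:main2} --- only makes sense for the components of $\alpha$ of positive weight, because Theorem \ref{thm:main2} requires $\gr F=\mathrm{id}$; the weight $0$ and weight $(-1)$ components (linear symplectic and translation parts) do not exponentiate to such automorphisms, and the weight $(-2)$ part is precisely where the nontrivial classes sit. You would also need the conjugating elements produced by Theorem \ref{thm:main2} to depend differentiably on the parameter in order to differentiate back, which is not part of that (purely existential) statement. The paper sidesteps both problems by proving the infinitesimal statement directly: Lemma \ref{lem:fully_tangential} shows that any $\phi\in\der^{\geq-1}(A)$ preserving the graded Goldman bracket preserves its center, and Propositions \ref{prop:A2} and \ref{prop:A2sympl} together with a short weight argument then force $\phi(z_j)=[a_j,z_j]$ and $\phi(\omega)=[\omega,l]$. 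Finally, your reduction stops at ``a tangential Poisson derivation annihilating the Casimirs is inner,'' but the coboundaries here are \emph{Hamiltonian} derivations $\{|a|,\cdot\}$ modulo inner ones, and the needed statement --- every special derivation is Hamiltonian --- is a substantive input quoted from \cite[\S 5.1]{highergenus} that your proposal does not supply. Your treatment of the classes $|\partial_{z_i}|$ themselves (cocycles because $|\partial_{z}\partial_{z}|=0$ by oddness, non-coboundaries by the weight bound on the image of $d$) agrees with the paper and is indeed the routine part.
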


\begin{rem}
Let ${\rm Rep}(A, N) = {\rm Hom}(A, \operatorname{End}(\K^N))$ be the space of $N$-dimensional representations of $A$. In \cite{vdB}, van den Bergh constructs a Lie homomorphism $|D_A^\bullet| \to T_\text{poly}({\rm Rep}(A, N))^{GL_N}$. Under this correspondence, elements of the form $|\alpha E|$ are in the image of the map $ (\mathfrak{gl}_N \otimes T_\text{poly}^{\bullet-1} \to T_\text{poly}^\bullet )^{GL_N}$ given by the $\mathfrak{gl}_N$ action by conjugation. Hence,  $\mathcal{D}_A^\bullet$ maps to polyvector fields on the quotient space ${\rm Rep}(A, N)/GL_N$ and the non-commutative Poisson cohomology $H(\mathcal{D}_A^\bullet, d= [\Pi, \cdot])$ maps to the Poisson cohomology of ${\rm Rep}(A, N)/GL_N$.
\end{rem}
 
\begin{proof}[Proof of Theorem \ref{thm:Poisson_cohomology}]
In degree zero, the cohomology of $d$ are elements $|a| \in |A| =\mathcal{D}_A^0$ such that $\{ |a|, \cdot \}$ is an inner derivation of $A$. By Theorem A.1 in \cite{genus0}, this is equivalent to $|a|$ being in the center of the associated graded of the Goldman bracket, as required.

In degree one, let $u \in |D_A^1|$ and $[u] \in \mathcal{D}_A^1 = \der(A,A)/ \operatorname{inn}(A,A)$ such that $d([u]) = 0$. That is, the class of $[\Pi, u]$
in $\mathcal{D}_A^2$ vanishes and therefore $\partial([\Pi, u])=[\partial(\Pi), \partial(u) ]=0$. Then, $u$  induces the derivation $\partial(u)$ of the Lie bracket $ \partial(\Pi): |A| \otimes |A| \to |A|$. 

Assume that $u \in \der^{\geq -1}(A)$. Then, by Lemma \ref{lem:fully_tangential}, $u$ is tangential and $u(\omega)=[\omega, l] $ for some $l$. Hence, $u$ is special up to an inner derivation. As shown in \cite[\S 5.1]{highergenus}, any special derivation is Hamiltonian. That is, there exists $|a| \in |A|$ such that $u = \{|a|, \cdot\}$ which implies $[u]=d |a|$. 

If $u$ is of degree $(-2)$, it is of the form $\sum_j \lambda_j |\partial_{z_j}|$. That is, $u(x_i)=u(y_i)=0, u(z_j)=\lambda_j$. Note that 
$$
[ |z \partial_z \partial_z|, |\partial_z| ] = |\partial_z \partial_z|=0,
$$
where we have used the fact that $\partial_z$ is odd. Note also that the image of $d: \mathcal{D}_A^0 \to \mathcal{D}_A^1$ is a graded vector space with degree bounded from below by $(-1)$. Hence, all the derivations of degree $(-2)$ define nontrivial cocycles, as required.
\end{proof}

Assume $(n,g) \neq (1,0)$.
Recall that a derivation $\phi \in \der(A)$ is called tangential if 
$\phi(z_i)=[a_i, z_i]$ for all $i=1, \dots, n$ for some $a_i \in A$. 
Set $z_0 = -\omega = -\omega_0 - \sum_{j=1}^n z_j$. We say that $\phi \in \der(A)$ is fully tangential if it is tangential and in addition 
$\phi(z_0)=[a_0, z_0]$ for some $a_0 \in A$.

\begin{lem}  \label{lem:fully_tangential}
Assume that  $\phi \in \der^{\geq -1}(A)$ induces a derivation of the graded Goldman bracket on $|A|$. Then, $\phi$ is a fully tangential derivation.
\end{lem}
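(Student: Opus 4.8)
The plan is to exploit the description of the center $Z(\gr\,\widehat{\mathfrak g},[\cdot,\cdot]_\gr)=|\,\K[[\omega]]\,|\oplus\bigoplus_{j=1}^n|\,\K[[z_j]]_{\geq1}\,|$ from Theorem \ref{thm:center}. A derivation $\phi$ of the graded Goldman bracket must preserve its center, so I would first show that $\phi$ maps each center generator $|z_j|$ (for $j\geq 1$) and $|\omega|$ back into the center. The key observation is that $|z_j|$ and $|\omega|$ are the \emph{lowest-degree} central elements: since $\phi\in\der^{\geq -1}(A)$ raises degree by at least $-1$, and $|z_j|$, $|\omega|$ sit in degree $2$, the image $\phi(|z_j|)$ lands in degree $\geq 1$ inside the center. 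Inspecting the graded pieces of $Z$, the degree-$1$ part is zero (both $\K[[\omega]]$ and $\K[[z_j]]_{\geq 1}$ start in degree $2$), so in fact $\phi(|z_j|)$ and $\phi(|\omega|)$ lie in the span of the degree-$2$ and higher central elements. This linear-algebra bookkeeping is routine.

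The heart of the argument is to promote the cyclic-word statements $\phi(|z_j|)\in Z$ into the tensor-algebra statements $\phi(z_j)=[a_j,z_j]$. For each fixed $j$, the condition that $\phi(|z_j|)$ is central with respect to the Goldman bracket translates, via the double-bracket formalism and the explicit form of $\Pi$, into a normalizer condition. I would argue that $\phi(z_j)$ commutes modulo $\widehat{L}$-brackets with $z_j$ in a precise sense, and then invoke the normalizer computation \eqref{eq:norm}, namely $\{a\in\widehat{T}:[z_j,a]\in\K[[z_j]]\}=\K[[z_j]]$, together with Lemma \ref{lem:n3}, $\{a\in\widehat{T}:[z_j,a]\in\widehat{L}\}=\widehat{L}+\K[[z_j]]$. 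The point is that $z_j\in V\setminus\{0\}$ (it is a degree-$2$ generator, hence reduced in the sense of \cite{highergenus}), so these lemmas apply directly; for $\omega$ with $n=0$ one uses instead the symplectic case $\omega_0\in\wedge^2 V\setminus\{0\}$ covered by the same lemmas. From $\phi(z_j)$ being a primitive element whose bracket with $z_j$ has the right form, one extracts $a_j$ with $\phi(z_j)=[a_j,z_j]$, giving tangentiality.

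For full tangentiality I would repeat the argument with the $0$th boundary, using that $\phi(|\omega|)\in Z$ and $z_0=-\omega$; here the hypothesis $(g,n)\neq(1,0)$ should guarantee that $\omega$ is nonzero and that the relevant normalizer is exactly $\K[[\omega]]$, so that Lemma \ref{lem:n3} delivers $\phi(z_0)=[a_0,z_0]$. The main obstacle I anticipate is the translation step in the previous paragraph: going from the \emph{abelianized} (cyclic-word) centrality condition on $\phi(|z_j|)$ to the honest \emph{tensor-algebra} identity $\phi(z_j)=[a_j,z_j]$ requires carefully lifting through the projection $|\cdot|\colon\widehat{T}\to|\widehat{T}|$ and controlling the double-derivation $\partial$. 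This is exactly where the reduced-element machinery of \eqref{eq:norm} and Lemma \ref{lem:n3} must be applied with care, and where the degree hypothesis $\phi\in\der^{\geq -1}$ is essential to rule out spurious lower-degree contributions.
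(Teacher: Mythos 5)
Your overall strategy (preserve the center, then upgrade to tangentiality) matches the paper's, but the core step is not carried out by the tools you propose, and there is a genuine gap there. The centrality of $\phi(|z_l|)$ alone only tells you that $|\phi(z_l)|\in Z$, i.e.\ that $\phi(z_l)$ agrees with $\sum_j f_j(z_j)$ modulo \emph{arbitrary} commutators $[\widehat{T},\widehat{T}]$; it says nothing about $\phi(z_l)$ being a commutator \emph{with $z_l$ specifically}. The normalizer statements \eqref{eq:norm} and Lemma \ref{lem:n3} require as input a condition of the form $[z_l,a]\in\K[[z_l]]$ or $[z_l,a]\in\widehat{L}$, and nothing in your argument produces such a condition from centrality. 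What the paper does instead is use that $\phi$ preserves the center applied to \emph{all powers} $|z_l^N|$: this yields $N|z_l^{N-1}\phi(z_l)|=\sum_{j}|f_j(z_j)|$ for every $N$; setting $z_l=0$ isolates the $j=l$ term, giving $|z_l^{N-1}(\phi(z_l)-\tfrac1N f_l)|$ controlled for all $N\ge 3$; and then the division Propositions \ref{prop:A2} and \ref{prop:A2sympl} (``$|az^l|=0$ for all large $l$ implies $a\in[z,\widehat{T}]$''), not the normalizer lemmas, convert this family of cyclic-word identities into $\phi(z_l)=[a_l,z_l]+g_l(z_l)$. This is the missing idea in your proposal.

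A second, smaller gap: even after obtaining $\phi(z_l)=[a_l,z_l]+g_l(z_l)$, you must eliminate the tails $g_l$. The paper does this by plugging into the relation $|\sum_{j=0}^n z_j|=0$ to force each $g_l$ to be at most linear, then ruling out the remaining scalar $\lambda$ (a simultaneous rescaling of all generators) by observing that the resulting Euler-type derivation $\psi$ would have to preserve the graded Goldman bracket, contradicting the fact that the bracket has weight $-2$ unless $(n,g)=(1,0)$. This is exactly where the hypotheses $\phi\in\der^{\geq-1}(A)$ and $(n,g)\neq(1,0)$ are actually used; your proposal invokes the degree hypothesis only for a preliminary bookkeeping step that is not where it is needed.
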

\begin{proof}
Let $\phi \in \der(A)$ be such that it induces a derivation of the graded Goldman bracket. Then, it preserves the center of the graded Goldman Lie algebra. Recall that the center is spanned by elements $|{z_j}^k|$ for $j = 0, \ldots, n$ and $k\ge 0$. In particular, for all $l$ and $N$ we have
\begin{align*}
    |\phi(z_l^N)| = N |z_l^{N-1} \phi(z_l)| = \sum_{j=0}^n |f_j(z_j)|,
\end{align*}
for some $f_j \in \K [[s]]$ of degree at least $N-1$. Setting $z_l = 0$ in the equation above, we obtain for $N\geq 2$
\begin{align*}
 \sum_{j \neq l} |f_j(z_j)| =0.
\end{align*}
Since $|{z_j}^k|$ are linearly independent for $k \geq 2$, we obtain that for $N \geq 3$,
\begin{align*}
    N |z_l^{N-1} \phi(z_l)| = |f_l(z_l)|.
\end{align*}
Using Propositions \ref{prop:A2} and \ref{prop:A2sympl} we conclude that $\phi(z_l)$ is of the form
\begin{align*}
    \phi(z_l) = [a_l, z_l] + g_l(z_l)
\end{align*}
for some $a_l \in A$ and $g_l \in \K[[s]]$.
Using the relation $|\sum_{j=0}^n z_j|=0$ we obtain that
\begin{align*}
    0 = \left| \phi(\sum_{j=0}^n z_j) \right| = \left| \sum_{j=0}^n  ([a_j, z_j] + g_j(z_j)) \right|= \left| \sum_{j=0}^n g_j(z_j) \right|.
\end{align*}
This equality implies that functions $g_j(z_j)$ are at most linear.
Since derivations of weight $(-2)$ were excluded by assumptions, we have $g_j(z_j)= 2\lambda z_j$ for some $\lambda \in \mathbb{K}$. 
Then, define a derivation $\psi$ of $A$ by
\begin{align*}
    \psi(x_i) &=  \lambda x_i, \quad i=1,\ldots,g \\
    \psi(y_i) &= \lambda y_i, \quad i=1,\ldots,g \\
    \psi(z_j) &= 2 \lambda z_j, \quad j=1,\ldots,n.
\end{align*}
The difference $\phi - \psi$ is now fully tangential, hence preserves the graded Goldman bracket, and thus $\psi$ preserves the graded Goldman bracket. This implies that the graded Goldman bracket is of weight  zero. Since it is actually of weight $(-2)$, this is a contradiction unless the Goldman bracket vanishes identically (which is only the case if $(n,g)=(1,0)$).
\end{proof}

\section{Proof of Proposition \ref{prop:aomega}}
\label{sec:Proposition_3_10}

In the proof of Proposition \ref{prop:aomega},
it will be convenient to identify 
$|\widehat{T}|$ with a vector subspace of cyclically invariant elements of $\widehat{T}$ through
the embedding $|\widehat{T}| \hookrightarrow \widehat{T}$
defined by 
$$
|x_1x_2\cdots x_l| \mapsto \sum^{l-1}_{k=0}\nu^k(x_1x_2\cdots x_l).
$$
Here $x_k \in V$ for $1 \leq k \leq l$ and $\nu$ is the cyclic permutation:
$$
\nu: x_1x_2\cdots x_l \mapsto x_2\cdots x_l x_1.
$$
Recall that for $l = 0,1$ we have $|V^{\otimes l}| = V^{\otimes l}$. 

Let ${\rm dim} V=2g$ be the dimension of the symplectic vector space $V$ and $C: V^{\otimes 2} \to \mathbb{K}$ the non-degenerate  pairing defined by the symplectic form $\omega_0$.
Denote $Q={\rm Ker}(C)$ and let $\pi: V^{\otimes 2} \to Q \subset V^{\otimes 2}$
be the projection corresponding to the direct sum decomposition $V^{\otimes 2} = Q \oplus \mathbb{K} \omegaz$.

In what follows, we use the following simple facts: 
$C(\omegaz) = 2g$, $\pi(\omegaz)=0$, and
\begin{equation}
\label{eq:CllC}
(C^{\otimes l} \otimes 1_V) (x\omegaz^l) = (1_V \otimes C^{\otimes l}) (\omegaz^l x) = (-1)^l x
\end{equation}
for any $x\in V$ and $l\ge 1$.

First, we prove Proposition \ref{prop:aomega} for $m=0,1,2,3$.

{\it The case $m=0$.}
Since $|\omegaz^l | = l (\omegaz^l + \nu (\omegaz^l))$,
\[
C^{\otimes l} |\omegaz^l| = l ( (2g)^l + (-1)^l 2g ).
\]
If $l\ge 2$, the right hand side of this equation is nonzero.
Hence, $|a\omegaz^l|=0$ implies $a=0$, as required.

{\it The case $m=1$.}
If $\deg a = 1$, then
$|a\omegaz^l| = \sum_{j=0}^l \omegaz^j a \omegaz^{l-j} + \sum_{j=0}^{l-1} \nu(\omegaz^j a\omegaz^{l-j})$.
Since
\[
1_V \otimes C^{\otimes l} :
\omegaz^j a \omegaz^{l-j} \mapsto (-1)^j (2g)^{l-j} a,
\quad
\nu(\omegaz^j a \omegaz^{l-j}) \mapsto (-1)^{l-j} (2g)^j a,
\]
we have
\begin{align*}
(1_V \otimes C^{\otimes l}) |a\omegaz^l| & = \sum_{j=0}^l (-1)^j (2g)^{l-j}a + \sum_{j=0}^{l-1} (-1)^{l-j} (2g)^j a \\
& = \left( (2g)^l + 2 (-1)^l \sum_{j=0}^{l-1} (-2g)^j \right) a.
\end{align*}
If $l\ge 2$, the coefficient of $a$ is not zero.
Hence, $|a\omegaz^l|=0$ (for $l$ sufficiently large) implies $a=0$.

{\it The case $m=2$.}
If $\deg a = 2$, then
$|a\omegaz^l| = \sum_{j=0}^l \omegaz^j a \omegaz^{l-j} + \sum_{j=0}^l \nu (\omegaz^j a \omegaz^{l-j})$.
We have
\[
\pi \otimes C^{\otimes l} :
\begin{cases}
a\omegaz^l \mapsto (2g)^l \pi(a) & \\
\omegaz^j a \omegaz^{l-j} \mapsto 0 & \text{for any $1\le j\le l$} \\
\nu(\omegaz^j a \omegaz^{l-j}) \mapsto (-1)^l \pi(\nu(a)) & \text{for any $0\le j\le l$}
\end{cases}
\]
Here, the second case follows from $\pi(\omegaz)=0$ and
the third case from \eqref{eq:CllC}.
Therefore,
\[
(\pi \otimes C^{\otimes l})|a\omegaz^l| = (2g)^l \pi(a) + (l+1) (-1)^l \pi(\nu(a)).
\]
Now assume that $|a\omegaz^l|=0$ for any $l\ge p$.
Then, the right hand side of the above formula vanishes for any $l\ge p$, and this shows that $\pi(a)=0$ and $\pi(\nu(a))=0$.
The equation $\pi(a)=0$ implies that $a$ is a multiple of $\omegaz$. From the case $m=0$, we deduce that $a=0$.

To consider the case of $m=3$, we need the following lemma.
\begin{lem}
\label{lem:H3}
$V^{\otimes 3} = [\omegaz,V] \oplus (V\otimes Q)$.
\end{lem}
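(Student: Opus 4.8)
The plan is to realize both summands as pieces of a single contraction map and then to finish by a dimension count together with a transversality check, so that no direct manipulation of monomials in $V^{\otimes 3}$ is needed. Let $\phi := 1_V \otimes C \colon V^{\otimes 3} \to V$ be the map contracting the last two tensor factors. Since $Q = \ker C$ by definition, one has $V \otimes Q = V \otimes \ker C = \ker \phi$, which identifies the second summand as $\ker\phi$. The map $\phi$ is surjective (already $\phi(x\omegaz) = C(\omegaz)\,x = 2g\,x$ exhausts $V$), so $\dim(V\otimes Q) = \dim V^{\otimes 3} - \dim V = (2g)^3 - 2g$.

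Next I would evaluate $\phi$ on the first summand. Using \eqref{eq:CllC} with $l=1$, namely $(1_V \otimes C)(\omegaz x) = -x$, together with $\phi(x\omegaz) = C(\omegaz)\,x = 2g\,x$, I compute
\[
\phi([\omegaz, x]) = \phi(\omegaz x) - \phi(x\omegaz) = -x - 2g\,x = -(2g+1)\,x .
\]
Because $\K$ has characteristic zero, $2g+1 \neq 0$, so the composite $x \mapsto [\omegaz,x] \mapsto \phi([\omegaz,x]) = -(2g+1)x$ is a bijection of $V$. This forces the map $x \mapsto [\omegaz, x]$ to be injective, whence $\dim [\omegaz, V] = 2g$, and it shows that $\phi$ restricts to an isomorphism $[\omegaz,V] \overset{\cong}{\to} V$. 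In particular $[\omegaz, V] \cap \ker\phi = [\omegaz, V]\cap (V\otimes Q) = 0$.

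To conclude, the subspace $[\omegaz, V]$ meets $\ker\phi = V\otimes Q$ trivially and maps isomorphically onto the cokernel $V$ of $\phi$, so it is a complement of $\ker\phi$ in $V^{\otimes 3}$. Equivalently, the dimensions add up,
\[
\dim[\omegaz,V] + \dim(V\otimes Q) = 2g + \big((2g)^3 - 2g\big) = (2g)^3 = \dim V^{\otimes 3},
\]
and the intersection vanishes; hence $V^{\otimes 3} = [\omegaz, V]\oplus (V\otimes Q)$. I do not expect a serious obstacle, as the argument is pure linear algebra. The only point requiring care is the choice of contraction: it is essential to contract the \emph{last two} factors, since this is exactly the map whose kernel is $V\otimes Q$ while still taking the nonzero value $-(2g+1)x$ on $[\omegaz,x]$, which is what makes the transversality of the two summands transparent.
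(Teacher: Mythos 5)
Your proof is correct and follows essentially the same route as the paper: both arguments apply the contraction $1_V\otimes C$ (using \eqref{eq:CllC} and $C(\omegaz)=2g$) to show that it kills $V\otimes Q$ while acting on $[\omegaz,x]$ as multiplication by $-(2g+1)\neq 0$, and then conclude by a dimension count. Your version merely makes the dimension bookkeeping and the injectivity of $x\mapsto[\omegaz,x]$ explicit, which the paper leaves to the reader.
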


\begin{proof}
Let $a\in [\omegaz, V] \cap (V\otimes Q)$.
Since $a\in V\otimes Q$, $(1_V \otimes C)(a)=0$.
On the other hand, there is an element $b\in V$ such that
$a=[\omegaz,b]=\omegaz b - b\omegaz$, and
$(1_V\otimes C)(a)= -b -(2g)b=-(2g+1)b$.
Therefore, $b=0$ and $a=0$.
Thus $[\omegaz, V] \cap (V\otimes Q) = \{ 0\}$.
By counting dimensions, the assertion follows.
\end{proof}

{\it The case $m=3$.}
In view of Lemma \ref{lem:H3}, it is sufficient to prove the
following assertion:
let $a\in V\otimes Q$ and assume that there exists some $p\ge 1$ such that $|a\omegaz^l| = 0$ for any $l\ge p$. Then $a=0$.

Assume $a \in V\otimes Q$ and introduce the notation $a=a_1a_2a_3$.
Let us apply $1_V \otimes \pi \otimes C^{\otimes l}$ to 
\[
|a\omegaz^l | =
\sum_{j=0}^l \omegaz^j a \omegaz^{l-j} + \sum_{j=0}^l \nu(\omegaz^j a \omegaz^{l-j}) + a_2a_3 \omegaz^l a_1.
\]
Since
\[
1_V \otimes \pi \otimes C^{\otimes l} :
\begin{cases}
a\omegaz^l \mapsto (2g)^l a & \\
\omegaz^j a \omegaz^{l-j} \mapsto 0 & \text{for any $1\le j\le l$} \\
\nu(a\omegaz^l) \mapsto (-1)^l (1_V \otimes \pi)(a_3a_1a_2) & \\
\nu(\omegaz^j a \omegaz^{l-j}) \mapsto 0 & \text{for any $1\le j\le l$} \\
a_2a_3\omegaz^l a_1 \mapsto (-1)^l (1_V\otimes \pi)(a_2a_3a_1)
\end{cases},
\]
we have
\[
0=(1_V \otimes \pi \otimes C^{\otimes l})|a\omegaz^l| =
(2g)^l a + (-1)^l (1_V\otimes \pi) (a_3a_1a_2 + a_2a_3a_1).
\]
Since this equality holds true for any $l\ge p$, we deduce that
$a=0$.

The case $m\ge 4$ can be solved inductively based on the following proposition.

\begin{prop}
\label{prop:QHQ}
Let $m\ge 4$, $a\in Q\otimes V^{\otimes m-4} \otimes Q$, and
$b\in V^{\otimes m-2}$.
Assume that there exists some $p\ge 1$ such that $|a\omegaz^l +b \omegaz^{l+1}| =0$ for all $l \geq p$.
Then, $a=0$.
\end{prop}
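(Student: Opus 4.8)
The plan is to mimic the treatment of the low-degree cases $m=0,1,2,3$: apply a carefully chosen contraction operator to the relation $|a\omegaz^l + b\omegaz^{l+1}| = 0$ and isolate $a$ with a coefficient that grows exponentially in $l$, while showing that every remaining contribution stays subexponential. Working inside $\widehat{T}$ via the embedding of $|\widehat{T}|$ as cyclically invariant tensors fixed at the start of this section, and using that $a \in Q\otimes V^{\otimes m-4}\otimes Q$, I would apply
$$
\Phi_l := \pi \otimes 1_V^{\otimes(m-4)} \otimes \pi \otimes C^{\otimes l} \colon V^{\otimes(m+2l)} \to V^{\otimes m}.
$$
The two outer copies of $\pi$ are placed so as to act on the two $Q$-factors at the ends of $a$, on which $\pi$ restricts to the identity, while $C^{\otimes l}$ is meant to contract the block $\omegaz^l$.

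First I would record the contribution of the \emph{aligned} rotation. For $a\omegaz^l$ with no cyclic shift, $\pi$ acts as the identity on the $Q$-ends of $a$, the middle identities preserve $V^{\otimes(m-4)}$, and $C^{\otimes l}(\omegaz^l) = (2g)^l$ since $C(\omegaz)=2g$; this rotation therefore contributes exactly $(2g)^l a$. By contrast, the un-shifted $b$-term already dies: in $b\omegaz^{l+1}$ the slots $m-1,m$ are occupied by the first $\omegaz$, and $\pi(\omegaz)=0$. The aligned rotation of $a\omegaz^l$ is thus the only source of exponential growth.

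The heart of the argument is to show that every other rotation of both $a\omegaz^l$ and $b\omegaz^{l+1}$ contributes a tensor that is bounded, or at worst polynomial, in $l$. Two mechanisms are at play. If a cyclic shift is by an \emph{even} amount, so that the $C$-pairs could align in phase with the $\omegaz$-blocks and accumulate a factor $(2g)^{\#}$, then for large $l$ one of the two $\pi$-slots, namely the pair at positions $m-1,m$, falls on a complete $\omegaz$-block and $\pi(\omegaz)=0$ annihilates the term; this is exactly where the hypothesis $a\in Q\otimes V^{\otimes m-4}\otimes Q$ and the placement of the two $\pi$'s are used. If instead the shift is by an \emph{odd} amount, the window of $2l$ contracted slots meets the $\omegaz$-blocks out of phase, so every contraction there is of the staircase type governed by computations as in \eqref{eq:CllC} and produces only a sign together with the transport of a single letter; the resulting tensor is then fixed, independent of $l$, up to such signs. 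In either case the contribution is a fixed tensor times a bounded factor, and there are only $O(l)$ rotations in all.

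Collecting terms, the relation $\Phi_l|a\omegaz^l + b\omegaz^{l+1}| = 0$ reads $(2g)^l a = -R_l$ with $R_l \in V^{\otimes m}$ of norm bounded by a polynomial in $l$; since $V$ is a nonzero symplectic space we have $2g\ge 2$, so dividing by $(2g)^l$ and letting $l\to\infty$ forces $a=0$. The hard part will be precisely the combinatorial bookkeeping of the third paragraph: one must verify, uniformly over all rotations of \emph{both} the $a$- and $b$-blocks, the dichotomy ``even shift $\Rightarrow$ killed by $\pi(\omegaz)=0$; odd shift $\Rightarrow$ only signs,'' and in particular check that no rotation other than the aligned one survives with a factor of order $(2g)^{cl}$, $c>0$. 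Showing that the extra $\omegaz$ carried by $b\omegaz^{l+1}$ can never be contracted so as to both survive the two $\pi$'s and contribute at exponential order is the key point that renders the $b$-term harmless.
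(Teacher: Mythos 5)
Your proposal follows essentially the same route as the paper: the paper applies exactly your operator $(\pi\otimes 1_V^{\otimes m-4}\otimes\pi)\circ(1_V^{\otimes m}\otimes C^{\otimes l})$ to $u=a+b\omegaz$, packages your ``combinatorial bookkeeping'' into Lemmas \ref{lem:Codd} and \ref{lem:Ceven} (which yield $(2g)^l a$ plus remainder terms that are constant, resp.\ linear, in $l$, exactly matching your claimed dichotomy), and then extracts $a=0$ by a determinant argument over three large values of $l$. The only adjustment needed is that over a general field $\K$ of characteristic zero your appeal to a ``norm'' and the limit $l\to\infty$ should be replaced by the linear independence of the $\Q$-valued sequences $(2g)^l$, $l(-1)^l$, $(-1)^l$ --- which is precisely the paper's determinant step.
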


{\it The case $m\ge 4$.}
Let $a\in V^{\otimes m}$ and assume that
$|a\omegaz^l| = 0$ for any $l\ge p$.
By the direct sum decomposition $V^{\otimes 2}=Q \oplus \mathbb{K} \omegaz$, we can uniquely write
\[
a=\omegaz b' + b'' \omegaz + c,
\]
where $b'\in V^{\otimes m-2}$, $b'' \in Q\otimes V^{\otimes m-4}$, and $c \in Q\otimes V^{\otimes m-4} \otimes Q$.
For any $l\ge p$,
\[
0=|a\omegaz^l| = |(\omegaz b' + b'' \omegaz + c) \omegaz^l|
= | c \omegaz^l + (b' + b'')\omegaz^{l+1} |.
\]
Applying Proposition \ref{prop:QHQ}, we obtain $c=0$,
$a=\omegaz b' + b'' \omegaz$, and $|(b'+b'')\omegaz^l | =0$ for any $l\ge p+1$.
By the inductive assumption, there exists some $d\in V^{\otimes m-4}$ such that $b'+b''=[\omegaz,d]$.
Then
\[
a=\omegaz b' + ([\omegaz,d]-b')\omegaz =
[\omegaz,b'] + [\omegaz, d] \omegaz = [\omegaz, b'+d\omegaz],
\]
as required.
This completes the proof of Proposition \ref{prop:aomega}.

Finally, let us prove Proposition \ref{prop:QHQ}.
We use the following two lemmas, which can be proved by straightforward computations.

\begin{lem}
\label{lem:Codd}
Let $m$ be an odd integer $\ge 5$ and $l\ge (m+1)/2$.
For any $u_1,u_2,\ldots,u_m \in V$, we have
\begin{align*}
& (\pi \otimes {1_V}^{\otimes m-4} \otimes \pi) ({1_V}^{\otimes m} \otimes C^{\otimes l}) |u_1u_2\cdots u_m \omegaz^l| \\
=& (\pi \otimes {1_V}^{\otimes m-4} \otimes \pi)
( (2g)^l u_1u_2\cdots u_m + (-1)^l \Phi(u)),
\end{align*}
where
\begin{align*}
&\Phi(u) =
\sum_{\substack{1\le k\le m-1 \\ k\, {\rm odd}}}
(-1)^{\frac{k-1}{2}}
\left(
{\rm Cont}(u_{m-k+1}\cdots u_{m-1})\, u_m \omegaz^{\frac{k-1}{2}} u_1 u_2 \cdots u_{m-k} \right. \\
& \hspace{15em} \left. +{\rm Cont}(u_2\cdots u_k)\, u_{k+1}\cdots u_m \omegaz^{\frac{k-1}{2}} u_1 \right).
\end{align*}
Here, ${\rm Cont}(u_2\cdots u_k)=C(u_2,u_3)\cdots C(u_{k-1},u_k) \in \K$.
\end{lem}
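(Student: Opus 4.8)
The plan is to expand the cyclic word $|u_1\cdots u_m\omegaz^l|$ as the sum of its $m+2l$ cyclic rotations $\sum_{k=0}^{m+2l-1}\nu^k(u_1\cdots u_m\omegaz^l)$, apply $1_V^{\otimes m}\otimes C^{\otimes l}$ to each rotation, and then use $\pi\otimes 1_V^{\otimes m-4}\otimes\pi$ to discard the many rotations that do not contribute. I would organize the rotations according to whether the reading starts inside the block $u_1\cdots u_m$ (the $u$-shifts, parametrized by $s$ with $0\le s\le m-1$) or inside the block $\omegaz^l$ (the $\omegaz$-shifts, parametrized by $t$ with $0\le t\le 2l-1$). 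For a fixed shift one reads off which of the $m+2l$ tensor slots land in the kept region (the first $m$ slots) and which land in the contracted region (the last $2l$ slots); the hypothesis $l\ge(m+1)/2$, equivalently $2l\ge m+1>s$, guarantees that the $u$-block never wraps around inside $\omegaz^l$, so this accounting is unambiguous.

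The first observation is that the trivial rotation $s=0$ keeps $u_1\cdots u_m$ and contracts $\omegaz^l$; since $C(\omegaz)=2g$ this produces exactly the main term $(2g)^l u_1\cdots u_m$. The second observation, which does most of the work, is that $\pi$ annihilates $\omegaz$, so any rotation whose kept region begins or ends with a full, unbroken copy of $\omegaz$ dies under $\pi\otimes 1_V^{\otimes m-4}\otimes\pi$. I expect to check that this kills every even $u$-shift with $s\ge 2$ (whose kept region terminates in $\omegaz^{s/2}$) and all but finitely many $\omegaz$-shifts (whose kept region is almost entirely assembled from undisturbed copies of $\omegaz$). What remains are the odd $u$-shifts $s=k$ and their mirror images among the $\omegaz$-shifts, namely $t=2l-k$ with $k$ odd.

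For these surviving rotations the contraction becomes a telescoping computation governed by the identities in \eqref{eq:CllC}. In an odd $u$-shift $s=k$ exactly one copy of $\omegaz$ straddles the keep/contract boundary; chaining the relevant factors of $C$ through this straddling copy and the remaining contracted copies of $\omegaz$ and invoking $(1_V\otimes C^{\otimes j})(\omegaz^j x)=(-1)^j x$, I would show that $u_1$ is swallowed by the $\omegaz$-chain while $u_2,\dots,u_k$ pair off into ${\rm Cont}(u_2\cdots u_k)$; the residual tensor is $u_{k+1}\cdots u_m\,\omegaz^{(k-1)/2}u_1$ and the chain emits the sign $(-1)^{\,l-(k-1)/2}=(-1)^l(-1)^{(k-1)/2}$, which is precisely the second summand of $\Phi(u)$ weighted by the global factor $(-1)^l$. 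The mirror shifts $t=2l-k$ are treated symmetrically, now using $(C^{\otimes j}\otimes 1_V)(x\omegaz^j)=(-1)^j x$, and reproduce the first summand of $\Phi(u)$. Since $m$ is odd, $k$ ranges only over odd values at most $m-2$, so the two families never produce the same tensor and nothing is counted twice.

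The main obstacle is the bookkeeping rather than any single idea: keeping the slot-by-slot alignment of the $C$-pairings with the $\omegaz$-factors and the $u$-pairings straight across the straddling boundary, pinning down the exact power of $(-1)$ emitted by each telescoping chain, and verifying exhaustively that no further rotation survives the two $\pi$-projections. The decisive point is a parity argument: every $C$-pair either sits inside one $\omegaz$, spans two adjacent $\omegaz$'s, or spans an $\omegaz$ and a neighbouring $u$, and this trichotomy dictates both survival and sign. Carrying it out cleanly for both families while tracking the signs is where care is needed.
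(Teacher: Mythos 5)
Your plan is correct and is precisely the ``straightforward computation'' that the paper invokes without writing out: expand $|u_1\cdots u_m\omegaz^l|$ into its $m+2l$ rotations, observe that the two $\pi$'s kill every rotation whose kept region begins or ends with an unbroken $\omegaz$ (all even $u$-shifts with $s\ge 2$ and all $\omegaz$-shifts except those keeping an odd number $k\le m-1$ of $\omegaz$-slots), and telescope the remaining offset contractions via \eqref{eq:CllC} to get the two families of terms in $\Phi(u)$ with sign $(-1)^{l-(k-1)/2}=(-1)^l(-1)^{(k-1)/2}$. I verified the surviving rotations, the ${\rm Cont}$ factors, the powers of $\omegaz$, and the signs against the stated formula, and they all check out.
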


\begin{lem}
\label{lem:Ceven}
Let $m$ be an even integer $\ge 4$ and $l\ge m/2$.
For any $u_1,u_2,\ldots,u_m \in V$, we have
\begin{align*}
& (\pi \otimes {1_V}^{\otimes m-4} \otimes \pi) ({1_V}^{\otimes m} \otimes C^{\otimes l}) |u_1u_2\cdots u_m \omegaz^l | \\
=& (\pi \otimes {1_V}^{\otimes m-4} \otimes \pi)
( (2g)^l u_1u_2\cdots u_m + \left( l-\frac{m}{2} \right) (-1)^l
\Phi_1(u)+ (-1)^l \Phi_2(u) ),
\end{align*}
where
\begin{align*}
& \Phi_1(u) = (-1)^{\frac{m}{2}-1}
{\rm Cont}(u_2\cdots u_{m-1})\, u_m \omegaz^{\frac{m}{2}-1} u_1, \\
& \Phi_2(u)
= \sum_{\substack{1\le k\le m-1 \\ k\, {\rm odd}}} (-1)^{\frac{1}{2}(k-1)}
\left( {\rm Cont}(u_{m-k+1}\cdots u_{m-1})\, u_m \omegaz^{\frac{k-1}{2}} u_1u_2\cdots u_{m-k} \right. \\
& \hspace{15em} \left. + {\rm Cont}(u_2\cdots u_k)\, u_{k+1}\cdots u_m \omegaz^{\frac{k-1}{2}} u_1 \right).
\end{align*}
\end{lem}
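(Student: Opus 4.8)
The final statement to prove is Lemma \ref{lem:Ceven}: a completely explicit contraction identity computing the effect of the operator $(\pi \otimes {1_V}^{\otimes m-4} \otimes \pi)({1_V}^{\otimes m} \otimes C^{\otimes l})$ applied to the cyclic word $|u_1\cdots u_m \omegaz^l|$, for even $m \ge 4$.

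\textbf{Overall approach.} The plan is to expand the cyclic word $|u_1\cdots u_m\omegaz^l|$ as the explicit sum of its $2(m+l)$ rotations (via the embedding $|\widehat{T}|\hookrightarrow\widehat{T}$ from the start of this section) and then apply the operator term by term, tracking exactly which rotations survive. Writing $a = u_1\cdots u_m$, each cyclic rotation has the shape $w_1 \omegaz^j w_2 \omegaz^{l-j}$ where $w_1 w_2$ is a cyclic rotation of $a$ that splits the block of $\omegaz$'s into two runs. The key computational engine is the basic contraction rule \eqref{eq:CllC}, namely $(1_V\otimes C^{\otimes l})(\omegaz^l x)=(-1)^l x$ and its left-handed analogue, which lets $C^{\otimes l}$ absorb a full run of $l$ symplectic forms adjacent to a single $V$-factor. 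I would organize the rotations into three families according to how the $\omegaz$-block is positioned relative to the word $a$.

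\textbf{Key steps in order.} First I would fix notation for the rotations: the cyclic word is $\sum_{\text{rot}} \nu^k(u_1\cdots u_m\omegaz^l)$, and I would group these by whether an interior $\omegaz$-factor sits between two letters of $a$ (these contribute terms like $\cdots u_i \omegaz^j u_{i+1}\cdots$), versus the rotations where the $\omegaz^l$ block remains contiguous. The crucial observation is that the outer projectors $\pi\otimes 1^{\otimes m-4}\otimes\pi$ annihilate any tensor whose first or last two-letter slot is a multiple of $\omegaz$ (since $\pi(\omegaz)=0$); this kills most rotations in which an $\omegaz$ has migrated to an end. Second, for the rotations that survive, I would apply $1_V^{\otimes m}\otimes C^{\otimes l}$, using \eqref{eq:CllC} repeatedly to contract the trailing $\omegaz$'s, and also the scalar $C(\omegaz)=2g$ whenever a full $\omegaz^2=\omegaz\cdot\omegaz$ pair is contracted internally, yielding factors of $(2g)$. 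Third, I would identify the distinguished leading term: the rotation keeping $a$ intact with all $l$ copies of $\omegaz$ to its right contracts to $(2g)^l u_1\cdots u_m$. The remaining surviving rotations produce the $\mathrm{Cont}(\cdots)$ terms through the mechanism that contracting $C$ across a pair of internal consecutive letters $u_i, u_{i+1}$ produces the scalar $C(u_i,u_{i+1})$, which is precisely how $\mathrm{Cont}$ arises.

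\textbf{The main obstacle and the $l$-dependence.} The delicate point, and the essential difference between the even case (Lemma \ref{lem:Ceven}) and the odd case (Lemma \ref{lem:Codd}), is the appearance of the multiplicity factor $(l-\tfrac{m}{2})$ multiplying $\Phi_1(u)$. In the even case there is a single ``balanced'' configuration in which the leftover word is a palindrome-type contraction $\mathrm{Cont}(u_2\cdots u_{m-1})\,u_m\omegaz^{m/2-1}u_1$ that can be produced by a whole one-parameter family of rotations (indexed by how the $\omegaz$-block is split), and summing their equal contributions yields the linear-in-$l$ coefficient $(l-\tfrac{m}{2})$. Correctly counting this family — verifying that exactly $l-\tfrac m2$ distinct rotations map to $\Phi_1(u)$ after projection, while the sporadic boundary rotations contribute the $l$-independent piece $\Phi_2(u)$ — is where the bookkeeping is most error-prone and must be done with care about signs $(-1)^j$ coming from \eqref{eq:CllC}. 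Once the surviving rotations are enumerated and each is contracted, collecting the $(2g)^l$ term, the $(l-\tfrac m2)(-1)^l\Phi_1$ term, and the $(-1)^l\Phi_2$ term, and then applying the outer projector $\pi\otimes 1^{\otimes m-4}\otimes\pi$ to the whole expression, gives exactly the asserted formula. I expect the sign tracking and the multiplicity count for $\Phi_1$ to be the only genuinely subtle steps; everything else is a direct, if lengthy, application of \eqref{eq:CllC}.
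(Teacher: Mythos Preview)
Your approach is correct and matches the paper's: the paper simply states that Lemmas \ref{lem:Codd} and \ref{lem:Ceven} ``can be proved by straightforward computations'' and gives no further detail, and what you outline is exactly that computation. One small slip: the number of cyclic rotations of $u_1\cdots u_m\omegaz^l$ is $m+2l$, not $2(m+l)$, but this does not affect your argument.
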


\begin{proof}[Proof of Proposition \ref{prop:QHQ}]
First assume that $m$ is odd and $\ge 5$.
We apply Lemma \ref{lem:Codd} to $u=a+b\omegaz$.
Since $a\in Q \otimes V^{\otimes m-4} \otimes Q$,
$(\pi \otimes {1_V}^{\otimes m-4} \otimes \pi)(a)=a$.
Since $\pi(\omegaz)=0$,
$(\pi \otimes {1_V}^{\otimes m-4} \otimes \pi)(b\omega_0)=0$.
Hence
\[
0= (2g)^l a + (-1)^l (\pi \otimes {1_V}^{\otimes m-4} \otimes \pi) \Phi(a+b\omegaz)
\]
for any $l\gg 0$.
Therefore, $a=0$.

Next, assume that $m$ is even and $\ge 4$.
We apply Lemma \ref{lem:Ceven} to $u=a+b\omegaz$.
Then, we obtain
\[
0= (2g)^l a + \left( l-\frac{m}{2} \right) (-1)^l \Phi_1(a+b\omegaz) + (-1)^l \Phi_2(a+b\omegaz)
\]
for any $l\ge p$.
We can find sufficiently large integers $l_1,l_2,l_3$ such that
\[
\det \begin{pmatrix}
(2g)^{l_1} & (l_1-\frac{m}{2})(-1)^{l_1} & (-1)^{l_1} \\
(2g)^{l_2} & (l_2-\frac{m}{2})(-1)^{l_2} & (-1)^{l_2} \\
(2g)^{l_3} & (l_3-\frac{m}{2})(-1)^{l_3} & (-1)^{l_3} \\
\end{pmatrix}
\neq 0.
\]
Therefore, we can conclude that $a=0$.
\end{proof}

\end{document}